\numberwithin{equation}{section}
\newtheorem{thm}{Theorem}[section]
\newtheorem{lem}[thm]{Lemma}
\newtheorem{cor}[thm]{Corollary}
\newtheorem{prop}[thm]{Proposition}
\theoremstyle{remark}
\newtheorem{ack}{Acknowledgments\!\!}
\theoremstyle{definition}
\newtheorem{dfn}[thm]{Definition}
\newtheorem{eg}[thm]{Example}  
\newtheorem{rem}[thm]{Remark}
\newtheorem{def/prop}[thm]{Definition/Proposition}
\newcommand{\pmx}[1]{\begin{pmatrix}#1\end{pmatrix}}
\newcommand{\spmx}[1]{{\small \pmx{#1}}}
\numberwithin{equation}{section}
\newcommand*\linenomathpatch[1]{%
  \cspreto{#1}{\linenomath}%
  \cspreto{#1*}{\linenomath}%
  \csappto{end#1}{\endlinenomath}%
  \csappto{end#1*}{\endlinenomath}%
}
\newcommand*\linenomathpatchAMS[1]{%
  \cspreto{#1}{\linenomathAMS}%
  \cspreto{#1*}{\linenomathAMS}%
  \csappto{end#1}{\endlinenomath}%
  \csappto{end#1*}{\endlinenomath}%
}
 \let\linenomathAMS\linenomathWithnumbers
\patchcmd\linenomathAMS{\advance\postdisplaypenalty\linenopenalty}{}{}{}
  \let\linenomathAMS\linenomathNonumbers
\patchcmd{\mmeasure@}{\measuring@true}{
  \measuring@true
  \ifnum-\linenopenaltypar>\interdisplaylinepenalty
    \advance\interdisplaylinepenalty-\linenopenalty
  \fi
  }{}{}
\newcommand{\mf}[1]{{\mathfrak{#1}}}
\newcommand{\bb}[1]{{\mathbb{#1}}}
\newcommand{\mca}[1]{{\mathcal{#1}}}
\newcommand{\inj}{\hookrightarrow}
\newcommand{\surj}{\twoheadrightarrow}
\newcommand{\act}{\curvearrowright}
\newcommand{\congto}{\overset{\cong}{\to}}
\newcommand{\N}{\bb{N}}
\newcommand{\Z}{\bb{Z}}
\newcommand{\Q}{\bb{Q}}
\newcommand{\C}{\bb{C}}
\newcommand{\F}{\bb{F}}
\newcommand{\ccdot}{\!\cdot\!}
\newcommand{\ol}{\overline}
\newcommand{\ul}{\underline}
\newcommand{\ds}{\displaystyle}
\newcommand{\wh}[1]{{\widehat{#1}}}
\renewcommand{\mod}{\ {\rm mod}\ }
\subjclass[2020]{Primary 11R58, 11B37, 11R45; Secondary 11A07, 11B39, 11J93}
\keywords{finite algebraic number, linear recurrent sequence, Frobenius automorphisms, Chebotarev density theorem, positive characteristic, function field}  
\title
[Finite algebraic numbers of positive characteristics]
{{\Large Positive characteristic analogues of finite algebraic numbers}}
\author{
Daichi Matsuzuki} 
\email{daichi\_matsuzuki@tamu.edu} 
\address{
Department of Mathematics, Texas A\&M University;
155 Ireland Street, College Station, 77843-3368, TX, USA}
\author{
Honami Sakamoto} 
\email{sakamo10ho73@gmail.com} 
\address{
Graduate School of Humanities and Sciences, Division of Advanced Sciences, 
Department of Mathematics, Ochanomizu University; 2-1-1 Otsuka, Bunkyo-ku, 112-8610, Tokyo, Japan} 
\author{
Jun Ueki
} 
\email{uekijun46@gmail.com}
\address{
Department of Mathematics, Ochanomizu University; 2-1-1 Otsuka, Bunkyo-ku, 112-8610, Tokyo, Japan}
\begin{document}

\begin{abstract} 
J.~Rosen introduced the ring $\mca{P}^0_{\mca{A}}$ of so-called finite algebraic numbers, which may be seen as an analogue of certain periods in the ring $\mca{A}=\prod_p \Z/p\Z /\bigoplus_p \Z/p\Z$, $p$ running through all prime numbers. 
In this article, we introduce its positive characteristic analogue $\mca{P}^0_{\mca{A}_K}$ over the rational function field $K=\F_q(\theta)$, $q$ being a prime power, 
study foundational properties, and provide further scopes. 
\end{abstract}

\maketitle  

{\small 
\tableofcontents
} 

\section{Introduction} 
\subsection{Background}
The ring $\mca{A}=(\prod_p \Z/p\Z)/(\bigoplus_p \Z/p\Z)$, $p$ running through the set of all prime numbers, was introduced by Ax \cite{Ax1968AnnMath} and Kontsevich \cite{Kontsevich2009JJM}, and has played a significant role as the natural habitat for finite multiple zeta values (MZVs) that were initially proposed by Kaneko--Zagier (cf.\,\cite{Kaneko2019PMB, KanekoZagier2026}). 
From the viewpoint of periods of motives, J.~Rosen \cite{Rosen2020JNT} introduced a \emph{finite} analogue $\mca{P}^0_\mca{A}$ of the ring of algebraic numbers within the ring $\mca{A}$, 
and characterized it in terms of linear recurrent sequences.
He also provided notable applications, including an analogue of the Skolem--Mahler--Lech theorem and results concerning the densities of primes (see \Cref{ss.PA0}). 

In the subsequent work, Rosen--Takeyama--Tasaka--Yamamoto \cite{RosenTakeyamaTasakaYamamoto2024JNT} 
discussed a method for encoding the decomposition law of primes into elements of $\mca{P}^0_\mca{A}$, and Hori--Kida \cite{HoriKida2025} further pursued this direction. 
Anzawa--Funakura \cite{AnzawaFunakura2024}, Luca--Zudilin \cite{LucaZudilin2025Ramanujan, LucaZudilin2025-arXiv}, 
Kaneko--Matsusaka--Seki \cite{KanekoMatsusakaSeki2025IMRN}, 
Matsusaka--Miyazaki--Yara \cite{MatsusakaMiyazakiYara2026IJNT}, 
Matsusaka--Seki \cite{MatsusakaSeki2026-arXiv-NaiveTrans}, and Mihara \cite{Mihara2026-arXiv-APNSA}
have conducted stimulating investigations on ``transcendental'' elements in $\mca{A}$, leading the world of numbers to a more profound stratum (see \Cref{eg.trans}). 
Furthermore, there have been studies aiming at bridging the natures of finite and classical MZVs, such as recent noteworthy work by Maesaka--Seki--Watanabe \cite{MaesakaSekiWatanabe2024-arXiv}. 

Now let $K$ be a rational function field in one variable over a finite field. 
The notion of multiple zeta values in positive characteristic was initially introduced by Thakur \cite{Thakur2004}, and intriguing research has flourished in recent years (cf.\,\cite{ChangChenMishiba2022Camb, ChangChenMishiba2023Pi, ImKimLeNgoDacPham2024}). 
The positive characteristic analogue $\mca{A}_K$ of the ring $\mca{A}$, 
introduced by Chang--Mishiba \cite{ChangMishiba2017Bordeaux}, 
serves as the natural habitat for certain finite analogues of multiple zeta values in positive characteristic with recent development (cf.\,\cite{Harada2022JNT, Matsuzuki2022JIS}). 
Accordingly, in view of the venerable recipe of the analogy between number fields and function fields, developments of positive characteristic analogues $\mca{P}^0_{\mca{A}_K}$ of Rosen's ring $\mca{P}^0_{\mca{A}}$, that we will present in this paper (see \Cref{ss.P0AK}), would offer the prospect of a similarly profound research area. 
See \Cref{sec.remarks} for further perspectives. 

\subsection{Rosen's ring $\mca{P}^0_{\mca{A}}$ of finite algebraic numbers}
\label{ss.PA0} 

Consider the ring 
\[\mca{A}=\frac{\prod_p \Z/p\Z}{\bigoplus_p \Z/p\Z},\]
where $p$ runs through the set of all rational primes. 
Let $(a_p)_p,(b_p)_p\in \prod_p \Z/p\Z$ represent $[(a_p)_p], [(b_p)_p]\in \mca{A}$. 
Then we have $[(a_p)_p]=[(b_p)_p]$ iff $a_p=b_p$ holds for every sufficiently large $p$. 
Via the diagonal embedding $\Q\inj \mca{A}$, we often assume $\Q\subset \mca{A}$.

\begin{thm}[({Rosen \cite[Theorem 1.1]{Rosen2020JNT}})] \label{thm.def.A}
Let $\alpha\in \mca{A}$. The following conditions are equivalent. 

{\rm (1)} There is a linear recurrent sequence $(a_n)_n$ over $\Q$, namely, $(a_n)_n\in \Q^\N$ satisfying a linear recurrence relation over $\Q$, such that $\alpha=[(a_p\mod p)_p]$. 

{\rm (2)} There exist a finite Galois extension $L/\Q$ and a map $g:{\rm Gal}(L/\Q)\to L$ satisfying ``$g(\sigma\tau\sigma^{-1})=\sigma(g(\tau))$ for every $\sigma,\tau\in {\rm Gal}(L/\Q)$'' such that $\alpha=[(g(\varphi_p)\mod p)_p]$, where $\varphi_p \in {\rm Gal}(L/\Q)$ denotes ``the Frobenius at $p$'' {\rm (see \Cref{rem.Frob.A})}. 

{\rm (3)} There exists a finite Galois extension $L/\Q$ such that, for an arbitrarily chosen basis of $L$ over $\Q$, $\alpha$ is a $\Q$-linear combination of the matrix coefficients of ``the $\mca{A}$-valued Frobenius automorphism'' $F_\mca{A}:L\otimes \mca{A}\to L\otimes \mca{A}$. 
\end{thm}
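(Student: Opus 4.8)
The plan is to establish the cyclic chain of implications $(1)\Rightarrow(2)\Rightarrow(3)\Rightarrow(1)$, exploiting that the $\mca{A}$-valued Frobenius is essentially the diagonal (pointwise) avatar of the ordinary Frobenius conjugacy classes $\varphi_p$. For $(2)\Rightarrow(3)$, given $g\colon \mathrm{Gal}(L/\Q)\to L$ with the conjugacy-equivariance $g(\sigma\tau\sigma^{-1})=\sigma(g(\tau))$, I would fix a $\Q$-basis $e_1,\dots,e_d$ of $L$ and expand $g(\tau)=\sum_i c_i(\tau)e_i$ with $c_i(\tau)\in\Q$; the equivariance property translates into the statement that the scalar-valued functions $c_i$ are precisely the matrix coefficients (in this basis) of the representation $\mathrm{Gal}(L/\Q)\to \mathrm{GL}(L)$ sending $\sigma$ to the $\Q$-linear automorphism $x\mapsto\sigma(x)$, up to a fixed change-of-basis matrix. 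Then, since $F_\mca{A}$ acts on $L\otimes\mca{A}$ componentwise, at the $p$-th component $F_\mca{A}$ is conjugate to the automorphism $x\mapsto\varphi_p(x)$ of $L$; hence its matrix coefficients at $p$ reduce mod $p$ to those of $\varphi_p$, and the $\Q$-linear combination recovering $g(\varphi_p)\bmod p$ is exactly the one dictated by writing $g$ in the chosen basis. The equivalence $\alpha=[(g(\varphi_p)\bmod p)_p]$ is well-posed because $g(\varphi_p)$ depends only on the conjugacy class of $\varphi_p$, which is what the equivariance guarantees.

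For $(3)\Rightarrow(1)$: a matrix coefficient of $F_\mca{A}$ at the $p$-th place is a matrix coefficient of $\varphi_p$ acting on $L$ in a fixed basis, hence a value of a fixed $\Q$-valued class function evaluated at $\varphi_p$. For such functions, the fact that they assemble into linear recurrent sequences in $p$ is the heart of the matter, and this is where I expect to lean on Rosen's machinery (or reprove it): one enlarges to the regular representation, uses that Frobenius elements run through the Galois group with the densities prescribed by Chebotarev, and then for each conjugacy class $C$ one builds an explicit linear recurrent sequence whose $p$-th term is $\mathbf{1}[\varphi_p\in C]$ — e.g.\ via Dirichlet-type sequences attached to the splitting behaviour of primes, or equivalently via traces of powers of a permutation matrix acting on cosets, the key point being that a finite-state deterministic function of $p\bmod N$ (for suitable modulus) is linear recurrent. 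Taking $\Q$-linear combinations of these stays within linear recurrent sequences, giving $(1)$.

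For $(1)\Rightarrow(2)$: start from a linear recurrent sequence $(a_n)_n$ over $\Q$. By the standard theory of linear recurrences, after passing to a splitting field there is a closed-form expression $a_n=\sum_j P_j(n)\lambda_j^n$ with $\lambda_j$ algebraic and $P_j$ polynomials, but working mod $p$ one can do better: for all but finitely many $p$, the sequence $(a_n\bmod p)_n$ is purely periodic with period dividing some fixed integer related to the orders of the $\lambda_j$ modulo $p$. The plan is to exhibit a single finite Galois extension $L/\Q$ (containing the $\lambda_j$ and enough roots of unity) and a function $g$ on $\mathrm{Gal}(L/\Q)$ such that $a_p\equiv g(\varphi_p)\pmod p$: concretely, $\varphi_p$ determines the residues of the $\lambda_j$ and the relevant roots of unity modulo $p$, so one sets $g(\sigma)$ to be the value obtained by substituting $\sigma$-twisted data into the closed form, and checks the conjugacy-equivariance from the Galois-equivariance of the construction. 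I expect this direction to be the main obstacle: one must handle the exceptional primes (bad reduction, ramification, primes dividing denominators or the discriminant), verify that "$a_p\bmod p$" genuinely only sees $\varphi_p$ up to conjugacy, and show the closed form can be chosen so that $g$ lands in $L$ rather than an extension — so the careful bookkeeping of which finite set of primes to discard, and the reduction of $a_n\bmod p$ to a function of $p\bmod(\text{order of the }\lambda_j)$, is the technical crux. Throughout, \Cref{thm.def.A} itself may be quoted, so in practice the function-field paper would cite Rosen for this classical case and concentrate the new work on the $K=\F_q(\theta)$ analogue.
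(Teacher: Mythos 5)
Your overall cyclic strategy is reasonable, and your sketch of (1)$\Rightarrow$(2) is in the right spirit (it is exactly the ``$\lambda^p\equiv\varphi_p(\lambda)\bmod\mf{p}$'' mechanism that Rosen, and this paper in the function-field case, exploit). But the step (3)$\Rightarrow$(1) as you propose it contains a genuine error. You assert that a matrix coefficient of $F_{\mca{A}}$ at the place $p$ is ``a value of a fixed $\Q$-valued class function evaluated at $\varphi_p$,'' and that such data can be realized by linear recurrences because an indicator $\mathbf{1}[\varphi_p\in C]$ is ``a finite-state deterministic function of $p\bmod N$.'' Both claims fail outside the abelian case. For non-abelian $L/\Q$ the Frobenius conjugacy class is \emph{not} determined by congruence conditions on $p$ (that is Kronecker--Weber territory only), so no modulus $N$ exists; and the matrix coefficients of $F_{\mca{A}}$ are not reductions of $\Q$-valued class functions at all --- their $\Q$-span is the image of the Frobenius-evaluation map, i.e.\ classes $[(g(\varphi_p)\bmod p)_p]$ for $L$-\emph{valued} conjugation-equivariant $g$ (for abelian $\Gamma$ equivariance forces $g$ to be $\Q$-valued, which is why your picture looks plausible there, but for $\Gamma$ non-abelian $g(\tau)$ only lies in $L^{\langle\tau\rangle}$ and the residue depends on $p$ beyond the class of $\varphi_p$). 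In effect your (3)$\Rightarrow$(1) assumes away the hardest content of the theorem: producing a single rational linear recurrence $(a_n)_n$ with $a_p\equiv g(\varphi_p)\bmod p$ requires the $p$-power Frobenius trick applied to an equivariant decomposition, not eventual periodicity in $p$.

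For comparison, the paper (which cites Rosen for this statement and proves the analogue \Cref{thm.def.AK} by the same method) routes everything through ${\rm ev}_L$: for (2)$\Leftrightarrow$(1) it identifies $A(L)$ with $(L\otimes L)^\Gamma$ via the Bourbaki isomorphism $L\otimes L\cong{\rm Map}(\Gamma,L)$ (\Cref{lem.GinvAL}), uses the normal basis theorem (\Cref{lem.Ginv-perm}) and a Hilbert 90 argument (\Cref{lem.diag-perm}) to write the relevant element as $\sum_i b_i\otimes\lambda_i$ with $\Gamma$ permuting the pairs $(b_i,\lambda_i)$, and then sets $a_n=\sum_i b_i\lambda_i^n$ (companion-matrix/Jordan--Chevalley bookkeeping in the other direction); for (2)$\Leftrightarrow$(3) it proves that the $K$-span of the matrix coefficients of the $\mca{A}$-valued Frobenius equals ${\rm Im}\,{\rm ev}_L$ using the trace-form identification $L\cong L^\vee$ (\Cref{lem.Kspan}). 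Your (2)$\Rightarrow$(3) paragraph gestures at this but is imprecise (the component of $F_{\mca{A}}$ at $p$ preserves each factor $O_L/\mf{p}$, whereas the reduction of a global $\varphi_{\mf{p}}$ permutes the primes above $p$, so ``conjugate to $x\mapsto\varphi_p(x)$'' needs the trace-form argument to be made honest), and your (1)$\Rightarrow$(2) sketch still owes the verification that the coefficients $b_j$ are permuted by $\Gamma$ compatibly with the $\lambda_j$, which is precisely what the normal-basis/Hilbert 90 lemmas supply. The decisive gap to repair, however, is (3)$\Rightarrow$(1).
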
 

\begin{rem} \label{rem.Frob.A} 
The element $\alpha=[(g(\varphi_p)\mod p)_p]$ in (2) is first defined by choosing a Frobenius element at a prime of $L$ over each $(p)$, but eventually turns out to depend only on $g$ (cf.~{\cite[Section 4]{Rosen2017-arXiv.choice-free}}). 
Namely, let $A(L)$ denote the set of all $g$'s with the condition in (2). Then there is a well-defined map   
${\rm ev}_L:A(L)\to \mca{A}_L;$ $g\mapsto [(g(\varphi_p)\mod p)_p]$
called \emph{the Frobenius-evaluation map}. 

See also Remarks \ref{rem.def(2)} and \ref{rem.AT} for an interpretation of the condition (2). 
\end{rem} 

\begin{dfn}[({\cite[Definition 1.2]{Rosen2020JNT}}+)] 
An element $\alpha\in \mca{A}$ is said to be a \emph{finite algebraic number} if $\alpha$ satisfies the equivalent three conditions in \Cref{thm.def.A}. 
The set of all finite algebraic numbers is denoted by $\mca{P}^0_{\mca{A}}$. 

In addition, the set of all $\alpha\in \mca{A}$ that is algebraic over $\Q$ is denoted by $\mca{C}_\mca{A}$. 
\end{dfn} 

We remark that the name ``finite algebraic number'' is due to subsequent studies of \cite{Rosen2020JNT}. 

\begin{thm}[(Rosen \cite{Rosen2020JNT}, Rosen--Takeyama--Tasaka--Yamamoto 
\cite{RosenTakeyamaTasakaYamamoto2024JNT}, Anzawa--Funakura \cite{AnzawaFunakura2024})] \label{thm.incl} \ 

{\rm (1)} The set $\mca{P}^0_\mca{A}$ is a $\Q$-subalgebra of $\mca{A}$. 

{\rm (2)} There are proper inclusions 
\[\Q\subsetneq \mca{P}^0_{\mca{A}}\subsetneq \mca{C}_{\mca{A}} \subsetneq \mca{A}.\]
\end{thm}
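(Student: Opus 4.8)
The plan is to establish the four assertions separately, starting from the characterizations in \Cref{thm.def.A}, which make parts (1) and the inclusion $\mca{P}^0_\mca{A}\subseteq\mca{C}_\mca{A}$ essentially formal. For the ring structure in (1): given $\alpha=[(a_p\bmod p)_p]$ and $\beta=[(b_p\bmod p)_p]$ with $(a_n)_n$, $(b_n)_n$ linear recurrent over $\Q$, I would note that $(a_n+b_n)_n$ and $(a_nb_n)_n$ are again linear recurrent over $\Q$ (the characteristic roots of the sum lie among those of the two summands, and those of the product among the pairwise products); since the diagonal embedding $\Q\inj\mca{A}$ clearly lands in $\mca{P}^0_\mca{A}$ via constant sequences, this gives a $\Q$-subalgebra. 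The inclusion $\mca{P}^0_\mca{A}\subseteq\mca{C}_\mca{A}$ follows from characterization (2): if $\alpha=[(g(\varphi_p)\bmod p)_p]$ with $g\colon\mathrm{Gal}(L/\Q)\to L$, then each value $g(\sigma)$ satisfies its minimal polynomial $h_\sigma(X)\in\Q[X]$ over $\Q$; taking $h(X)=\prod_{\sigma}h_\sigma(X)$, we get $h(g(\varphi_p))\equiv 0\pmod p$ for all $p$, so $h(\alpha)=0$ in $\mca{A}$, and $h\not\equiv 0$, so $\alpha$ is algebraic over $\Q$.

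For the two properness claims involving $\mca{P}^0_\mca{A}$, the point is to exhibit explicit separating elements. For $\Q\subsetneq\mca{P}^0_\mca{A}$: take a nontrivial quadratic field $\Q(\sqrt d)$ and the sequence detecting whether $d$ is a square mod $p$, i.e. a linear recurrent sequence (or a Frobenius-evaluation with $L=\Q(\sqrt d)$, $g$ nonconstant on the two-element Galois group) whose residues alternate according to the splitting of $p$; by quadratic reciprocity the resulting element of $\mca{A}$ takes at least two distinct values on a positive-density set of primes, hence is not in $\Q$. For $\mca{P}^0_\mca{A}\subsetneq\mca{C}_\mca{A}$: I would produce an element of $\mca{A}$ that is algebraic over $\Q$ but \emph{not} given by any Frobenius-evaluation. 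The natural candidate is something like $[(\sqrt{-1}\bmod p)_p]$ where for each prime $p\equiv 1\pmod 4$ one \emph{chooses} a square root of $-1$ mod $p$ in an ``unstructured'' way (e.g. the smallest positive one): this squares to $-1$ in $\mca{A}$, so it is algebraic, but its residue at $p$ cannot be read off from the Frobenius conjugacy class in any fixed finite Galois extension — one shows that if it were equal to $\mathrm{ev}_L(g)$ for some $L$ and $g$, then on the primes splitting completely in $L$ the value $g(1)\in L$ would be a fixed rational square root of $-1$, a contradiction. Making the ``unstructured choice'' argument rigorous is the delicate point; Anzawa--Funakura \cite{AnzawaFunakura2024} carry this out, and I would follow their construction.

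For the last inclusion $\mca{C}_\mca{A}\subsetneq\mca{A}$, I would invoke a transcendence/counting argument: $\mca{C}_\mca{A}$ is contained in the integral closure of $\Q$ in $\mca{A}$, and in particular every element of $\mca{C}_\mca{A}$ satisfies a fixed monic polynomial over $\Q$, which constrains the sequence of residues $(a_p)_p$ to lie, for each $p$, in the zero set of that polynomial mod $p$ — a set of bounded size. A diagonalization or cardinality argument then produces a sequence $(a_p)_p$ (e.g. with $a_p$ growing, or a generic choice) whose class in $\mca{A}$ cannot satisfy any single nonzero polynomial over $\Q$; see \cite{Rosen2020JNT} and the ``transcendental element'' examples referenced in \Cref{eg.trans}. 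The main obstacle overall is the middle properness $\mca{P}^0_\mca{A}\subsetneq\mca{C}_\mca{A}$: it requires genuinely separating ``algebraic'' from ``Frobenius-theoretic'', and the cleanest route is to cite the Anzawa--Funakura construction rather than reprove it here, since the other three parts are comparatively routine given \Cref{thm.def.A}.
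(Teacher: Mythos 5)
Your overall architecture matches the paper's: part (1) and the inclusion $\mca{P}^0_\mca{A}\subseteq\mca{C}_\mca{A}$ are read off from \Cref{thm.def.A} (the sum/product-of-recurrences argument for (1), and the product-of-minimal-polynomials argument for the inclusion, are exactly the standard ones; the latter is what the paper runs in the function-field analogue \Cref{lem.P0Csep}), and your quadratic-symbol witness for $\Q\subsetneq\mca{P}^0_\mca{A}$ is the paper's Fibonacci example ($d=5$) in mild disguise. The genuine divergence is at $\mca{P}^0_\mca{A}\subsetneq\mca{C}_\mca{A}$: the paper (following Rosen) gets this by pure counting --- $\mca{P}^0_\mca{A}$ is countable because rational linear recurrent sequences are, while $\mca{C}_\mca{A}$ is uncountable via the $0/1$-valued elements satisfying $x^2-x$, cf.\ \Cref{lem.P0Csep} --- whereas you build an explicit witness from square roots of $-1$ modulo $p\equiv 1\pmod 4$. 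Your route buys an explicit separating element (which the counting argument does not provide) at the cost of more work, and your sketch needs less delicacy than you fear: conjugation-equivariance forces $g(1)\in\Q$, the primes splitting completely in $L(i)$ are $\equiv 1\pmod 4$ and infinite in number by Chebotarev, and a fixed rational cannot square to $-1$ modulo infinitely many primes. This works for \emph{any} choice of the square roots, so no ``unstructured choice'' analysis is required; also note that Anzawa--Funakura are cited in the paper only for the last inclusion $\mca{C}_\mca{A}\subsetneq\mca{A}$, not for this middle properness.

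Two slips to repair. First, your element does not square to $-1$ in $\mca{A}$: at the infinitely many $p\equiv 3\pmod 4$ there is no square root of $-1$, so you should set $a_p=0$ there and certify algebraicity with $x^3+x$ instead of $x^2+1$. Second, for $\mca{C}_\mca{A}\subsetneq\mca{A}$ a cardinality argument cannot succeed: $\mca{C}_\mca{A}$ already has cardinality $2^{\aleph_0}$ (the same $0/1$-valued elements), and elements of $\mca{C}_\mca{A}$ do not all satisfy one fixed polynomial --- each has its own. What does work is precisely your diagonalization (enumerate the nonzero $f\in\Q[x]$, reserve an infinite set of primes for each, and at large $p$ in that set choose $a_p$ avoiding the at most $\deg f<p$ roots of $f$ mod $p$), or the paper's route via the Anzawa--Funakura criterion (\Cref{prop.trans.A}) applied to the sequence $(1,1,2,1,2,3,\ldots)$ of \Cref{eg.trans}; ``$a_p$ growing'' alone is not sufficient (e.g.\ $a_p=p-1$ grows but represents $-1$).
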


\begin{rem}
\Cref{thm.incl} is proved as follows: 
\renewcommand{\labelitemi}{$\bullet$}
\begin{itemize}
\item The assertion (1) is asserted in \cite{Rosen2020JNT} and explicitly proved in \cite[Proposition 2.7]{RosenTakeyamaTasakaYamamoto2024JNT}. 

\item The proper inclusion $\Q\subsetneq \mca{P}^0_\mca{A}$ follows from \cite[Example 1.5]{Rosen2020JNT} (see \Cref{eg.Fn}) and Dirichlet's density theorem. 

\item The inclusion $\mca{P}^0_\mca{A} \subset \mca{C}_\mca{A}$ is due to the former half of \cite[Theorem 1.4]{Rosen2020JNT}. 

\item The proper inclusion $\mca{P}^0_\mca{A} \subsetneq \mca{C}_\mca{A}$ follows from the facts that $\mca{P}^0_\mca{A}$ is a countable set and $\mca{C}_\mca{A}$ is an uncountable set, as stated in \cite{Rosen2020JNT} and is easily verified (cf. \Cref{lem.P0Csep}). 

\item The proper inclusion $\mca{C}_\mca{A} \subsetneq  \mca{A}$ is proved by \cite[Proposition 3.27, Example 3.8]{AnzawaFunakura2024}.  
\end{itemize} 
See also \Cref{ss.transA} for more about ``transcendental'' elements in several senses. 
\end{rem} 

\begin{eg}[{(Rosen \cite[Example 1.5]{Rosen2020JNT})}] \label{eg.Fn} 
Let $(F_n)_n$ denote the Fibonacci sequence defined by $F_1=F_2=1$ and $F_{n+2}=F_n+F_{n+1}$. Then the Legendre symbol satisfies $F_p\equiv \left(\frac{\,5\,}{p}\right)\mod p$. The Dirichlet density theorem assures that the element $\alpha=[(F_p\mod p)_p]\in \mca{P}^0_{\mca{A}}$ is not in the image of $\Q$ in $\mca{A}$. 
We find that $f(x)=x^2-1$ satisfies $f(\alpha)=0$, so $\alpha\in \mca{C}_\mca{A}$. 
\end{eg}

The classical Skolem--Mahler--Lech theorem asserts the following. 

\begin{thm}[(Skolem \cite{Skolem1934EGDG}, Mahler \cite{Mahler1935ETKF}, Lech \cite{Lech1953AM})]
Let $k$ be a field of characteristic zero and 
let $(a_n)_n$ be a linear recurrent sequence over $k$. 
Then the zero set $\{n\mid a_n=0\}$ is periodic for $n\gg0$, 
that is, this set coincides with the union of finitely many arithmetic progressions 
and a finite set. 
\end{thm}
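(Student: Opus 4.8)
The plan is to follow the classical $p$-adic strategy of Skolem, Mahler, and Lech: express the sequence as a matrix power, transport the problem to a non-archimedean completion, interpolate the matrix powers along each residue class by a single $p$-adic analytic function, and then invoke the finiteness of zeros of such functions.

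First I would put $(a_n)_n$ into linear-algebraic normal form. After possibly replacing it by a shift and lowering the order of the recurrence, I may assume $a_{n+d}=c_{d-1}a_{n+d-1}+\cdots+c_0 a_n$ with $c_0\neq 0$, so that $a_n=u^{\mathsf T}A^n v$ for the companion matrix $A\in\mathrm{GL}_d(k)$ of this relation (which has $\det A=\pm c_0\neq 0$), $u=(1,0,\dots,0)^{\mathsf T}$, and $v=(a_0,\dots,a_{d-1})^{\mathsf T}$. The finitely many elements $c_0,\dots,c_{d-1},a_0,\dots,a_{d-1}$ generate a finitely generated subfield of $k$ of characteristic zero, and a standard specialization (spreading-out) argument then provides an odd prime $p$ and an embedding of this subfield into a finite extension $F/\Qp$ under which $A$ becomes an element of $\mathrm{GL}_d(\O)$, where $\O$ is the valuation ring of $F$. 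Replacing $k$ by $F$, I may assume we are in this situation.

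Next, since the principal congruence subgroups of $\mathrm{GL}_d(\O)$ have finite index (the reduction $\mathrm{GL}_d(\O)\to\mathrm{GL}_d(\O/\pi^c)$ is surjective onto a finite group), there is $N\geq 1$ with $A^N\in I+\pi^c M_d(\O)$ for a $c$ large enough that $c>e_{F/\Qp}/(p-1)$ (when $F=\Qp$ and $p$ is odd, $c=1$ already suffices). Writing $A^N=I+B$, the binomial series $(I+B)^t=\sum_{k\geq 0}\binom{t}{k}B^k$ then converges for every $t\in\O$, because $v\!\left(\tbinom{t}{k}B^k\right)\geq k\bigl(c-e_{F/\Qp}/(p-1)\bigr)\to\infty$ (using $v(k!)\leq e_{F/\Qp}\,k/(p-1)$), and it defines a $\mathrm{GL}_d(F)$-valued analytic function of $t$. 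Hence for each residue $r\in\{0,1,\dots,N-1\}$ the function $f_r(t)=u^{\mathsf T}A^r(I+B)^t v$ is an $F$-valued analytic function of $t\in\O$ satisfying $a_{Nt+r}=f_r(t)$ for every integer $t\geq 0$.

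Finally I would apply Strassmann's theorem: a convergent power series on $\O$ that is not identically zero has only finitely many zeros in $\O$. Thus for each $r$, either $f_r\equiv 0$, in which case every $n\equiv r\pmod N$ lies in $\{n\mid a_n=0\}$, or $f_r$ vanishes at only finitely many $t$, contributing finitely many such $n$. Taking the union over the $N$ residue classes exhibits $\{n\mid a_n=0\}$ as the union of finitely many arithmetic progressions (one for each $r$ with $f_r\equiv 0$) together with a finite set, which is the assertion. The main obstacle is the reduction step of the second paragraph: producing a prime $p$ at which the companion matrix $A$ reduces to an invertible matrix over a finite field. In the number-field case this amounts to discarding the finitely many primes dividing the denominators of the $c_i$ or the numerator of $c_0$ (keeping $p$ odd, which in the ramified case one accommodates by enlarging the congruence level $c$ rather than by the choice of $p$), while in the general finitely generated case it requires the spreading-out argument to locate a closed point of good reduction with finite residue field.
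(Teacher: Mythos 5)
The paper does not prove this statement at all: it is quoted as a classical background result with citations to Skolem, Mahler and Lech, so there is no in-paper argument to compare yours against. Your proposal is the standard $p$-adic proof of the Skolem--Mahler--Lech theorem and is correct in outline: writing $a_n=u^{\mathsf T}A^nv$ with $A$ an invertible companion matrix, passing to a suitable non-archimedean completion, interpolating $n\mapsto A^{Nn+r}$ by a $p$-adic analytic function on each residue class modulo $N$, and finishing with Strassmann's theorem, which forces each $f_r$ to be either identically zero (a full arithmetic progression of zeros) or to have finitely many zeros. Two points deserve explicit care. First, the step you yourself flag as the main obstacle is exactly where the real content lies: one needs that the finitely generated field $\Q(c_0,\dots,c_{d-1},a_0,\dots,a_{d-1})$ embeds into a finite extension $F/\Qp$ (in fact into $\Qp$ itself, for infinitely many $p$) in such a way that the finitely many prescribed nonzero elements, in particular $c_0$, become units; a closed point of a spread-out model with finite residue field only gives a specialization to a finite field, not an embedding into a $p$-adic field, so you should either carry out the Noether-normalization/transcendence-basis argument or cite the embedding theorem of Cassels (or Lech's original lemma) at this point. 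Second, to apply Strassmann you must rearrange the double series $\sum_k\binom{t}{k}B^k$, after expanding the binomial coefficients, into a single power series in $t$ with coefficients tending to $0$; this is routine given your estimate $v\bigl(\binom{t}{k}B^k\bigr)\ge k\bigl(c-e_{F/\Qp}/(p-1)\bigr)$, but it is the hypothesis Strassmann actually needs and should be said. With those two standard points supplied, the argument is complete and proves exactly the stated periodicity of the zero set.
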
 

To present its finite analogue, we invoke 
\begin{dfn}
For a finite Galois extension $L/\Q$ and a union of conjugacy classes $C\subset {\rm Gal}(L/\Q)$, 
let $S_{L,C}$ denote the set of prime numbers $p$ whose Frobenius conjugacy class is contained in $C$, say, $p$ hitting $C$. 
A set of prime numbers $S$ is said to be \emph{Frobenian} if there is some $(L,C)$ such that $S$ and $S_{L,C}$ coincide except for finitely many elements (cf.~\cite[Section 4.2]{Serre2012NXp}). 
\end{dfn} 
By the Chebotarev density theorem, the natural density of $S_{L,C}$ is $\frac{\#C}{\#{\rm Gal}(L/\Q)}$, so a Frobenian set has rational natural density. 
Now \Cref{thm.def.A} (1) $\Leftrightarrow$ (2) yields a version of the Skolem--Mahler--Lech theorem for the ring $\mca{A}$:

\begin{cor}[(cf.\,Rosen {\cite[Corollary 1.3]{Rosen2020JNT}})] \label{cor.Rosen.1.3} 
 A set $S$ of prime numbers is Frobenian if and only if there is a rational linear recurrent sequence $(a_n)_n$ such that $S$ is a cofinite subset of $\{p\mid a_p\equiv 0\mod p\}$. 
\end{cor}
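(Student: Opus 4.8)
The plan is to read off both implications from the equivalence \Cref{thm.def.A} (1) $\Leftrightarrow$ (2). The dictionary is: for a finite Galois extension $L/\Q$ and a map $g\in A(L)$, the zero locus $g^{-1}(0)\subset{\rm Gal}(L/\Q)$ is stable under conjugation, and for all but finitely many $p$ the condition $g(\varphi_p)\equiv 0\bmod p$ is equivalent to $g(\varphi_p)=0$ in $L$, hence to $\varphi_p$ hitting $g^{-1}(0)$.

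\emph{The ``only if'' direction.} Suppose $S$ is Frobenian, so $S$ and $S_{L,C}$ coincide outside a finite set for some finite Galois $L/\Q$ and union of conjugacy classes $C\subset{\rm Gal}(L/\Q)$. Define $g\colon{\rm Gal}(L/\Q)\to\Q\subset L$ by $g(\sigma)=0$ for $\sigma\in C$ and $g(\sigma)=1$ otherwise. Since $C$ is conjugation-stable and the values of $g$ lie in $\Q$, hence are fixed by ${\rm Gal}(L/\Q)$, we get $g(\sigma\tau\sigma^{-1})=g(\tau)=\sigma(g(\tau))$, so $g\in A(L)$. Applying \Cref{thm.def.A} (2) $\Rightarrow$ (1) to ${\rm ev}_L(g)=[(g(\varphi_p)\bmod p)_p]$ yields a linear recurrent sequence $(a_n)_n$ over $\Q$ with $a_p\equiv g(\varphi_p)\bmod p$ for $p\gg 0$; for such $p$ we have $a_p\equiv 0\bmod p$ precisely when $g(\varphi_p)=0$, i.e. when $\varphi_p$ hits $C$, i.e. when $p\in S_{L,C}$. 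Hence $\{p\mid a_p\equiv 0\bmod p\}$ and $S$ differ by a finite set $E$. Replacing $a_p$ by $0$ for each of the finitely many $p\in E$ that lie in $S$ — which preserves the property of being a linear recurrent sequence over $\Q$, since the sequence is unchanged for $n\gg 0$ — we may assume $S\subseteq\{p\mid a_p\equiv 0\bmod p\}$, and this set still differs from $S$ by only finitely many primes, as required.

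\emph{The ``if'' direction.} Suppose $(a_n)_n$ is a linear recurrent sequence over $\Q$ and $S$ is a cofinite subset of $T:=\{p\mid a_p\equiv 0\bmod p\}$. By \Cref{thm.def.A} (1) $\Rightarrow$ (2), pick a finite Galois $L/\Q$ and $g\in A(L)$ with $a_p\equiv g(\varphi_p)\bmod p$ for $p\gg 0$, and set $C:=g^{-1}(0)$. The relation $g(\sigma\tau\sigma^{-1})=\sigma(g(\tau))$ shows that $g(\tau)=0$ implies $g(\sigma\tau\sigma^{-1})=0$, so $C$ is a union of conjugacy classes. As ${\rm Gal}(L/\Q)$ is finite, $g$ takes only finitely many values, and each nonzero value reduces to a nonzero element modulo all but finitely many primes of $L$; hence outside a finite set of rational primes, $g(\varphi_p)\equiv 0\bmod p$ holds precisely when $g(\varphi_p)=0$ in $L$, i.e. when $\varphi_p$ hits $C$, i.e. when $p\in S_{L,C}$. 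Therefore $T$ agrees with $S_{L,C}$ up to a finite set; since $S$ is cofinite in $T$, the sets $S$ and $S_{L,C}$ agree up to a finite set, and $S$ is Frobenian.

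\emph{The main obstacle.} Everything is a mechanical unwinding of the Frobenius-evaluation map except the equivalence ``$g(\varphi_p)\equiv 0\bmod p\Leftrightarrow g(\varphi_p)=0$ in $L$'' used in the ``if'' direction: here $g$ is $L$-valued rather than $\Q$-valued, so $g(\varphi_p)\bmod p$ — the reduction of $g(\varphi_p)$ in the residue field $\Z/p\Z$ of the decomposition field, as in \Cref{rem.Frob.A} — is a priori only implied by, not equivalent to, the vanishing of $g(\varphi_p)$ in $L$; one must also note that a different choice of prime of $L$ above $p$ replaces $g(\varphi_p)$ by a Galois conjugate, so its vanishing is well defined. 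Both points are disposed of by the elementary fact that each of the finitely many nonzero values of $g$ is nonzero modulo all but finitely many primes of $L$, which confines the discrepancy to a finite set of $p$ — harmless since the definition of Frobenian already tolerates a finite discrepancy. The same observation explains why, in the ``only if'' direction, the finitely many small or ramified primes also cause no trouble.
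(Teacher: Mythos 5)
Your proof is correct and follows essentially the same route as the paper's proof of the positive-characteristic analogue (\Cref{cor.Frob.AK}): both directions are unwound from \Cref{thm.def.A} (1) $\Leftrightarrow$ (2), with the zero locus $g^{-1}(0)$ giving the union of conjugacy classes in one direction and a $\{0,1\}$-valued conjugation-invariant function attached to $C$ in the other. The only differences are cosmetic: the paper absorbs the finite discrepancy by multiplying the sequence by a suitable constant whereas you alter finitely many terms (both preserve linear recurrence over $\Q$), and you spell out explicitly the finitely-many-primes argument showing $g(\varphi_p)\equiv 0 \bmod p$ agrees with $g(\varphi_p)=0$ in $L$ outside a finite set, a point the paper leaves implicit.
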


\Cref{eg.Fn} and \Cref{cor.Rosen.1.3} extend to the study of further constructions of $(a_n)_n$'s in \cite[Theorems 1.3, 1.6]{RosenTakeyamaTasakaYamamoto2024JNT} (see \Cref{rem.S={}}). 

Another consequence of \Cref{thm.def.A} (1) $\Leftrightarrow$ (2) is the following. 

\begin{thm}[(Rosen {\cite[Theorem 1.4, latter half]{Rosen2020JNT}})] \label{thm.Rosen1.4} 
Let $\alpha\in \mca{P}^0_{\mca{A}}$ and let $f(x)\in \Q[x]$ with $f(\alpha)=0$ in $\mca{A}$. 
Then $f(x)$ has a root in $\Q$. 
\end{thm}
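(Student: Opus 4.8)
The plan is to reduce to the characterization of $\mathcal{P}^0_{\mathcal{A}}$ given in \Cref{thm.def.A}\,(2) and then to evaluate the relation $f(\alpha)=0$ along the primes that split completely in the associated Galois extension.

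First I will fix, by \Cref{thm.def.A}\,(2), a finite Galois extension $L/\Q$ with group $G={\rm Gal}(L/\Q)$ and a map $g\colon G\to L$ satisfying $g(\sigma\tau\sigma^{-1})=\sigma(g(\tau))$ for all $\sigma,\tau\in G$ and $\alpha=[(g(\varphi_p)\bmod p)_p]$. Setting $\tau={\rm id}$ in the equivariance relation gives $g({\rm id})=\sigma(g({\rm id}))$ for every $\sigma\in G$, so $r:=g({\rm id})$ lies in the fixed field $\Q$.

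Next I will consider the set $T$ of rational primes that split completely in $L$, which is the set $S_{L,\{{\rm id}\}}$ of primes hitting the trivial conjugacy class. By the Chebotarev density theorem $T$ is infinite, of natural density $1/[L:\Q]$. For each $p\in T$ the Frobenius $\varphi_p$ is the identity of $G$, so $g(\varphi_p)=r$ and the $p$-th component of $\alpha$ equals $r\bmod p$; consequently, for all large $p\in T$, the $p$-th component of $f(\alpha)$ equals $f(r)\bmod p$. Since $f(\alpha)=0$ in $\mathcal{A}$ forces this component to vanish for all sufficiently large $p$, I obtain $f(r)\equiv 0\bmod p$ for infinitely many primes $p$; as $f(r)$ is a single fixed rational number (write it in lowest terms as $a/b$; then $p\mid a$ for all large such $p$, so $a=0$), this yields $f(r)=0$. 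Thus $r\in\Q$ is a root of $f$, as desired.

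I do not expect a serious obstacle here; the argument is essentially three lines. The only points that need a touch of care are the correct componentwise reading of ``$f(\alpha)=0$ in $\mathcal{A}$'' — it only asserts the congruence for $p\gg 0$, but that still leaves infinitely many primes of $T$ available — and the remark that for a completely split prime the Frobenius is literally the identity of $G$, so that $g(\varphi_p)$ is genuinely the rational number $r$ rather than merely a conjugate of it.
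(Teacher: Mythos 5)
Your argument is correct and is essentially the same as the proof the paper gives for its positive-characteristic analogue (\Cref{thm.root.AK}): use characterization (2), evaluate $g$ at the identity to get a rational $r$ fixed by conjugation-equivariance, and use Chebotarev on the completely split primes to force $f(r)=0$. No gaps worth noting beyond the care you already took about components for $p\gg 0$.
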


Furthermore, we have a formula on the density of $p$'s with $f(a_p)=0$ for $f(x)\in \Q[x]$ and $(a_p)_p\in \prod_p \Z/p\Z$: 

\begin{thm}[(Rosen {\cite[Theorem 1.6]{Rosen2020JNT}})] Let $f(x)\in \Q[x]$. Then the natural densities satisfy 
\[ \underset{[(a_p)_p]\in \mca{P}^0_{\mca{A}}}{\rm sup} \delta(\{p\mid f(a_p)=0\})=
\delta(\{p\mid f(x)\text{\ has a root in\ }\Z/p\Z\}).\] 
If $f(x)$ has no root in $\Q$, then no element of $\mca{P}^0_{\mca{A}}$ realizes the supremum. 
\end{thm}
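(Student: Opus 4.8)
The plan is to translate everything into the language of \Cref{thm.def.A}(2), under which the problem becomes a question about finite Galois groups acting on the roots of $f$, and then to prove the inequality ``$\le$'', the non-attainment, and the approximation separately. Fix notation: let $L_{0}$ be the splitting field of $f$ over $\Q$, $G=\mathrm{Gal}(L_{0}/\Q)$, let $R\subset L_{0}$ be the finite $G$-set of roots of $f$, and put $C=\{\sigma\in G:\sigma\beta=\beta\text{ for some }\beta\in R\}$, a union of conjugacy classes. The right-hand side equals $\#C/\#G$: for all but finitely many $p$, $f$ has a root in $\Z/p\Z$ iff the Frobenius class of $p$ meets $C$, so that density exists and equals $\#C/\#G$ by the Chebotarev density theorem. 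Now take $\alpha=[(a_{p})_{p}]\in\mca{P}^{0}_{\mca{A}}$; by \Cref{thm.def.A}(2) and \Cref{rem.Frob.A}, after enlarging $L$ to contain $L_{0}$ (pulling $g$ back along the restriction map) we may write $a_{p}=g(\varphi_{p})\bmod p$ with $\wt{L}\supseteq L_{0}$ finite Galois, $\wt{G}=\mathrm{Gal}(\wt{L}/\Q)\twoheadrightarrow G$, and $g\colon\wt{G}\to\wt{L}$ satisfying $g(\tau\sigma\tau^{-1})=\tau(g(\sigma))$. Taking $\tau=\sigma$ shows $g(\sigma)$ is always fixed by $\sigma$; consequently, if $g(\sigma)\in R$ then $\sigma$ lies in $\wt{C}$, the preimage of $C$ under $\wt{G}\twoheadrightarrow G$. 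Since $f\in\Q[x]$, the set $Z=\{\sigma\in\wt{G}:g(\sigma)\in R\}$ is a union of conjugacy classes with $Z\subseteq\wt{C}$, and a routine Chebotarev argument gives $\delta(\{p:f(a_{p})=0\})=\#Z/\#\wt{G}\le\#\wt{C}/\#\wt{G}=\#C/\#G$, which is ``$\le$''.

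For attainment: if $f$ has a root $r\in\Q$, the constant sequence $a_{n}=r$ gives $r\in\mca{P}^{0}_{\mca{A}}$ with $f(a_{p})=0$ for almost all $p$, so the supremum is $1=\#C/\#G$ and is attained. Conversely, suppose some $\alpha$ as above attains $\#C/\#G$; then $\#Z=\#\wt{C}$, hence $Z=\wt{C}$, and since the identity fixes every root, $e\in\wt{C}=Z$, i.e.\ $g(e)\in R$. But $g(e)=g(\sigma e\sigma^{-1})=\sigma(g(e))$ for every $\sigma$, so $g(e)\in\wt{L}^{\wt{G}}=\Q$; thus $f$ has a rational root. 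This proves the final sentence.

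It remains to show, for each $\varepsilon>0$, that some $\alpha\in\mca{P}^{0}_{\mca{A}}$ has $\delta(\{p:f(a_{p})=0\})>\#C/\#G-\varepsilon$. By the reformulation it suffices to build a finite Galois $\wt{L}\supseteq L_{0}$ with $\#\wt{C}'/\#\wt{G}>\#C/\#G-\varepsilon$, where $\wt{C}'=\{\sigma\in\wt{C}:R^{C_{\wt{G}}(\sigma)}\neq\emptyset\}$: choosing one conjugate $\sigma_{O}$ from each $\wt{G}$-conjugacy class $O\subseteq\wt{C}'$ together with a root $\beta_{O}\in R^{C_{\wt{G}}(\sigma_{O})}$, the rule $g(\tau\sigma_{O}\tau^{-1}):=\tau(\beta_{O})$ (and $g:=0$ off $\wt{C}'$) is well defined and conjugation-covariant, so $\alpha:=[(g(\varphi_{p}))_{p}]\in\mca{P}^{0}_{\mca{A}}$ has $Z\supseteq\wt{C}'$. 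To produce such $\wt{L}$, fix a prime $\ell$, set $L=L_{0}(\mu_{\ell})$ and $G'=\mathrm{Gal}(L/\Q)$, and for $m\ge1$ let $\wt{L}=\wt{L}_{\ell,m}$ be the Kummer extension of $L$ attached to a free $\F_{\ell}[G']$-submodule of rank $m$ inside $L^{\times}/(L^{\times})^{\ell}$, so that $N:=\mathrm{Gal}(\wt{L}/L)\cong\F_{\ell}[G']^{m}$ as a $G'$-module; since $\F_{\ell}[G']^{m}$ is an induced module, $H^{2}(G',\F_{\ell}[G']^{m})=0$ and $\wt{G}\cong N\rtimes G'$. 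A centralizer computation in $N\rtimes G'$ shows that, for $\sigma_{0}'\in\wt{C}$ lying over $\sigma_{0}\in C$ with $\sigma_{0}\beta=\beta$, an element $(n,\sigma_{0}')$ lies in $\wt{C}'$ as soon as $n\bmod(1-\sigma_{0}')N$ is fixed by no element of $C_{G'}(\sigma_{0}')\setminus\mathrm{Stab}_{G'}(\beta)$; using that anything acting trivially on $N/(1-\sigma_{0}')N$ lies in $\langle\sigma_{0}'\rangle\subseteq\mathrm{Stab}_{G'}(\beta)$, the exceptional $n$ fill a union of at most $\#G'$ subgroups of $\F_{\ell}$-codimension $\ge m$, a proportion $\le\#G'/\ell^{m}$. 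Summing these contributions over $\wt{C}$ gives $\#\wt{C}'/\#\wt{G}\ge(1-\#G'/\ell^{m})\,\#C/\#G$, which tends to $\#C/\#G$ as $m\to\infty$, with $\ell$ (and hence $G'$) fixed.

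The main obstacle is this last construction. One must (i) realize a Galois extension of $\Q$ whose group has the prescribed split shape $N\rtimes G'$ with $N$ free over $G'=\mathrm{Gal}(L_{0}(\mu_{\ell})/\Q)$ — a Kummer-theoretic input, resting on the fact that $L^{\times}/(L^{\times})^{\ell}$ contains free $\F_{\ell}[G']$-submodules of every finite rank — and (ii) carry out the centralizer bookkeeping in $N\rtimes G'$, the crux being that an element acting trivially on $N/(1-\sigma_{0}')N$ must lie in $\langle\sigma_{0}'\rangle$; this is what makes the codimension bound uniform in $m$ and forces the exceptional proportion to vanish. By contrast, the inequality ``$\le$'' and the non-attainment are immediate once the problem is recast in terms of conjugation-covariant $\wt{G}$-valued data and Chebotarev's theorem.
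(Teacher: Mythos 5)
Your proposal is correct and follows essentially the same route that the paper takes for its analogue \Cref{thm.density.AK} (i.e.\ Rosen's original argument): the upper bound and non-attainment via conjugation-covariant $g$ and the rationality of $g(e)$, and the approximation of the supremum by passing to an auxiliary extension with group $N\rtimes G'$, $N$ a free $\F_\ell[G']$-module obtained from Kummer theory, combined with the same centralizer/codimension count as in \Cref{lem.Rosen.2}, \Cref{lem.Rosen.3}, and \Cref{lem.wreathext}. Your deviations are cosmetic — a general $\ell$ with $\mu_\ell$ adjoined and $m\to\infty$ in place of $(\Z/2\Z)^r$ with $r\to\infty$, and an explicit construction of the extremal $g$ instead of quoting Rosen's Lemma~3.2 — and the one input you leave unproved (free $\F_\ell[G']$-submodules of $L^\times/(L^\times)^\ell$ of arbitrary finite rank) is true and is exactly what the paper's \Cref{lem.wreathext} establishes via Chebotarev and principal split primes.
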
 

\begin{rem} \label{rem.PA0.motive}
As pointed out in \cite[Section 4.1]{Rosen2020JNT}, if we regard ${\rm Spec}\,L$ as a 0-dimensional variety over $\Q$, then the algebraic de Rham cohomology satisfies $H^0_{dR}({\rm Spec}\,L)\cong L$. 
By \Cref{thm.def.A} (3), $\mca{P}^0_{\mca{A}}$ is the $\Q$-span of the matrix coefficients of the $H^0_{dR}({\rm Spec}\,L)\otimes \mca{A} \congto H^0_{dR}({\rm Spec}\,L)\otimes \mca{A}$, where $L/\Q$ runs through all finite Galois extensions. 
If we instead consider the de Rham--Betti comparison isomorphism 
$H^0_{dR}({\rm Spec}\,L)\otimes \C\congto H^0_{B}({\rm Spec}\,L)\otimes \C$, 
then the $\Q$-span of the matrix coefficients becomes an algebraic closure $\ol{\Q}$ of $\Q$. 

Furthermore, let $X$ run through all algebraic varieties over $\Q$ to define 
the $\Q$-subalgebras $\mca{P}_\C\subset \C$ generated by all $\C$-valued periods $\mca{P}_\C$ and $\mca{P}_{\mca{A}}\subset \mca{A}$ generated by all $\mca{A}$-valued periods. 
The parallel inclusions 
\begin{center}
\begin{tabular}{|ccccc|} \hline
$\ol{\Q}$ & $\subsetneq$ & $\mca{P}_\C$ & $\subsetneq$ & $\C$ \\ \hline
$\mca{P}_\mca{A}^0$ & $\subsetneq$ & $\mca{P}_{\mca{A}}$ & $\subsetneq$ & $\mca{A}$ \\ \hline
\end{tabular}
\end{center}
explain that $\mca{P}_\mca{A}^0$, rather than $\mca{C}_\mca{A}$, is a finite analogue of $\ol{\Q}$. 
\end{rem}

\subsection{Positive characteristic analogue $\mca{P}^0_{\mca{A}_K}$ (our results)} \label{ss.P0AK} 

Now let $q$ be a power of a prime number $p$ and let $\theta$ be an indeterminate element. 
We aim to replace $\Z$ and $\Q$ in the previous subsection by 
the polynomial ring $R=\F_q[\theta]$ and 
the rational function field $K={\rm Frac}\,R=\F_q(\theta)$.
Chang--Mishiba's ring is defined by 
\[\mca{A}_K=\frac{\prod_P R/(P)}{\bigoplus_P R/(P)},\]
where $P$ runs through the set of all monic irreducible elements of $R$. 
Via the diagonal embedding $K\inj \mca{A}_K$, we often assume $K\subset \mca{A}_K$. 
 
 \begin{center}
\begin{tabular}{|c||c|} \hline
$\Z$ & $R=\F_q[\theta]$ \\ \hline
$\Q$ & $K=\F_q(\theta)$ \\ \hline
prime number $p$ & monic irreducible $P$ \\ \hline
$\mca{A}$ & $\mca{A}_K$ \\ \hline  
\end{tabular}
\end{center}

We claim that, under the following natural modification, 
the analogues of all assertions in \Cref{ss.PA0} hold true, 
in fact, as Theorems \ref{thm.def.AK}--\ref{thm.density.AK} below demonstrate:

\begin{center}
\begin{tabular}{|c||c|}  \hline
linear recurrence formula & linear recurrence formula with \\ 
& a separable eigen polynomial\\ \hline 
$a_p$ mod $p$ & $a_{q^{{\rm deg}\, P}}$ mod $P$ \\ \hline 
finite extension & finite separable extension\\ \hline 
\end{tabular}
\end{center}

Here, $f(x)\in K[x]$ is said to be \emph{separable} if it has no multiple roots. 
An eigen polynomial is said to be separable (for short) if it is a product of separable polynomials. 

\begin{thm} \label{thm.def.AK} 
Let $\alpha\in \mca{A}_K$. The following conditions are equivalent. 

{\rm (1)} There is a linear recurrent sequence $(a_n)_n$ over $K$ whose eigen polynomial is a product of separable polynomials in $K[x]$ such that $\alpha=[(a_{q^{{\rm deg}P}}\;{\rm mod}\; P)_P]$ holds. 

{\rm (2)} There exist a finite Galois extension $L/K$ and a map $g:{\rm Gal}(L/K)\to L$ satisfying ``$g(\sigma\tau\sigma^{-1}) = \sigma(g(\tau))$ for every $\sigma,\tau\in {\rm Gal}(L/K)$'' such that  $\alpha=[(g(\varphi_P)\;{\rm mod}\; P)_P]$ holds, where $\varphi_P\in {\rm Gal}(L/K)$ denotes ``the Frobenius at $P$'' {\rm (see \Cref{rem.Frobev.AK})}. 

{\rm (3)} There exists a finite Galois extension $L/K$ such that, for an arbitrarily chosen basis of $L$ over $K$, $\alpha$ is a $K$-linear combination of the matrix coefficients of ``the $\mca{A}_K$-valued Frobenius automorphism'' $F_{\mca{A}_K}:L\otimes \mca{A}_K\to L\otimes \mca{A}_K$ {\rm (see \Cref{def.AKFrob})}. 
\end{thm}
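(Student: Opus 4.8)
The plan is to unfold the definitions and run through Rosen's proof of \Cref{thm.def.A}, inserting the separability hypotheses precisely where the characteristic-zero argument would break. Throughout, for a finite Galois extension $L/K$ write $G={\rm Gal}(L/K)$, let $\mca{O}_L$ be the integral closure of $R$ in $L$, and for a monic irreducible $P$ unramified in $L$ and a prime $\mathfrak{P}\mid P$ let $\varphi_\mathfrak{P}\in G$ denote the element inducing $x\mapsto x^{q^{\deg P}}$ on $\mca{O}_L/\mathfrak{P}$, so that $y^{q^{\deg P}}\equiv\varphi_\mathfrak{P}(y)\pmod{\mathfrak{P}}$ for $y\in\mca{O}_L$ (the index $q^{\deg P}$ is the size of the residue field $R/P$, replacing $p$). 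I will prove $(1)\Rightarrow(2)$, $(2)\Rightarrow(1)$, $(2)\Rightarrow(3)$, $(3)\Rightarrow(1)$; since the last invokes $(2)\Rightarrow(1)$, it comes last. The well-definedness of ${\rm ev}_L$ on the space $A(L)$ of maps $g\colon G\to L$ with $g(\sigma\tau\sigma^{-1})=\sigma(g(\tau))$ (\Cref{rem.Frobev.AK}) is used freely, and $F_{\mca{A}_K}$ is used as in \Cref{def.AKFrob}, i.e.\ the componentwise $q^{\deg P}$-power map on $\mca{O}_L\otimes_R\mca{A}_K$ --- which is $\mca{A}_K$-linear because $x^{q^{\deg P}}=x$ on $R/P$.

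For $(1)\Rightarrow(2)$: if the eigen polynomial $m(x)\in K[x]$ of $(a_n)_n$ is a product of separable polynomials then each of its irreducible factors is separable, so its splitting field $L$ is finite Galois over $K$; this is exactly the role of the separability hypothesis. Write $a_n=\sum_k Q_k(n)\beta_k^n$ over $L$, where $\beta_k$ are the distinct roots of $m$ (with multiplicity $e_k$) and $Q_k(n)=\sum_{0\le j<e_k}c_{k,j}\binom{n}{j}$ with $c_{k,j}\in L$. Since ${\rm char}\,K=p\mid q$, Lucas' theorem gives $\binom{q^{\deg P}}{j}\equiv\delta_{j,0}\pmod p$ for $0\le j<q^{\deg P}$, so for all but finitely many $P$ the ``polynomial parts'' collapse and $a_{q^{\deg P}}=\sum_k c_{k,0}\beta_k^{q^{\deg P}}$; applying $\sigma\in G$ and using $\sigma(a_n)=a_n$ together with linear independence of $(\binom{n}{j}\beta_k^n)_n$ forces $\sigma(c_{k,0})=c_{\pi_\sigma(k),0}$ whenever $\sigma(\beta_k)=\beta_{\pi_\sigma(k)}$. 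Then $g(\tau):=\sum_k c_{k,0}\,\tau(\beta_k)$ lies in $A(L)$ (reindex its defining identity), and $a_{q^{\deg P}}\equiv\sum_k c_{k,0}\varphi_\mathfrak{P}(\beta_k)=g(\varphi_\mathfrak{P})\pmod{\mathfrak{P}}$ with common value in $R/P$, so $\alpha={\rm ev}_L(g)$. For $(2)\Rightarrow(1)$: by the normal basis theorem choose $\beta\in L$ whose conjugates form a $K$-basis of $L$; then $\beta$ is primitive and $m:={\rm minpoly}_K(\beta)$ is separable of degree $[L:K]$. Using the isomorphism $L\otimes_K L\xrightarrow{\ \sim\ }\prod_{\sigma\in G}L$, $x\otimes y\mapsto(x\sigma(y))_\sigma$ (an isomorphism because $L/K$ is separable) and the normal-basis property, one checks every $g\in A(L)$ equals $\tau\mapsto\sum_{\sigma\in G}\sigma(c_0)\,\tau\sigma(\beta)$ for a unique $c_0\in L$. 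Then $a_n:=\sum_\sigma\sigma(c_0)\sigma(\beta)^n={\rm Tr}_{L/K}(c_0\beta^n)\in K$ is a linear recurrent sequence over $K$ with separable eigen polynomial $m$, and $a_{q^{\deg P}}\equiv\sum_\sigma\sigma(c_0)\varphi_\mathfrak{P}\sigma(\beta)=g(\varphi_\mathfrak{P})\pmod{\mathfrak{P}}$, giving $\alpha=[(a_{q^{\deg P}}\bmod P)_P]$.

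For $(2)\Rightarrow(3)$: with the same isomorphism write $g(\sigma)=\sum_l x_l\sigma(y_l)$ ($x_l,y_l\in\mca{O}_L$ after discarding finitely many $P$), and consider $\Gamma:=\sum_l \ol{x_l}\cdot F_{\mca{A}_K}(\ol{y_l})\in L\otimes_K\mca{A}_K$ (bars denoting images of $L=L\otimes_K K$). At the prime $\mathfrak{P}\mid P$ its component is $\sum_l x_l y_l^{q^{\deg P}}\equiv\sum_l x_l\varphi_\mathfrak{P}(y_l)=g(\varphi_\mathfrak{P})\pmod{\mathfrak{P}}$, which by \Cref{rem.Frobev.AK} is independent of $\mathfrak{P}\mid P$ and lies in $R/P$; hence $\Gamma$ is the image of $\alpha$ under $\mca{A}_K\hookrightarrow L\otimes_K\mca{A}_K$. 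Expanding $x_l,y_l$ in an arbitrary $K$-basis $\{e_i\}$ of $L$, using the multiplication table of $L/K$ and $F_{\mca{A}_K}(e_j)=\sum_i[F]_{ij}e_i$, and comparing the coefficient of some $e_m$ appearing in $1$ with that of $\alpha\cdot 1_L$, exhibits $\alpha$ as a $K$-linear combination of the $[F]_{ij}$; since the $K$-span of the matrix coefficients is basis-independent this gives (3) for any basis. Finally $(3)\Rightarrow(1)$: a finite $K$-linear combination of sequences of type (1) is again of type (1) (a sum of linear recurrent sequences is linear recurrent, with eigen polynomial the lcm, still a product of separable polynomials), so it suffices to realize each $[F]_{ij}$ by (1). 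Take $\{e_i\}\subset\mca{O}_L$ with dual basis $\{d_i\}\subset L$ for the trace form (nondegenerate since $L/K$ is separable); reading off the $i$-th coordinate of $F_{\mca{A}_K}(e_j)$ and using that for unramified $P$ the trace of the \'etale $R/P$-algebra $\mca{O}_L/P\mca{O}_L$ is $\sum_{\nu\in G}\nu$ (with $G$ acting trivially on $R/P$), one obtains, for such $P$ and any $\mathfrak{P}\mid P$,
\[
[F]_{ij,P}\equiv\sum_{\nu\in G}\nu(d_i)\,\nu(e_j)^{q^{\deg P}}\equiv\sum_{\nu\in G}\nu(d_i)\,\varphi_\mathfrak{P}\!\bigl(\nu(e_j)\bigr)=g_{ij}(\varphi_\mathfrak{P})\pmod{\mathfrak{P}},
\]
where $g_{ij}(\tau):=\sum_{\nu\in G}\nu(d_i)\,\tau\nu(e_j)$ lies in $A(L)$. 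Hence $[F]_{ij}={\rm ev}_L(g_{ij})$ is of type (1) by $(2)\Rightarrow(1)$, and so is $\alpha=\sum_{ij}c_{ij}[F]_{ij}={\rm ev}_L\bigl(\sum_{ij}c_{ij}g_{ij}\bigr)$.

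I expect the main obstacle to be the bookkeeping in $(3)\Rightarrow(1)$: the identification of matrix coefficients of $F_{\mca{A}_K}$ with Frobenius-evaluations of conjugacy-equivariant maps, which hinges on the $G$-action on the \'etale algebra $\mca{O}_L/P\mca{O}_L$, the identity ${\rm Tr}_{(\mca{O}_L/P\mca{O}_L)/(R/P)}=\sum_{\nu\in G}\nu$ for unramified $P$, and the triviality of this action on $R/P$. The only essentially new characteristic-$p$ input is the vanishing of $\binom{q^{\deg P}}{j}$ modulo $p$ used in $(1)\Rightarrow(2)$, which forces the polynomial parts of recurrences to collapse to constants at the indices $q^{\deg P}$; everywhere else ``separability'' is exactly the hypothesis that makes splitting fields Galois, trace forms nondegenerate, and $L\otimes_K L\cong\prod_\sigma L$, after which Rosen's argument transfers.
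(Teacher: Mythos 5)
Your proof is correct, and it reaches the theorem by a genuinely different route in several places, so let me compare. For (1)$\Rightarrow$(2) the paper works with the companion matrix and its Jordan--Chevalley decomposition $C=SU$, kills the unipotent part via $U^{q^d}=I$, and obtains the Galois-equivariant pairs $(b_i,\lambda_i)$ from a Hilbert 90 argument (\Cref{lem.diag-perm}); you instead expand $a_n$ in the binomial basis $\binom{n}{j}\beta_k^n$ of the solution space over the splitting field and use Lucas' theorem to collapse the polynomial parts at $n=q^{\deg P}$, deriving the equivariance $\sigma(c_{k,0})=c_{\pi_\sigma(k),0}$ from uniqueness of coefficients; this is more elementary and isolates the same mechanism (the unipotent part dies at $p$-power exponents). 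For (2)$\Rightarrow$(1) the paper uses the surjectivity of the trace map onto $(L\otimes L)^\Gamma$ (\Cref{lem.Ginv-perm}), while you parametrize $A(L)$ by a single $c_0\in L$ through a normal basis generator $\beta$, getting $a_n={\rm Tr}_{L/K}(c_0\beta^n)$ with eigen polynomial the minimal polynomial of $\beta$; the claim ``every $g\in A(L)$ is of this form for a unique $c_0$'' that you leave as ``one checks'' does need a short argument (injectivity of $c_0\mapsto\sum_\sigma\sigma(c_0)\otimes\sigma(\beta)$ plus $\dim_K A(L)=[L:K]$), but it is true and easy. For (2)$\Leftrightarrow$(3) the paper proves in one stroke that the $K$-span of the matrix coefficients of $F_{\mca{A}_K,L}$ equals ${\rm Im}\,{\rm ev}_L$ (\Cref{lem.Kspan}, via $L\cong L^\vee$ by the trace form and the chain through $(L\otimes L)^\Gamma\cong A(L)$), whereas you split the equivalence into the cycle (2)$\Rightarrow$(3)$\Rightarrow$(1): a direct coefficient comparison in $L\otimes\mca{A}_K$ for one inclusion, and trace-dual bases together with ${\rm Tr}=\sum_\nu\nu$ modulo unramified $P$ for the other, which is the same duality input with more explicit bookkeeping. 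Your treatment of the finitely many bad $P$ (denominators, ramified primes, small $q^{\deg P}$) and of zero roots of the eigen polynomial is implicit but harmless, since finitely many components are irrelevant in $\mca{A}_K$. The paper's route produces reusable lemmas (\Cref{lem.GinvAL}, \Cref{lem.Ginv-perm}, \Cref{lem.Kspan}) that are quoted again in later sections; yours is more self-contained and pinpoints the only genuinely characteristic-$p$ input, the vanishing of $\binom{q^{\deg P}}{j}$ mod $p$, exactly where the paper invokes the Jordan--Chevalley decomposition.
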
 

\begin{rem} \label{rem.Frobev.AK}
The element $[(g(\varphi_P)\mod P)_P]$ in \Cref{thm.def.AK} (2) is first defined to be $[(g(\varphi_\mf{p})\mod \mf{p})_\mf{p}]\in \mca{A}_L:=L\otimes \mca{A}_K$, 
but eventually turns out to live in the image of the natural injective homomorphism $\mca{A}_K\inj \mca{A}_L$. 
Namely, let $A(L)$ denote the set of all $g$'s with the condition in (2). 
Then there is a well-defined map 
\[{\rm ev}_L:A(L)\to \mca{A}_K;\ g\mapsto [(g(\varphi_P)\mod P)_P]\]  
called \emph{the Frobenius-evaluation map} (\Cref{prop.Frobev.AK}), 
and the condition (2) becomes ``$\alpha\in {\rm Im}\,{\rm ev}_L$''. 
\end{rem}

\begin{dfn} 
An element $\alpha\in \mca{A}_K$ is said to be a \emph{finite separable element} over $K$ if $\alpha$ satisfies the equivalent three conditions in \Cref{thm.def.AK}. 
The set of all finite separable elements over $K$ is denoted by $\mca{P}^0_{\mca{A}_K}$. 
In addition, the set of all $\alpha\in \mca{A}_K$ that is separable (resp. algebraic) over $K$ is denoted by 
$\mca{C}^{\rm sep}_{\mca{A}_K}$ (resp. $\mca{C}^{\rm alg}_{\mca{A}_K}$). \end{dfn}

\begin{thm} \label{thm.subset.AK} 

{\rm (1)} The set $\mca{P}^0_{\mca{A}_K}$ is a $K$-subalgebra of $\mca{A}_K$. 

{\rm (2)} There are proper inclusions 
\[K\subsetneq \mca{P}^0_{\mca{A}_K} \subsetneq \mca{C}^{{\rm sep}}_{\mca{A}_K} \subsetneq \mca{C}^{{\rm alg}}_{\mca{A}_K} \subsetneq \mca{A}_K.\] 
\end{thm}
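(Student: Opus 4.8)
The plan is to mirror, in the function-field setting, the chain of arguments that yield the classical \Cref{thm.incl}, adapting each link to the separable hypotheses.

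\textbf{Part (1): $\mca{P}^0_{\mca{A}_K}$ is a $K$-subalgebra.} First I would note that, via \Cref{thm.def.AK}~(3), every $\alpha \in \mca{P}^0_{\mca{A}_K}$ is a $K$-linear combination of matrix coefficients of the $\mca{A}_K$-valued Frobenius automorphism $F_{\mca{A}_K}$ attached to some finite Galois $L/K$; enlarging $L$ (to a common Galois closure, which is still separable over $K$, since a compositum of separable extensions is separable) allows two given elements $\alpha, \beta$ to be described using the same $L$. Then $K$-linearity is immediate from the description, and closure under multiplication follows because a product of two matrix coefficients of $F_{\mca{A}_K}$ on $L \otimes \mca{A}_K$ is again a $K$-linear combination of matrix coefficients of $F_{\mca{A}_K}$ on $L \otimes L \otimes \mca{A}_K$ — or, more cleanly, one passes to the regular representation and observes that matrix coefficients of $F$ on $L$ and on $L'$ together generate those on $L \otimes_K L'$. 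Alternatively, one can argue directly on the side of condition~(1): the termwise sum and product of two linear recurrent sequences is again linear recurrent, and the eigen polynomial of the sum (resp. product) divides (resp. divides a power related to) the product (resp. the ``multiplicative convolution'') of the two eigen polynomials; the only point needing care is that separability is preserved, which holds because roots of the new eigen polynomial are among sums/products of roots of the old ones in a separable closure, hence the new polynomial, being the product of (separable!) minimal polynomials of those elements, is itself a product of separable polynomials. I would present whichever of these two is cleaner; the representation-theoretic one is likely shorter. This establishes (1). Following \cite[Proposition 2.7]{RosenTakeyamaTasakaYamamoto2024JNT}, mutatis mutandis.

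\textbf{Part (2): the chain of proper inclusions.} The inclusions themselves are mostly formal: $K \subset \mca{P}^0_{\mca{A}_K}$ since a constant sequence is linear recurrent with eigen polynomial $x-1$ (separable); $\mca{P}^0_{\mca{A}_K} \subset \mca{C}^{\rm sep}_{\mca{A}_K}$ is the function-field analogue of the former half of \cite[Theorem 1.4]{Rosen2020JNT}, i.e.\ \Cref{thm.Rosen1.4}'s analogue, and I would cite or prove it via condition~(2): the minimal polynomial over $K$ of $g(\varphi_P)$ divides the minimal polynomial of a generic element, which is separable by the separability of $L/K$; and $\mca{C}^{\rm sep}_{\mca{A}_K} \subset \mca{C}^{\rm alg}_{\mca{A}_K} \subset \mca{A}_K$ are tautological. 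For properness: $K \subsetneq \mca{P}^0_{\mca{A}_K}$ should follow from an explicit example — the exact analogue of \Cref{eg.Fn}, namely a sequence satisfying $a_{q^{\deg P}} \equiv \left(\tfrac{D}{P}\right) \pmod P$ for a suitable $D \in R$ (a nonsquare), together with the Chebotarev/Dirichlet density theorem for function fields, which guarantees the set of $P$ with $\left(\tfrac{D}{P}\right) = -1$ has density $\tfrac12$ and hence $\alpha \notin K$. The inclusion $\mca{P}^0_{\mca{A}_K} \subsetneq \mca{C}^{\rm sep}_{\mca{A}_K}$ I expect to prove by the cardinality argument of Rosen: $\mca{P}^0_{\mca{A}_K}$ is countable (countably many $(L, g)$, or countably many linear recurrent sequences over $K$, since $K$ is countable), whereas $\mca{C}^{\rm sep}_{\mca{A}_K}$ is uncountable (one produces uncountably many roots of, say, $x^2 - D$ in $\mca{A}_K$ by choosing independently at each $P$ one of the two square roots of $D$ mod $P$, when both exist — and they exist for a density-$\tfrac12$ set of $P$, which is infinite, so the choices give $2^{\aleph_0}$ distinct classes). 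The inclusions $\mca{C}^{\rm sep}_{\mca{A}_K} \subsetneq \mca{C}^{\rm alg}_{\mca{A}_K}$ and $\mca{C}^{\rm alg}_{\mca{A}_K} \subsetneq \mca{A}_K$ are genuinely new to the positive-characteristic story. For the first I would exhibit an element algebraic but not separable over $K$: the natural candidate is a ``$p$-th root of $\theta$'', i.e.\ an $\alpha = [(\alpha_P)_P]$ with $\alpha_P^p = \theta$ in $R/(P)$ — such $\alpha_P$ exists and is unique because Frobenius is bijective on the finite field $R/(P)$ — which satisfies $x^p - \theta = 0$, an inseparable polynomial with no separable ``factor'' vanishing at $\alpha$ (one must check $\alpha \notin \mca{C}^{\rm sep}_{\mca{A}_K}$, e.g.\ by showing $\alpha \notin K$ and that any monic separable $f \in K[x]$ with $f(\alpha) = 0$ would force $\alpha_P$ to lie in a proper subfield for almost all $P$, a contradiction). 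For $\mca{C}^{\rm alg}_{\mca{A}_K} \subsetneq \mca{A}_K$ I would adapt the argument of \cite[Proposition 3.27, Example 3.8]{AnzawaFunakura2024} to the function-field ring, constructing by a diagonal/counting argument an element of $\mca{A}_K$ satisfying no polynomial over $K$ — again feasible because $K[x]$ is countable while $\mca{A}_K$ is not, and one can arrange the components $\alpha_P$ to evade, one polynomial at a time, every element of a fixed enumeration of $K[x] \setminus \{0\}$.

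\textbf{Main obstacle.} The delicate points are: (i) in Part (1), verifying that \emph{separability of the eigen polynomial} is preserved under the sum/product operations on linear recurrent sequences (if one uses the condition-(1) proof) — this is where the ``separable'' qualifier does real work and must be handled by passing to a separable closure $K^{\rm sep}$ and using that minimal polynomials of elements of $K^{\rm sep}$ are separable; and (ii) proving $\mca{C}^{\rm sep}_{\mca{A}_K} \subsetneq \mca{C}^{\rm alg}_{\mca{A}_K}$, i.e.\ that the putative inseparable element $\alpha$ with $\alpha^p = \theta$ is genuinely \emph{not} separable over $K$ — here one must rule out the possibility that $\alpha$ satisfies some \emph{other}, separable polynomial over $K$, which requires understanding that the $p$-th-root-of-$\theta$ sequence $(\alpha_P)_P$ is not eventually given by Frobenius data attached to a separable extension; this is essentially a statement that the map $P \mapsto \alpha_P$ is ``not Frobenian over any separable $L/K$'', and proving it cleanly is the crux. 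I would isolate this as a separate lemma and prove it by the reduction: if $\alpha \in \mca{C}^{\rm sep}_{\mca{A}_K}$ then for almost all $P$, $\alpha_P$ generates a separable subextension of $\ol{\F_q(\theta)}$ of bounded degree, but $\alpha_P = \theta^{1/p}$ in $R/(P) = \F_{q^{\deg P}}$ depends on $P$ in a way incompatible with any fixed bound when combined with the structure of Frobenius, giving the contradiction.
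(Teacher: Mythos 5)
Your skeleton matches the paper's: (1) via a common Galois extension, and (2) via an explicit density example for $K\subsetneq \mca{P}^0_{\mca{A}_K}$, a countable-versus-uncountable argument for $\mca{P}^0_{\mca{A}_K}\subsetneq \mca{C}^{\rm sep}_{\mca{A}_K}$, a $p$-th root of $\theta$ for $\mca{C}^{\rm sep}_{\mca{A}_K}\subsetneq \mca{C}^{\rm alg}_{\mca{A}_K}$, and a counting/evasion argument for $\mca{C}^{\rm alg}_{\mca{A}_K}\subsetneq \mca{A}_K$ (the paper uses the criterion of \Cref{prop.trans} plus \Cref{eg.trans.AK}; your diagonal construction is an acceptable variant). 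For (1) the paper works with condition (2) rather than (3): $A(L)$ is closed under pointwise sum and product, and $A(L)\hookrightarrow A(L_1L_2)$ is compatible with the evaluation maps (\Cref{lem.LL'}); your matrix-coefficient and linear-recurrence routes can be made to work (your separability bookkeeping for eigenvalues in $K^{\rm sep}$ is the right point), but they carry extra overhead. However, there are two genuine gaps.

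First, your explicit witnesses fail in characteristic $2$, while the theorem is asserted for every prime power $q$. In a finite field of characteristic $2$ every element is a square, so the quadratic-symbol sequence $a_{q^{\deg P}}\equiv\left(\frac{D}{P}\right)$ is identically $1$ and does not separate $\mca{P}^0_{\mca{A}_K}$ from $K$; likewise $x^2-D=(x-\sqrt{D})^2$ is inseparable when $2\mid q$, so your uncountably many ``square roots of $D$'' need not lie in $\mca{C}^{\rm sep}_{\mca{A}_K}$ at all. The paper covers $2\mid q$ by a separate recurrence (\Cref{eg.P0AK}(1), with eigenvalues the $(q+1)$-st roots of unity) and uses roots of $x^2-x$, which is separable in every characteristic, to get uncountability of $\mca{C}^{\rm sep}_{\mca{A}_K}$ (\Cref{lem.P0Csep}). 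Second, the step you yourself flag as the crux — that the element $\alpha=[(\alpha_P)_P]$ with $\alpha_P^{\,p}=\theta$ is \emph{not} in $\mca{C}^{\rm sep}_{\mca{A}_K}$ — is not settled by your sketch: $\alpha_P$ lies in the finite residue field $R/(P)$, so statements about it ``generating a separable subextension of bounded degree'' over $K$ have no content there. The clean argument is coprimality: if $f\in K[x]$ is a product of separable polynomials with $f(\alpha)=0$, then since $x^p-\theta$ is irreducible over $K$ (as $\theta\notin K^p$) and has the single root $\theta^{1/p}$ with multiplicity $p$, it divides no separable polynomial, so $\gcd(f,x^p-\theta)=1$ in $K[x]$; writing $uf+v(x^p-\theta)=1$ with $u,v\in K[x]$ and reducing modulo $P$ for all $P$ of large degree prime to the denominators, evaluation at the common root $\alpha_P$ gives $0=1$ in $R/(P)$, a contradiction. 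With these two repairs (characteristic-$2$ witnesses and the Bezout step) your proof goes through.
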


\begin{eg} \label{eg.P0AK} 
(1) Suppose $q=2^r$ with $r\in \Z_{>0}$.  
Define a linear recurrent sequence $(F_n)_n\in \F_2^\N\subset R^\N$ by $F_0=F_1=\cdots =F_{q-1}=1$ and $F_{n+q}=F_n+F_{n+1}+\cdots +F_{n+q-1}$, 
so  we have $F_{q}=0$ and $F_{n+q+1}=F_n$. 
By $q^n \equiv (-1)^n \mod (q+1)$, we obtain
$F_{q^n}=F_{(-1)^n}=1+n \mod 2.$ 

(2) Suppose $2\nmid q$. 
Define $(F_n)_n\in R^\N$ by $F_1=1$, $F_2=0$, and $F_{n+2}=\theta F_n$. By $R/(P)^\times \cong \Z/(q^{{\rm deg}P}-1)\Z$, 
the Legendre symbol satisfies $F_{q^{{\rm deg} P}}=\theta^{(q^{{\rm deg}P}-1)/2} \equiv \left(\frac{\theta}{P}\right) \mod P$. 

In both cases, the element $\alpha=[(F_{q^{{\rm deg} P}}\mod P)_P]\in \mca{P}^0_{\mca{A}_K}$ is not in the image of $K$ in $\mca{A}_K$ (see \Cref{lem.KP0}), 
whereas $f(x)=x^2+x$ or $x^2-1$ satisfies $f(\alpha)=0$, so $\alpha\in \mca{C}^{\rm sep}_{\mca{A}_K}$.
\end{eg}

\begin{dfn} \label{def.Frobenian.AK} 
For a finite Galois extension $L/K$ and a union of conjugacy classes $C\subset {\rm Gal}(L/K)$, 
let $S_{L,C}$ denote the set of monic irreducible elements $P$ whose Frobenius conjugacy class is contained in $C$, say, $P$ hitting $C$. 
A set $S$ of monic irreducible elements is said to be \emph{Frobenian} if there is some $(L,C)$ such that $S$ and $S_{L,C}$ coincide except for finitely many elements. 
\end{dfn} 
The Chebotarev density theorem (\Cref{lem.Cheb}) asserts that 
$S_{L,C}$ has the Dirichlet density $\delta(S_{L,C})=\frac{\#C}{\#{\rm Gal}(L/K)}\in \Q$, and so does the Frobenian set $S$. 
Furthermore, if $L/K$ is a geometric extension, then their natural density exists and coincides with their Dirichlet density. 

The following two results are consequences of \Cref{thm.def.AK} (1) $\Leftrightarrow$ (2). 

\begin{cor} \label{cor.Frob.AK} 
A set $S$ of monic irreducible elements of $R$ is Frobenian if and only if 
there is a linear recurrent sequence $(a_n)_n$ over $K$ with separable eigen polynomial
such that $S$ is a cofinite subset of $\{P\mid a_{q^{{\rm deg} P}}\equiv 0\mod P\}$. 
\end{cor}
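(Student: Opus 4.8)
The plan is to deduce both implications from the equivalence \Cref{thm.def.AK}~(1)~$\Leftrightarrow$~(2), transcribing Rosen's argument for \Cref{cor.Rosen.1.3}, plus a little bookkeeping to pass between ``coinciding up to finitely many primes'' and ``being a cofinite subset''. For ``$\Leftarrow$'' (a sequence of the stated type yields a Frobenian set), suppose $(a_n)_n$ is a linear recurrent sequence over $K$ with separable eigen polynomial and set $\alpha:=[(a_{q^{{\rm deg}\,P}}\mod P)_P]\in\mca{P}^0_{\mca{A}_K}$. By \Cref{thm.def.AK}~(1)~$\Rightarrow$~(2) there are a finite Galois extension $L/K$ and a map $g\colon{\rm Gal}(L/K)\to L$ with $g(\sigma\tau\sigma^{-1})=\sigma(g(\tau))$ such that $\alpha={\rm ev}_L(g)$; I would put $C:=\{\sigma\in{\rm Gal}(L/K)\mid g(\sigma)=0\}$, which the relation on $g$ shows to be a union of conjugacy classes. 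Next I claim $Z:=\{P\mid a_{q^{{\rm deg}\,P}}\equiv 0\mod P\}$ and $S_{L,C}$ differ only by a finite set $E$, into which one throws the primes ramified in $L/K$, the primes $P$ lying below a prime of $L$ dividing one of the finitely many nonzero values $g(\sigma)$ (finitely many, since $g$ has finite image), and the finitely many primes at which the representatives $(a_{q^{{\rm deg}\,P}}\mod P)_P$ and $(g(\varphi_P)\mod P)_P$ of $\alpha$ disagree. Indeed, for $P\notin E$ the Frobenius class of $P$ is well defined (\Cref{rem.Frobev.AK}), and $g(\varphi_P)\equiv 0\mod P$ holds exactly when that class lies in $C$: if it does, $g$ vanishes on it; if not, $g(\varphi_P)$ is a nonzero element of $L$ which is coprime to $P$ by the choice of $E$. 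Hence $Z$ is Frobenian, and any cofinite subset $S$ of $Z$ is Frobenian too, since removing finitely many primes from a Frobenian set leaves a Frobenian set.

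For ``$\Rightarrow$'', let $S$ be Frobenian, so that $S$ coincides off a finite set with $S_{L,C}$ for some finite Galois $L/K$ with group $G$ and union of conjugacy classes $C\subseteq G$. I would take $g:=\mathbf{1}_{G\setminus C}\colon G\to\{0,1\}\subseteq K\subseteq L$, the indicator function of the complement of $C$; since $C$ is conjugation-invariant and the values of $g$ lie in $K=L^G$, the relation $g(\sigma\tau\sigma^{-1})=\sigma(g(\tau))$ holds trivially, so $\alpha:={\rm ev}_L(g)\in\mca{P}^0_{\mca{A}_K}$ by \Cref{thm.def.AK}~(2). By construction $g(\varphi_P)\equiv 0\mod P$ iff the Frobenius class of $P$ lies in $C$, so the zero set of $\alpha$ agrees with $S_{L,C}$ up to the finitely many ramified primes. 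Now \Cref{thm.def.AK}~(2)~$\Rightarrow$~(1) produces a linear recurrent sequence $(a_n)_n$ over $K$ with separable eigen polynomial and $\alpha=[(a_{q^{{\rm deg}\,P}}\mod P)_P]$; hence $Z:=\{P\mid a_{q^{{\rm deg}\,P}}\equiv 0\mod P\}$ has finite symmetric difference with $S_{L,C}$, hence with $S$. To upgrade this to ``$S$ is a cofinite subset of $Z$'', I replace $(a_n)_n$ by $(c\,a_n)_n$ with $c:=\prod_{P'\in S\setminus Z}P'\in R$: this satisfies the same linear recurrence, hence still has a separable eigen polynomial, and since each $R/(P)$ is a field its zero set is $Z\cup(S\setminus Z)\supseteq S$, whose complement in it is $Z\setminus S$ and is therefore finite.

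Both directions are largely mechanical once \Cref{thm.def.AK} is granted; the step I expect to require the most care is the finiteness bookkeeping in ``$\Leftarrow$'', specifically the remark that $g$ takes only finitely many values so that only finitely many primes of $R$ sit below a divisor of a nonzero $g(\sigma)$, together with the precise use of \Cref{rem.Frobev.AK} that renders ``$g(\varphi_P)\mod P$'' literally well defined for unramified $P$. I do not foresee any genuine obstacle beyond this.
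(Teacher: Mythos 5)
Your proof is correct and takes essentially the same route as the paper: both directions reduce to \Cref{thm.def.AK} (1)$\Leftrightarrow$(2), with the conjugation-stable zero set $C_g=\{\sigma\mid g(\sigma)=0\}$ in one direction, an indicator-type function in $A(L)$ in the other, and a final multiplication of $(a_n)_n$ by a suitable constant of $R$ to enlarge the zero set and get cofiniteness. The only (harmless) divergence is that you take the indicator of ${\rm Gal}(L/K)\setminus C$ where the paper says ``the characteristic function of $C$'' --- your choice is the one that makes the zero set of $g$ equal to $C$ --- and your explicit bookkeeping (only finitely many primes divide the finitely many nonzero values of $g$) just spells out what the paper leaves implicit.
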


\begin{thm} \label{thm.root.AK}
Let $\alpha\in \mca{P}^0_{\mca{A}_K}$ and let $f(x)\in K[x]$ with $f(\alpha)=0$ in $\mca{A}_K$. Then $f(x)$ has a root in $K$. 
\end{thm}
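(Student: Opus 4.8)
The plan is to reduce the statement to the Frobenius description of $\mca{P}^0_{\mca{A}_K}$ furnished by \Cref{thm.def.AK}, following Rosen's proof of \Cref{thm.Rosen1.4}. First I would apply the equivalence (1)~$\Leftrightarrow$~(2) of \Cref{thm.def.AK} to write $\alpha=[(g(\varphi_P)\bmod P)_P]$ for a finite Galois extension $L/K$ and a map $g\colon {\rm Gal}(L/K)\to L$ satisfying $g(\sigma\tau\sigma^{-1})=\sigma(g(\tau))$ for all $\sigma,\tau$. Since ${\rm Gal}(L/K)$ is finite, $g$ takes only finitely many values in $L$, each $\mf{p}$-integral for all but finitely many primes $\mf{p}$ of $L$; hence there is a finite set $\Sigma$ of monic irreducibles of $R$ outside of which $P$ is unramified in $L$, the Frobenius conjugacy class $\varphi_P$ is defined, each $g(\sigma)\bmod P$ makes sense, and---after enlarging $\Sigma$ by the primes dividing denominators of the coefficients of $f$ and the finitely many primes where $f(\alpha)=0$ fails componentwise---the congruence $f(g(\varphi_P))\equiv 0\bmod P$ holds for all $P\notin\Sigma$.

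The candidate root is $\gamma:=g(e)\in L$, where $e$ is the identity of ${\rm Gal}(L/K)$. Taking $\tau=e$ in the functional equation gives $\gamma=g(\sigma e\sigma^{-1})=\sigma(\gamma)$ for every $\sigma\in{\rm Gal}(L/K)$, so $\gamma$ is fixed by the whole Galois group and therefore lies in $K$. It remains to show $f(\gamma)=0$ in $K$, for then $\gamma$ is a root of $f$ in $K$.

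To this end, recall that by the Chebotarev density theorem (\Cref{lem.Cheb}) the set $T$ of monic irreducibles $P$ of $R$ that split completely in $L$---equivalently, the unramified $P$ whose Frobenius conjugacy class is trivial---has Dirichlet density $1/[L:K]>0$, so $T$ is infinite. For $P\in T\setminus\Sigma$, any prime of $L$ above $P$ has trivial decomposition group, so $g(\varphi_P)=g(e)=\gamma$ independently of the auxiliary choice used in \Cref{rem.Frobev.AK}, whence $f(\gamma)\equiv 0\bmod P$. Writing the fixed element $f(\gamma)\in K$ as $a/b$ with coprime $a,b\in R$, this says $P\mid a$ for infinitely many monic irreducibles $P$, which forces $a=0$; hence $f(\gamma)=0$.

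The argument is essentially formal once \Cref{thm.def.AK} and \Cref{lem.Cheb} are available, so I do not anticipate a serious obstacle; the one point demanding care is the bookkeeping of the finite exceptional set $\Sigma$---ramified primes, primes dividing denominators of the values $g(\sigma)$ and of the coefficients of $f$, and primes where $f(\alpha)=0$ fails componentwise---together with the observation that the split primes supplied by \Cref{lem.Cheb} remain infinite after $\Sigma$ is removed, which is immediate since $\Sigma$ is finite while $T$ has positive density.
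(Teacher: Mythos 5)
Your proposal is correct and follows essentially the same route as the paper's proof: invoke \Cref{thm.def.AK} (2), use the Chebotarev density theorem (\Cref{lem.Cheb}) on the trivial conjugacy class to get infinitely many $P$ with $f(g(e))\equiv 0 \bmod P$, conclude $f(g(e))=0$, and note $g(e)\in K$ from the conjugation-equivariance of $g$. Your extra bookkeeping of the exceptional set $\Sigma$ is just a more explicit version of the paper's ``for almost all $P$'s.''
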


Furthermore, we will prove a result on the density of $P$'s with $f(a_{q^{{\rm deg} P}})=0$:  

\begin{thm} \label{thm.density.AK} 
Let $f(x)\in K[x]$ be a product of separable polynomials. Then the densities satisfy 
\[ \underset{[(a_P)_P]\in \mca{P}^0_{\mca{A}_K}}{\rm sup} \delta(\{P\mid f(a_P)=0\})=
\delta(\{P\mid f(x)\text{\ has a root in\ }R/(P)\}).\] 
If $f(x)$ has no root in $K$, then no element of $\mca{P}^0_{\mca{A}_K}$ realizes the supremum. 
\end{thm}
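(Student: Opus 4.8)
The plan is to reduce the identity to a group-theoretic statement via the Frobenius--evaluation description of \Cref{thm.def.AK}~(2) and the Chebotarev density theorem, and then to realize densities arbitrarily close to the right-hand side by working over large Galois extensions built as wreath products. First I would reduce to the case that $f$ is nonconstant and separable, by replacing $f$ with its radical ${\rm rad}(f)\in K[x]$: over any field --- in particular over each $R/(P)$, over $K$, and over any finite extension of $K$ --- one has $f(a)=0$ iff ${\rm rad}(f)(a)=0$, so neither side of the claimed identity nor the condition ``$f$ has a root in $K$'' changes (this is where the hypothesis that $f$ is a product of separable polynomials is used: it ensures ${\rm rad}(f)$ is again separable). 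The inequality ``$\le$'' is then immediate, since $\{P\mid f(a_P)=0\}\subseteq\{P\mid f\text{ has a root in }R/(P)\}$ for every $(a_P)_P$.

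For the dictionary, let $L_0/K$ be the splitting field of $f$ --- a finite separable Galois extension --- with $G_0={\rm Gal}(L_0/K)$, let $\Omega\subseteq L_0$ be the set of roots of $f$ and $d=\#\Omega$, regard $G_0\hookrightarrow{\rm Sym}(\Omega)$, and put $H_r={\rm Stab}_{G_0}(r)$ for $r\in\Omega$ and $C_f=\bigcup_{r\in\Omega}H_r$, a conjugation-stable subset of $G_0$. The usual translation of the factorization of $f$ modulo $P$ into Frobenius conjugacy classes shows that, for all but finitely many $P$, $f$ has a root in $R/(P)$ iff $\varphi_P\in C_f$, so \Cref{lem.Cheb} gives $\delta(\{P\mid f\text{ has a root in }R/(P)\})=\#C_f/\#G_0=:\tau$. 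Now fix a finite Galois $L/K$ with $L\supseteq L_0$, write $G={\rm Gal}(L/K)$ and $\pi\colon G\twoheadrightarrow G_0$ for restriction, and take $\alpha={\rm ev}_L(g)=[(a_P)_P]$ for some $g\in A(L)$: for all but finitely many $P$ one has $f(a_P)=0$ iff $f(g(\varphi_P))=0$ in $L$ iff $\varphi_P\in D_g$, where $D_g:=\{\sigma\in G\mid f(g(\sigma))=0\}$; since $f\in K[x]$ and $g$ is conjugation-equivariant, $D_g$ is conjugation-stable, so $\delta(\{P\mid f(a_P)=0\})=\#D_g/\#G$. Equivariance forces $g(\sigma)\in L^{Z_G(\sigma)}$ (apply $g(\tau\sigma\tau^{-1})=\tau g(\sigma)$ with $\tau\in Z_G(\sigma)$), hence $D_g\subseteq C_f'(L):=\{\sigma\in G\mid f\text{ has a root in }L^{Z_G(\sigma)}\}$; conversely, by assigning to each conjugacy class contained in $C_f'(L)$ a root of $f$ lying in the corresponding fixed field, extending by equivariance, and sending all remaining classes to a fixed element of $K$ that is not a root of $f$, one obtains $g^{\ast}\in A(L)$ with $D_{g^{\ast}}=C_f'(L)$. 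Since every $\alpha\in\mca{P}^0_{\mca{A}_K}$ lies in ${\rm Im}\,{\rm ev}_L$ for some finite Galois $L\supseteq L_0$ (enlarge $L$ if needed), the supremum in the statement equals the supremum of $\#C_f'(L)/\#G$ over such $L$; and $\sigma\in C_f'(L)$ forces $\pi(\sigma)\in C_f$, so this is $\le\tau$.

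The core step is to push $\#C_f'(L)/\#G$ up to $\tau$. For $m$ coprime to $p$, I would take $G=(\Z/m\Z)^{\Omega}\rtimes G_0$, the wreath product in which $G_0$ permutes the $\Z/m\Z$-factors through its action on $\Omega$, realized --- using the known solvability of finite split embedding problems over the rational function field $K=\F_q(\theta)$ --- as ${\rm Gal}(L/K)$ for a finite separable Galois $L\supseteq L_0$ with ${\rm Gal}(L/L_0)=(\Z/m\Z)^{\Omega}$ and $\pi$ the natural projection. A centralizer computation in the wreath product identifies, for $\sigma=(g,\bar c)$, the image $\pi(Z_G(\sigma))$ with $\{h\in Z_{G_0}(g)\mid\sum_{\omega\in hO}c_\omega=\sum_{\omega\in O}c_\omega\text{ for every }\langle g\rangle\text{-orbit }O\subseteq\Omega\}$; so if $g\in C_f$ and the orbit sums $(\sum_{\omega\in O}c_\omega)_O$ are pairwise distinct, then $\pi(Z_G(\sigma))$ stabilizes every $\langle g\rangle$-orbit setwise and hence fixes a point fixed by $g$, so that $\sigma\in C_f'(L)$. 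Counting the tuples $\bar c$ with pairwise distinct orbit sums --- for each fixed $g\in C_f$ with $k$ orbits on $\Omega$ there are at least $m^{d}(1-\binom{d}{2}/m)$ of them --- and summing over $g\in C_f$ yields $\#C_f'(L)/\#G\ge\tau(1-\binom{d}{2}/m)$, so letting $m\to\infty$ proves the displayed equality. Finally, if some $\alpha={\rm ev}_L(g)$ attained the supremum $\tau$, then --- after enlarging $L$, which by functoriality of ${\rm ev}$ and of the Frobenius elements multiplies $D_g$ and $\#G$ by the same factor and so leaves the density unchanged --- we may assume $L\supseteq L_0$, whence $\tau=\#D_g/\#G\le\#C_f'(L)/\#G$; but when $f$ has no root in $K$ the identity never lies in $C_f'(L)$ (as $Z_G(e)=G$ and $L^{G}=K$), so $\#C_f'(L)\le\#\pi^{-1}(C_f)-1=\tau\#G-1<\tau\#G$, a contradiction.

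I expect the main obstacle to be the wreath-product step: computing $\pi(Z_G(\sigma))$ precisely enough to see that generic coordinate tuples $\bar c$ force $\sigma\in C_f'(L)$ whenever $\pi(\sigma)\in C_f$, and then carrying out --- uniformly in $g$ --- the elementary count of such $\bar c$. Two further points, each of which I expect to be routine, are the realization of the wreath product as a Galois group over $K$ compatibly with $L_0$ (via the solution of split embedding problems over rational function fields) and the functoriality of the Frobenius-evaluation map of \Cref{rem.Frobev.AK} under enlarging $L$.
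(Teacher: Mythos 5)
Your argument is correct and reaches the same two pillars as the paper --- the translation of both densities into Chebotarev densities of conjugation-stable subsets of a Galois group (your $C_f$ and $C_f'(L)$ are exactly the sets $S_1$ and $S_2$ of the paper's Lemmas \ref{lem.Rosen.1} and \ref{lem.Rosen.2}, which the paper imports from Rosen rather than reproving), and the approximation of the target density by passing to wreath-product extensions --- but the approximation step itself is genuinely different. The paper works with the regular wreath product $((\Z/2\Z)^r)^{\#\Gamma}\rtimes\Gamma$, realizes it by an explicit Kummer/Artin--Schreier construction (\Cref{lem.wreathext}, using the Hilbert class field and Chebotarev to find principal split primes), and quotes Rosen's group-theoretic \Cref{lem.Rosen.3} (whose proof uses additive Hilbert 90) to get the bound $(1-\#\Gamma/2^r)$ with $r\to\infty$; you instead take the smaller wreath product $(\Z/m\Z)^{\Omega}\rtimes G_0$ indexed by the roots, compute $\pi(Z_G(\sigma))$ directly via the orbit-sum condition, and count tuples with pairwise distinct orbit sums to get the bound $(1-\binom{d}{2}/m)$ with $m\to\infty$. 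Your criterion is self-contained and arguably cleaner than quoting Rosen's lemma; what the paper's version buys is an elementary, fully explicit field-theoretic realization of the auxiliary extension. The one point you should repair is the realization step: the blanket statement ``finite split embedding problems over $\F_q(\theta)$ are solvable'' is not a result you can safely cite (it is known for function fields over ample fields, not for $\F_q$ itself); what you actually need is the standard theorem that finite split embedding problems with \emph{abelian} kernel over a Hilbertian field are properly solvable (Fried--Jarden), which covers your kernel $(\Z/m\Z)^{\Omega}$, or else an explicit Kummer/Artin--Schreier construction as in \Cref{lem.wreathext}. Your remaining ingredients (reduction to the radical, the iff between $f(a_P)=0$ and $f(g(\varphi_P))=0$ away from finitely many $P$, compatibility of $\mathrm{ev}_L$ under enlarging $L$ as in \Cref{lem.LL'}, and the strictness argument via the identity element when $f$ has no root in $K$) all match the paper's usage and are fine.
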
 

\begin{rem}\label{rem.t-motive}
Consider a large field $K_\infty=\F_q(\!(1/\theta)\!)$ and let $\C_\infty=\wh{\ol{K_\infty}}$ denote the completion of an algebraic closure. 
In general, a so-called \emph{effective $t$-motive} $M$ that are rigid analytically trivial admits the de Rham--Betti comparison isomorphism 
$H^0_{dR}(M)\otimes \C_\infty\congto H^0_B(M)\otimes \C_\infty$, 
and the presentation matrix yields \emph{the periods} of $M$ of positive characteristic. 
Let $\mca{P}_{\C_\infty}$ denote the $K$-subalgebra of $\C_\infty$ generated by all such periods. 
If $M$ runs through the class of so-called \emph{Artin $t$-motives}, then the $K$-span of the matrix coefficients becomes 
the separable closure $K^{\rm sep}$ of $K$ as a subring of $\mca{P}_{\C_\infty}$ (\Cref{sec.t-motive}, \Cref{prop.ksep}). 
In view of \Cref{rem.PA0.motive}, one may expect to have the following table, explaining that $\mca{P}_{\mca{A}_K}^0$ is a correct finite analogue of $K^{\rm sep}$.
\begin{center}
\begin{tabular}{|ccccc|} \hline
$K^{\rm sep}$ & $\subsetneq$ & $\mca{P}_{\C_\infty}$ & $\subsetneq$ & $\C_\infty$ \\ \hline
$\mca{P}_{\mca{A}_K}^0$ & $\subsetneq$ & $\mca{P}_{\mca{A}_K}$ & $\subsetneq$ & $\mca{A}_K$ \\ \hline
\end{tabular}
\end{center}
Exploring a suitable definition of the finite period ring $\mathcal{P}_{\mathcal{A}_K}$ in positive characteristic thus appears to be an interesting direction for future research. 
\end{rem} 

The rest of this article is organized as follows. 
In \Cref{sec.Frob}, we recall the notion of the Frobenius maps and prove the well-definedness of the Frobenius evaluation map \Cref{prop.Frobev.AK}. 
In \Cref{sec.proof.(1)iff(2)}, we prepare Lemmas \ref{bou} -- 
\ref{lem.diag-perm} to prove \Cref{thm.def.AK} (1) $\Leftrightarrow$ (2). 
In \Cref{sec.proof.(2)iff(3)}, we define the $\mca{A}_K$-valued Frobenius automorphism, prove a lemma, and complete the proof of \Cref{thm.def.AK} (2) $\Leftrightarrow$ (3). 
In \Cref{sec.thm.subset.AK}, we prove \Cref{thm.subset.AK} with several explicit constructions of nontrivial elements and polynomials. We also discuss transcendental elements in $\mca{A}$ and $\mca{A}_K$. 
In \Cref{sec.proofs.Frob/root.AK}, we derive \Cref{cor.Frob.AK} and \Cref{thm.root.AK} from \Cref{thm.def.AK} (1) $\Leftrightarrow$ (2). 
In \Cref{sec.proof.density.AK}, 
we invoke some deep results in algebraic number theory (Lemmas \ref{lem.Cheb}, \ref{lem.principal}),  
recall and refine Rosen's argument (Lemmas \ref{lem.Rosen.1} -- 
\ref{lem.wreathext}), 
and prove \Cref{thm.density.AK} on the densities. 
In \Cref{sec.t-motive}, in view of \Cref{rem.t-motive}, we briefly recall the notion of Artin $t$-motives and prove \Cref{prop.ksep}. 
Further remarks are in \Cref{sec.remarks}. 

\begin{ack}
The authors are grateful to Takumi Anzawa and Hidetaka Funakura for introducing this topic to us, 
Koji Tasaka and Shuji Yamamoto for useful information on technical aspects, 
Yoshinori Mishiba for asking an important question, 
Nadav Gropper and Yi Wang for their insight from anabelian geometry, 
Shin-ichiro Seki and Masanobu Kaneko 
for indicating Ax's paper and sharing the profound mystery of numbers. 
They were also partially assisted by ChatGPT, Grok, and Claude, particularly in locating literature for classical arguments and improving their writing. 
This work was partially supported by RIMS Int.JU/RC, Kyoto University. 
The first and the third authors have been partially supported by 
NSTC, Taiwan (Grant Number NSTC 114-2811-M-007-048) and 
JSPS KAKENHI (Grant Number JP23K12969), respectively. 

\end{ack} 


\section{Frobenius evaluation map} \label{sec.Frob} 
In this section, we recall the notion of Frobenius map and prove \Cref{prop.Frobev.AK}. 

Let the settings be as in \Cref{ss.P0AK}. 
Let $L/K$ be a finite Galois extension with $\Gamma={\rm Gal}(L/K)$ 
and let $R_L$ denote the integral closure of $R=R_K$ in $L$. 
Define an extension of $\mca{A}_K$ by 
$\mca{A}_L
=L\otimes \mca{A}_K
=(\prod_\mf{p} R_L/\mf{p})/(\bigoplus_\mf{p} R_L/\mf{p})$,
where $\mf{p}$ runs through the set of all non-zero prime ideals of $R_L$. 
We may regard $\mca{A}_K\subset \mca{A}_L$ via the natural injective homomorphism. 

\begin{dfn}[({Frobenius map \cite[Section 9]{Rosen2002GTM}})] 
Let $P$ be a monic irreducible element of $R$ and 
let $\mf{p}$ be a prime ideal of $R_L$ over $(P)$. 
Then an element $\varphi_\mf{p}\in \Gamma$ satisfying 
$\varphi_\mf{p}(x)\equiv x^{\# R/(P)}\mod \mf{p}$
is called a Frobenius map at $\mf{p}$. 
\end{dfn} 

If $\mf{p}$ is unramified in $L/K$, then $\varphi_\mf{p}$ is unique. 
Otherwise, there are several Frobenius maps, all conjugate to each other. 
Since there are only finitely many ramified primes, we have 

\begin{lem} \label{lem.Frobmfp}
For each map $g:\Gamma\to L$, 
the element $[(g(\varphi_\mf{p}) \mod \mf{p})_\mf{p}]\in \mca{A}_L$ is defined. 
\end{lem}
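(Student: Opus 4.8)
The plan is to show that for a fixed map $g\colon\Gamma\to L$ the tuple $(g(\varphi_\mf{p})\bmod\mf{p})_\mf{p}$ gives a well-defined class in $\mca{A}_L=(\prod_\mf{p}R_L/\mf{p})/(\bigoplus_\mf{p}R_L/\mf{p})$. The only ambiguity in the recipe is the choice of Frobenius map $\varphi_\mf{p}$ at each prime $\mf{p}$ of $R_L$, since when $\mf{p}$ is ramified there may be several. First I would recall that in the Dedekind domain $R_L$ there are only finitely many prime ideals $\mf{p}$ lying over primes $(P)$ that ramify in $L/K$, because the ramified primes of $K$ are exactly those dividing the (nonzero) different or discriminant of $R_L/R$, hence finite in number, and each has only finitely many primes above it. At every unramified $\mf{p}$ the Frobenius map $\varphi_\mf{p}$ is the unique element of the decomposition group with $\varphi_\mf{p}(x)\equiv x^{\#R/(P)}\bmod\mf{p}$, so no choice is involved there and $g(\varphi_\mf{p})\bmod\mf{p}\in R_L/\mf{p}$ is canonically determined.

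Next I would observe that two different systems of choices $(\varphi_\mf{p})_\mf{p}$ and $(\varphi'_\mf{p})_\mf{p}$ agree outside the finite set $T$ of primes of $R_L$ above the ramified primes of $K$. Consequently the two tuples $(g(\varphi_\mf{p})\bmod\mf{p})_\mf{p}$ and $(g(\varphi'_\mf{p})\bmod\mf{p})_\mf{p}$ in $\prod_\mf{p}R_L/\mf{p}$ differ only in the finitely many coordinates indexed by $T$; hence their difference lies in $\bigoplus_\mf{p}R_L/\mf{p}$, so they represent the same class in $\mca{A}_L$. This is exactly the point of passing to the quotient by the direct sum: the finitely many ambiguous coordinates are killed. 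Therefore the element $[(g(\varphi_\mf{p})\bmod\mf{p})_\mf{p}]\in\mca{A}_L$ depends only on $g$ and not on the chosen Frobenius maps.

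The main (and only) obstacle is the finiteness of the ramified locus; once that is granted the argument is immediate. I would state it as: the extension $R_L/R$ is separable at the generic point (indeed $L/K$ is separable, automatically so in the number-field analogue and guaranteed here since we work with Galois extensions), so its different is a nonzero ideal, whence only finitely many primes ramify; this is the standard fact from \cite[Section 9]{Rosen2002GTM}. I would close by noting that this proves \Cref{lem.Frobmfp}, and that the same reasoning applied to $\mca{A}_K$ in place of $\mca{A}_L$ will later give the well-definedness of $[(g(\varphi_P)\bmod P)_P]$ once one checks (in \Cref{prop.Frobev.AK}) that the class actually descends to $\mca{A}_K\subset\mca{A}_L$.
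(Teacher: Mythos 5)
Your argument is correct and is essentially the paper's own: the paper simply notes that $\varphi_\mf{p}$ is unique at unramified primes, that only finitely many primes ramify, and that finitely many ambiguous coordinates are killed in the quotient defining $\mca{A}_L$. Your added justification of the finiteness of the ramified locus (nonzero different since $L/K$ is separable) is the standard fact the paper implicitly invokes, so nothing is missing.
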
 

\begin{lem} \label{lem.FrobP}
Let $L/K$ be a finite Galois extension. 
Let $P$ be a monic irreducible element of $R$ and 
let $\mf{p},\mf{p}'$ be unramified prime ideals of $R_L$ over $(P)$. 
Let $g:\Gamma\to L$ be in $A(L)$. 
Regard $R/(P)\subset R_L/\mf{p}$ and $R/(P)\subset R_L/\mf{p}'$ via the natural injective homomorphisms. 
Then, we have 

{\rm (1)} $g(\varphi_\mf{p})\mod \mf{p} \in R/(P)$, 

{\rm (2)} $g(\varphi_\mf{p})\mod \mf{p}=g(\varphi_\mf{p'})\mod \mf{p}'$ in $R/(P)$. 
\end{lem}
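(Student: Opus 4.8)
The plan is to reduce both statements to two standard facts about Frobenius maps in the function-field setting (cf.~\cite[Section~9]{Rosen2002GTM}): the conjugation formula $\varphi_{\sigma(\mf{p})}=\sigma\varphi_{\mf{p}}\sigma^{-1}$ for $\sigma\in\Gamma$, and the fact that for an unramified $\mf{p}$ the reduction of $\varphi_{\mf{p}}$ modulo $\mf{p}$ is the canonical generator $x\mapsto x^{q^{{\rm deg}\,P}}$ of ${\rm Gal}\big((R_L/\mf{p})/(R/(P))\big)$, whose fixed subfield is exactly $R/(P)$. At the outset I would invoke \Cref{lem.Frobmfp} to discard the finitely many $\mf{p}$ at which some value of $g$ fails to be integral, so that $g(\varphi_{\mf{p}})\mod\mf{p}$ and $g(\varphi_{\mf{p}'})\mod\mf{p}'$ are genuine elements of $R_L/\mf{p}$ and $R_L/\mf{p}'$; this costs nothing, as the lemma is used only to define an element of $\mca{A}_L$.

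For (1), I would first substitute $\sigma=\tau=\varphi_{\mf{p}}$ into the defining relation of $g\in A(L)$ to get $g(\varphi_{\mf{p}})=g(\varphi_{\mf{p}}\varphi_{\mf{p}}\varphi_{\mf{p}}^{-1})=\varphi_{\mf{p}}(g(\varphi_{\mf{p}}))$, i.e.\ $g(\varphi_{\mf{p}})$ is fixed by $\varphi_{\mf{p}}$. Reducing modulo $\mf{p}$ and using that the reduction $R_L\to R_L/\mf{p}$ intertwines $\varphi_{\mf{p}}$ with the $q^{{\rm deg}\,P}$-power map, the class $g(\varphi_{\mf{p}})\mod\mf{p}$ is fixed by the canonical generator of ${\rm Gal}\big((R_L/\mf{p})/(R/(P))\big)$, hence lies in $R/(P)$ under the natural inclusion. (A more conceptual variant: since $\langle\varphi_{\mf{p}}\rangle$ equals the decomposition group and is cyclic, running the relation over all of its elements shows $g(\varphi_{\mf{p}})$ lies in the decomposition field, whose residue field at the prime below $\mf{p}$ is $R/(P)$ precisely because $\mf{p}$ is unramified.)

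For (2), I would use that $\Gamma$ acts transitively on the primes of $R_L$ above $(P)$ to choose $\sigma\in\Gamma$ with $\sigma(\mf{p})=\mf{p}'$; the conjugation formula then gives $\varphi_{\mf{p}'}=\sigma\varphi_{\mf{p}}\sigma^{-1}$, so the relation defining $A(L)$ yields $g(\varphi_{\mf{p}'})=g(\sigma\varphi_{\mf{p}}\sigma^{-1})=\sigma(g(\varphi_{\mf{p}}))$. Since $\sigma$ fixes $R$ and sends $\mf{p}$ to $\mf{p}'$, it induces an isomorphism $\ol{\sigma}\colon R_L/\mf{p}\congto R_L/\mf{p}'$ which restricts to the identity on $R/(P)$ and satisfies $\sigma(x)\mod\mf{p}'=\ol{\sigma}(x\mod\mf{p})$ for $\mf{p}$-integral $x$. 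Applying this to $x=g(\varphi_{\mf{p}})$ and invoking part~(1)---which places $g(\varphi_{\mf{p}})\mod\mf{p}$ in $R/(P)$, where $\ol{\sigma}$ acts trivially---gives $g(\varphi_{\mf{p}'})\mod\mf{p}'=\ol{\sigma}\big(g(\varphi_{\mf{p}})\mod\mf{p}\big)=g(\varphi_{\mf{p}})\mod\mf{p}$ in $R/(P)$, as desired.

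I do not expect a serious obstacle: once the two standard facts are in hand the argument is formal. The only point demanding care is bookkeeping of \emph{which} copy of $R/(P)$ occurs at each step---in both parts it is the image of the canonical map $R\inj R_L\to R_L/\mf{p}$ (respectively $R_L/\mf{p}'$), so the identifications used in (1) and (2) really coincide; granting that, the remainder is just the textbook behaviour of Frobenius maps under conjugation and of reduction under Galois automorphisms.
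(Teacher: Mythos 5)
Your proposal is correct and follows essentially the same route as the paper: for (1) you set $\sigma=\tau=\varphi_{\mf{p}}$ in the defining relation of $A(L)$ and use that the residue field extension is generated by the reduction of $\varphi_{\mf{p}}$ with fixed field $R/(P)$; for (2) you conjugate by $\sigma$ with $\sigma(\mf{p})=\mf{p}'$ and use $g(\varphi_{\mf{p}'})=\sigma(g(\varphi_{\mf{p}}))$. The only cosmetic difference is that you transport the class via the induced isomorphism $\ol{\sigma}\colon R_L/\mf{p}\congto R_L/\mf{p}'$ (trivial on $R/(P)$), whereas the paper picks a representative $a\in R$ with $g(\varphi_{\mf{p}})-a\in\mf{p}$ and applies $\sigma$ to it; both encode the same computation.
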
 

\begin{proof} 
(1) By $g \in A(L)$, we have $\sigma(g(\tau))=g(\sigma \tau \sigma^{-1})$ for every $\sigma, \tau \in \Gamma$. Putting $\sigma=\tau=\varphi_{\mathfrak{p}}$, we obtain $\varphi_{\mathfrak{p}}(g(\varphi_{\mathfrak{p}}))=g(\varphi_{\mathfrak{p}})$. 
By the definition of $\varphi_\mf{p}$, we have $\mf{p}^{\varphi_\mf{p}}=\mf{p}$, so it acts on $R_L/\mf{p}$. Let $\ol{\varphi}_\mf{p}\in {\rm Gal}((R_L/\mf{p})/(R/(P)))$ denote the corresponding element. 
Again by the definition of $\varphi_\mf{p}$, we have 
\[g(\varphi_{\mathfrak{p}}) \mod \mathfrak{p}=\ol{\varphi}_{\mathfrak{p}}(g(\varphi_{\mathfrak{p}}) \mod \mathfrak{p})=(g(\varphi_{\mathfrak{p}})\mod \mathfrak{p})^{\#(R/(P))}\]
in $R/\mf{p}$. 
Recall the natural injective homomorphism  $R/(P)\congto (R_L/\mf{p})^{\langle \ol{\varphi}_\mf{p}\rangle} \inj R_L/\mf{p}$. 
If $a\in R$ $b\in R_L$, then we have $a\mod (P)\mapsto b\mod \mf{p}$ iff $b-a \in \mf{p}$. 
Thus we have $g(\varphi_{\mathfrak{p}})\mod \mf{p} \in R/(P)$. 

(2) Since $L/K$ is Galois, we have $\sigma\in \Gamma$ with 
$\sigma\mf{p}=\mf{p}'$, so 
$\varphi_{\mf{p}'}=\sigma\varphi_\mf{p}\sigma^{-1}$. 
By $g\in A(L)$, we have 
$g(\varphi_{\mf{p}'})=g(\sigma\varphi_\mf{p}\sigma^{-1})
=\sigma(g(\varphi_\mf{p})) \in R_L$. 
By (1), we have $a\in R$ with $g(\varphi_{\mathfrak{p}}) -a \in \mf{p}$, and hence 
$\sigma(g(\varphi_{\mathfrak{p}}) -a) = g(\varphi_{\mf{p}'})-a \in \mf{p}'$. 
Namely, $g(\varphi_{\mf{p}})\mod \mf{p}$ and $g(\varphi_{\mf{p}'})\mod \mf{p}'$ coincide with the same element $a\mod (P)$ in $R/(P)$. 
\end{proof}

\begin{prop} \label{prop.Frobev.AK}
For a finite Galois extension $L/K$, 
there is a well-defined map   
\[{\rm ev}_L:A(L)\to \mca{A}_K;\ g\mapsto [(g(\varphi_P)\mod P)_P].\] 
\end{prop}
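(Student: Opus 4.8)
The plan is to deduce the well-definedness of ${\rm ev}_L$ directly from \Cref{lem.FrobP}, disposing of the finitely many exceptional primes by hand. Fix $g \in A(L)$ and let $T$ be the set of monic irreducible $P \in R$ such that either $(P)$ ramifies in $L/K$ or some prime $\mf{p}$ of $R_L$ above $(P)$ has $g(\varphi_\mf{p})$ non-integral at $\mf{p}$. First I would argue that $T$ is finite: the ramified primes divide the discriminant of $R_L/R$ and hence are finite in number, while each of the finitely many values $g(\sigma) \in L$ ($\sigma \in \Gamma$) has only finitely many poles, so only finitely many $P$ lie below a prime dividing a denominator of some $g(\sigma)$. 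Moreover, since $\varphi_{\sigma\mf{p}} = \sigma\varphi_\mf{p}\sigma^{-1}$ and $g \in A(L)$ gives $g(\varphi_{\sigma\mf{p}}) = \sigma(g(\varphi_\mf{p}))$, integrality of $g(\varphi_\mf{p})$ at $\mf{p}$ is a property of $(P)$ alone, so $T$ is well defined and finite.

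Next, for each $P \notin T$ I would pick any prime $\mf{p}$ of $R_L$ above $(P)$ and set $a_P := g(\varphi_\mf{p}) \mod \mf{p}$. By \Cref{lem.FrobP}(1) this lies in the image of $R/(P) \inj R_L/\mf{p}$, i.e. $a_P \in R/(P)$, and by \Cref{lem.FrobP}(2) it is independent of the chosen $\mf{p}$; this is precisely what makes the symbol $g(\varphi_P) \mod P$ unambiguous. For $P \in T$ I set $a_P := 0 \in R/(P)$. Then $(a_P)_P \in \prod_P R/(P)$, and because $T$ is finite the class $[(a_P)_P] \in \mca{A}_K$ does not change if the $a_P$ with $P \in T$ are altered, so in particular it depends only on $g$. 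I then define ${\rm ev}_L(g) := [(a_P)_P] = [(g(\varphi_P)\mod P)_P]$.

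To match the description in \Cref{rem.Frobev.AK}, I would finally check that the image of ${\rm ev}_L(g)$ under the natural injection $\mca{A}_K \inj \mca{A}_L$ equals $[(g(\varphi_\mf{p})\mod \mf{p})_\mf{p}]$ from \Cref{lem.Frobmfp}: for every $\mf{p}$ above some $P \notin T$, the $\mf{p}$-component $g(\varphi_\mf{p})\mod\mf{p}$ is by construction the image of $a_P$ under $R/(P)\inj R_L/\mf{p}$, and only finitely many $\mf{p}$ lie above $T$, so the two families of coordinates agree outside a finite set. Hence the two elements coincide in $\mca{A}_L$.

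The hard part is essentially already contained in \Cref{lem.FrobP}; beyond that, the only point requiring genuine care is confirming that the exceptional set $T$ is finite and that ``badness'' attaches to $(P)$ rather than to an individual prime $\mf{p}$ above it. Once this is in place, the remainder is the formal observation that a class in $\mca{A}_K$ is insensitive to altering finitely many coordinates.
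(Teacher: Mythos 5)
Your proposal is correct and follows essentially the same route as the paper: the paper's proof likewise reduces everything to \Cref{lem.FrobP} (descent of each coordinate from $\prod_{\mf{p}\mid(P)}R_L/\mf{p}$ to $R/(P)$) together with \Cref{lem.Frobmfp}, with the finitely many ramified/denominator primes absorbed by the quotient defining $\mca{A}_K$. Your explicit treatment of the exceptional set $T$ and of the compatibility with the injection $\mca{A}_K\inj\mca{A}_L$ only spells out details the paper leaves implicit.
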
 

\begin{proof} 
By \Cref{lem.FrobP}, the element $(g(\varphi_\mf{p}) \mod \mf{p})_{\mf{p}\mid (P)}\in \prod_{\mf{p}\mid (P)}R_L/\mf{p}$ lives in the image of $R/(P)$. 
So we may let $g(\varphi_P) \mod P$ denote the corresponding element in $R/(P)$ and call it the value of the Frobenius at $P$.
Accordingly, the element $[(g(\varphi_\mf{p}) \mod \mf{p})_\mf{p}]\in \mca{A}_L$ in \Cref{lem.Frobmfp} lives in the image of $\mca{A}_K$, so we may write $[(g(\varphi_P) \mod P)_P]\in \mca{A}_K$. 
\end{proof} 

\section{Proof of \Cref{thm.def.AK} (1) $\Leftrightarrow$ (2)}  \label{sec.proof.(1)iff(2)} 


\subsection{Preliminaries}

Let the setting be as in \Cref{ss.P0AK}.
Let $L/K$ be a finite Galois extension with $\Gamma={\rm Gal}(L/K)$ and put $d=[L:K]=\#\Gamma$. 
Let $L\otimes L$ denote the tensor product over $K$. 
The following deep assertion will play a key role.
\begin{lem}[(cf. Bourbaki, {\cite[Chapter V, Section 10, 4, Corollary of Proposition 8]{Bourbaki.AlbegraII}})]\label{bou}
The canonical $K$-linear map 
\[\psi: L\otimes L\to {\rm Map}(\Gamma, L);\ \sum_i x_i\otimes y_i \mapsto (\tau\mapsto \sum_i x_i\tau(y_i))\] 
is bijective. 
\end{lem}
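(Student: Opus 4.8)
The plan is to prove that $\psi$ is a $K$-linear isomorphism between two $K$-vector spaces of the same finite dimension $d^2$, so it suffices to show that $\psi$ is surjective (equivalently injective). Both $L \otimes_K L$ and ${\rm Map}(\Gamma, L)$ have $K$-dimension $d^2$: the former because $\dim_K L = d$, the latter because $\#\Gamma = d$ and each value lies in an $L \cong K^d$. So the crux is to establish that $\psi$ is injective, or to exhibit enough elements in the image to span the target.

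First I would recall that $L/K$ is a finite Galois extension, so $L \otimes_K L \cong \prod_{\tau \in \Gamma} L$ as $L$-algebras, via the map sending $x \otimes y \mapsto (x \cdot \tau(y))_{\tau \in \Gamma}$; this is the standard "Galois descent" / "splitting of the diagonal" statement, and it is exactly the content of the cited Bourbaki corollary (a Galois extension is étale, and base-changing an étale algebra to a splitting field diagonalizes it). Under this identification, the map $\psi$ becomes, essentially tautologically, the identification of $\prod_{\tau} L$ with ${\rm Map}(\Gamma, L)$, which is manifestly bijective. So the honest content of the lemma is precisely this structural fact about $L \otimes_K L$.

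If one wants a more self-contained route rather than quoting Bourbaki, I would argue injectivity directly: extend scalars along $K \hookrightarrow L$ to get $\psi_L \colon (L \otimes_K L) \otimes_K L \to {\rm Map}(\Gamma, L) \otimes_K L = {\rm Map}(\Gamma, L \otimes_K L)$, and note $\psi$ is injective iff $\psi_L$ is (faithful flatness of $L/K$). Now $(L \otimes_K L) \otimes_K L \cong L \otimes_K (L \otimes_K L)$, and using the normal basis theorem (or just the primitive element theorem plus Vandermonde) one checks that $L \otimes_K L$, after the further base change, splits as a product indexed by $\Gamma$; concretely, pick a basis $y_1, \dots, y_d$ of $L/K$ and observe that the matrix $(\tau_j(y_i))_{i,j}$, with $\Gamma = \{\tau_1, \dots, \tau_d\}$, is invertible over $L$ by linear independence of characters (Dedekind/Artin), which forces $\psi$ to be surjective onto a spanning set and hence bijective by dimension count.

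The main obstacle is not any computation but pinning down the cleanest justification that $L \otimes_K L$ decomposes as a product of $d$ copies of $L$ indexed by $\Gamma$ — i.e., deciding whether to invoke the Bourbaki reference as a black box (the path the paper has chosen, flagging it as the "deep assertion") or to reprove it via linear independence of automorphisms. Once that decomposition is in hand, identifying $\psi$ with the resulting coordinate isomorphism and reading off bijectivity is routine. I would therefore present the short proof: cite Bourbaki for the isomorphism $L \otimes_K L \xrightarrow{\sim} \prod_{\tau \in \Gamma} L$, observe that $\psi$ is the composite of this with the obvious bijection $\prod_{\tau \in \Gamma} L \cong {\rm Map}(\Gamma, L)$, and conclude.
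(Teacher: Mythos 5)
Your proposal is correct and coincides with the paper's treatment: the paper gives no independent argument either, simply quoting the Bourbaki corollary, of which $\psi$ is exactly the canonical isomorphism $L\otimes_K L \congto \prod_{\tau\in\Gamma} L \cong {\rm Map}(\Gamma,L)$. Your optional self-contained route (noting that $\psi$ is $L$-linear in the first factor with matrix $(\tau(y_i))_{\tau,i}$ in the bases $\{1\otimes y_i\}$ and $\{\delta_\tau\}$, invertible by Dedekind's linear independence of characters) is also sound, but is not needed beyond what the paper does.
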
 

\begin{lem} \label{change} \label{lem.GinvAL}
Via Bourbaki's canonical isomorphism $\psi:L\otimes L \congto {\rm Map}(\Gamma,L)$, 
the $\Gamma$-invariant subset $(L\otimes_K L)^\Gamma$ with respect to the diagonal action corresponds to the subset 
\[A(L)=\{g\in{\rm Map}(\Gamma,L)\mid g(\sigma\tau\sigma^{-1})=\sigma g(\tau)\ \text{for every}\ \sigma,\tau\in \Gamma\}.\]
\end{lem}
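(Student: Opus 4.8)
The plan is to unwind both sides of Bourbaki's isomorphism $\psi$ explicitly and compare the conditions cut out by the diagonal $\Gamma$-action on $L\otimes_K L$ with the cocycle-type condition defining $A(L)$. First I would fix the diagonal action on $L\otimes_K L$: for $\sigma\in\Gamma$ and $\xi=\sum_i x_i\otimes y_i$, set $\sigma\cdot\xi=\sum_i \sigma(x_i)\otimes\sigma(y_i)$. Then I would chase this through $\psi$. Since $\psi(\xi)$ is the map $\tau\mapsto\sum_i x_i\tau(y_i)$, a direct computation gives $\psi(\sigma\cdot\xi)(\tau)=\sum_i\sigma(x_i)\,\tau(\sigma(y_i))=\sigma\!\big(\sum_i x_i\,(\sigma^{-1}\tau\sigma)(y_i)\big)=\sigma\big(\psi(\xi)(\sigma^{-1}\tau\sigma)\big)$. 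In other words, under $\psi$ the diagonal $\Gamma$-action on $L\otimes_K L$ transports to the action on $\mathrm{Map}(\Gamma,L)$ given by $(\sigma\star g)(\tau)=\sigma\big(g(\sigma^{-1}\tau\sigma)\big)$.

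Once this transported action is in hand, the rest is a formal equivalence. An element $g\in\mathrm{Map}(\Gamma,L)$ is fixed by this action, i.e. $\sigma\star g=g$ for all $\sigma$, exactly when $g(\tau)=\sigma\big(g(\sigma^{-1}\tau\sigma)\big)$ for all $\sigma,\tau\in\Gamma$. Substituting $\tau\mapsto\sigma\tau\sigma^{-1}$ (a bijection of $\Gamma$ for each fixed $\sigma$) turns this into $g(\sigma\tau\sigma^{-1})=\sigma\big(g(\tau)\big)$ for all $\sigma,\tau$, which is precisely the condition defining $A(L)$. Hence $\psi$ carries $(L\otimes_K L)^\Gamma$ bijectively onto $A(L)$, using that $\psi$ is already a $K$-linear bijection by \Cref{bou} and that it is $\Gamma$-equivariant for the two actions just identified.

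The only genuine point requiring care — and the step I would present most carefully — is the bookkeeping of which variable $\sigma$ acts on inside $\tau\mapsto\sum_i x_i\tau(y_i)$: the action on the ``$x$'' slot comes out as plain multiplication by $\sigma(x_i)$, while the action on the ``$y$'' slot gets absorbed into a conjugation of the evaluation point $\tau$. Getting the direction of the conjugation right (i.e. $\sigma^{-1}\tau\sigma$ versus $\sigma\tau\sigma^{-1}$) is where a sign-type error could creep in, so I would write out the one-line computation of $\psi(\sigma\cdot\xi)(\tau)$ in full and then note that the reindexing $\tau\leftrightarrow\sigma\tau\sigma^{-1}$ matches it to the stated form of $A(L)$. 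Everything else is routine: $\psi$ being a bijection is quoted from \Cref{bou}, and taking $\Gamma$-invariants on both sides of an equivariant bijection is automatic.
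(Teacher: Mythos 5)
Your proposal is correct and is essentially the paper's own argument: the central computation $\psi(\sigma\cdot\xi)(\tau)=\sigma\bigl(\psi(\xi)(\sigma^{-1}\tau\sigma)\bigr)$ is just a re-indexed form of the identity $\psi(a)(\sigma\tau\sigma^{-1})=\sigma\,\psi(\sigma^{-1}a)(\tau)$ that the paper establishes, and both then conclude by using the bijectivity of $\psi$ (the paper phrases the converse via injectivity, you phrase it as matching fixed points of an equivariant bijection). No gaps; the conjugation bookkeeping you flag is handled correctly.
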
 

\begin{proof} Let $a=\sum_i x_i\otimes y_i \in L\otimes L$. 
Then for every $\sigma,\tau\in \Gamma$, we have 
\begin{align*}
&\psi(a)(\sigma\tau\sigma^{-1})
=\psi(\sum_i x_i\otimes y_i)(\sigma\tau\sigma^{-1})
=\sum_i x_i\cdot (\sigma\tau\sigma^{-1})y_i\\ 
&=\sigma \sum_i (\sigma^{-1}x_i)\cdot \tau(\sigma^{-1}y_i)
=\sigma \psi(\sigma^{-1}(\sum_i x_i\otimes y_i))(\tau)
=\sigma \psi(\sigma^{-1}(a))(\tau).
\end{align*}

If $a\in (L\otimes_K L)^\Gamma$, then 
$\psi(a)(\sigma\tau\sigma^{-1})=\sigma \psi(a)(\tau)$, namely, $\psi(a)\in A(L)$. 

Conversely, if $\psi(a)\in A(L)$, then 
$\sigma \psi(a)(\tau)= \psi(a)(\sigma\tau\sigma^{-1})=\sigma \psi(\sigma^{-1}a)(\tau)$ 
implies $a=\sigma^{-1}a$ for every $\sigma\in \Gamma$, so $a\in (L\otimes L)^\Gamma$.
\end{proof}

The normal basis theorem yields 
\begin{lem} \label{tikan} \label{lem.Ginv-perm} 
There is a surjective homomorphism 
\[{\rm Tr}_{L/K}:L\otimes L\surj (L\otimes L)^\Gamma;\ \sum_\iota x_\iota\otimes y_\iota\mapsto \sum_{\sigma\in \Gamma} \sum_\iota  \sigma(x_\iota)\otimes \sigma(y_\iota).\] 
Namely, if $a\in (L\otimes L)^\Gamma$, then 
$a=\sum_i x_i\otimes y_i$ for some $(x_i)_i,(y_i)_i\in L^{d^2}$ such that 
$\Gamma$ acts on their indices in the same way. 
\end{lem}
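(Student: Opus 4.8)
The plan is to prove the lemma in three steps: (i) check that $\mathrm{Tr}_{L/K}$ is a well-defined $K$-linear map whose image lies in $(L\otimes L)^\Gamma$; (ii) prove that it surjects onto $(L\otimes L)^\Gamma$ by an averaging argument which crucially uses separability of $L/K$; and (iii) read off the refined ``$\Gamma$ permutes the indices of $(x_i)_i$ and $(y_i)_i$ in the same way'' assertion from the normal basis theorem.

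For step (i), each $\sigma\in\Gamma$ acts $K$-linearly on $L$, so $\sigma\otimes\sigma$ is a well-defined $K$-algebra automorphism of $L\otimes L$, whence $\mathrm{Tr}_{L/K}=\sum_{\sigma\in\Gamma}(\sigma\otimes\sigma)$ is a $K$-linear endomorphism of $L\otimes L$; applying $\tau\otimes\tau$ to $\mathrm{Tr}_{L/K}(a)$ and re-indexing the sum over $\Gamma$ by $\sigma\mapsto\tau\sigma$ gives $(\tau\otimes\tau)\bigl(\mathrm{Tr}_{L/K}(a)\bigr)=\mathrm{Tr}_{L/K}(a)$ for every $\tau\in\Gamma$, so the image lies in $(L\otimes L)^\Gamma$. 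For step (ii), I would use that $L/K$, being Galois, is separable, so the field trace $\mathrm{Tr}_{L/K}\colon L\to K$, $x\mapsto\sum_{\sigma\in\Gamma}\sigma(x)$, is a nonzero $K$-linear map and hence surjective; fix $c\in L$ with $\sum_{\sigma\in\Gamma}\sigma(c)=1$. Given $a\in(L\otimes L)^\Gamma$, set $b=(c\otimes 1)\cdot a$, using the left $L$-module (i.e.\ algebra-multiplication) structure on $L\otimes L$; then, since each $\sigma\otimes\sigma$ is an algebra automorphism fixing $a$,
\[
\mathrm{Tr}_{L/K}(b)=\sum_{\sigma\in\Gamma}(\sigma(c)\otimes 1)\cdot a=\Bigl(\sum_{\sigma\in\Gamma}\sigma(c)\otimes 1\Bigr)\cdot a=(1\otimes 1)\cdot a=a,
\]
so $a$ lies in the image of $\mathrm{Tr}_{L/K}$.

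For step (iii), fix a normal basis $e_1,\dots,e_d$ of $L/K$; then $\{1\otimes e_j\}_{j=1}^{d}$ is a basis of $L\otimes L$ as a free left $L$-module, so the $b$ above can be written $b=\sum_{j=1}^{d}x_j\otimes e_j$ with $x_j\in L$, and hence $a=\mathrm{Tr}_{L/K}(b)=\sum_{\sigma\in\Gamma}\sum_{j=1}^{d}\sigma(x_j)\otimes\sigma(e_j)$. Re-indexing by the pairs $(\sigma,j)\in\Gamma\times\{1,\dots,d\}$, a set of cardinality $d^2$, and letting $\Gamma$ act on it by left translation in the first coordinate, one has $\tau\bigl(\sigma(x_j)\bigr)=(\tau\sigma)(x_j)$ and $\tau\bigl(\sigma(e_j)\bigr)=(\tau\sigma)(e_j)$, so $\Gamma$ moves the $x$-indices and the $y$-indices by the same permutation, which is exactly the ``Namely'' statement. (Equivalently, the normal basis theorem identifies $L\otimes L$ with $K[\Gamma]\otimes_K L$ carrying the diagonal $\Gamma$-action, whose invariants decompose as a direct sum of $d$ ``orbit-sum'' lines, each visibly in the image of the averaging map.) I do not anticipate a real obstacle here; the one indispensable ingredient is the surjectivity of the field trace used in step (ii), which fails for inseparable extensions --- and this is precisely why the hypotheses throughout the paper are phrased in terms of finite separable extensions.
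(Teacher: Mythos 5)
Your proof is correct, but it reaches the conclusion by a genuinely different route than the paper. The paper takes a normal basis $(\beta_j)_j$ of $L/K$ and uses the product basis $\{\beta_j\otimes\beta_k\}$ of $L\otimes L$, on which $\Gamma$ acts freely by permutations; expanding $a\in(L\otimes L)^\Gamma$ in this basis, invariance forces the coefficients to be constant along $\Gamma$-orbits, and this single observation yields both surjectivity (a sum of one weighted basis vector per orbit is a preimage) and the tuples $(x_i)_i,(y_i)_i$ of length $d^2$ with the common index action. You instead prove surjectivity by the standard averaging trick: separability (automatic here, since $L/K$ is Galois) gives $c\in L$ with $\sum_{\sigma\in\Gamma}\sigma(c)=1$, and $b=(c\otimes 1)a$ is then an explicit preimage because each $\sigma\otimes\sigma$ is a ring automorphism fixing $a$; afterwards you expand $b$ over the free left $L$-module basis $\{1\otimes e_j\}_j$ and index the resulting expression by $\Gamma\times\{1,\dots,d\}$, of cardinality $d^2$, with $\Gamma$ acting by left translation in the first coordinate, which gives exactly the required statement $\tau(x_i)=x_{\tau(i)}$, $\tau(y_i)=y_{\tau(i)}$. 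Both arguments are sound; yours has the merit of producing an explicit preimage and isolating where separability enters, while the paper's makes the free $\Gamma$-action on a $d^2$-element index set completely transparent. One small remark: in your step (iii) the normal basis theorem is not actually needed --- any $K$-basis $e_1,\dots,e_d$ of $L$ works, since your index action only translates the $\Gamma$-coordinate and never requires $\sigma(e_j)$ to be again a basis vector --- whereas in the paper's argument the normal basis (i.e.\ the free permutation action on the product basis) is the essential ingredient.
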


\begin{proof} 
Take a normal $K$-basis $(\beta_j)_{j\in J}$ of $L$, so 
$\sigma \in \Gamma$ regularly acts on the index set $J=\{1,2,\ldots, d\}$ via $\sigma(\beta_j)=\beta_{\sigma(j)}$. 
Take a $K$-basis $(\gamma_i)_{i\in I}$ of $L\otimes L$ with $I=\{1,2,\ldots, d^2\}$ such that $\{\gamma_i\mid i\in I\}=\{\beta_j\otimes \beta_k\mid j,k\in J\}$.  
Then $\Gamma$ freely acts on $I$ via $\sigma(\gamma_i)=\gamma_{\sigma(i)}$. 
Write $a=\sum_i s_i \gamma_i \in (L\otimes_K L)^\Gamma$ 
with $s_i\in K$. 
Then, for every $\sigma\in \Gamma$, we have 
$\sum_i s_i \gamma_i=a=\sigma(a)=\sum_i s_i \sigma(\gamma_i)=\sum_i s_i \gamma_{\sigma(i)}=\sum_i s_{\sigma^{-1}(i)} \gamma_i$, hence 
$s_i=s_{\sigma^{-1}(i)}$ for every $i$. 
Let $\iota$ run through a complete system of representatives of the family $I/\Gamma$ of $\Gamma$-orbits of $i$'s. 
Then we have $a=\sum_\iota s_\iota\sum_{\sigma\in \Gamma} \gamma_{\sigma(\iota)}$. 

For each $i\in I$, define $j_i,k_i\in J$ by $\gamma_i=\beta_{j_i}\otimes \beta_{k_i}$. 
If $i\in \Gamma (\iota)$ with $s_\iota\neq 0$, put $(x_i,y_i)=(s_\iota \beta_{j_i},\beta_{k_i})$. 
Otherwise, put $(x_i,y_i)=(0,0)$. 
Then $a=\sum_i x_i\otimes y_i$. 
Moreover, for each $i\in \Gamma(\iota)$, we have 
$\sigma(x_i)\otimes \sigma(y_i)
=\sigma(x_i\otimes y_i)
=\sigma(s_\iota \gamma_i)
=s_\iota \gamma_{\sigma(i)}
=x_{\sigma(i)}\otimes y_{\sigma(i)}$. 
Since $\Gamma$ freely acts on non-zero $x_i\otimes y_i$'s, we obtain  
$\sigma(x_i)=x_{\sigma(i)}$ and $\sigma(y_i)=y_{\sigma(i)}$ for every $i\in I$.  
Namely, 
$\Gamma$ acts on the indices of $(x_i)_i$ and $(y_i)_i$ in the same way. 
\end{proof}

A version of Hilbert's Satz 90 yields the following. 

\begin{lem} \label{change} \label{lem.diag-perm} 
Let $S\in {\rm GL}_l(K)$ be diagonalizable over $L$ with the (not necessarily distinct) eigenvalues $\lambda_1,\ldots, \lambda_l \in L$ 
and let $\bm{u}=(u_i)_i, \bm{v}=(v_i)_i\in K^l$. 
Then there exist $\bm{b}=(b_i)_i \in L^l$ such that 
\[\bm{u}^{\!\top}S^n\bm{v}=\sum_i b_i \lambda_i^n\]
for every $n\in \Z$ 
and that $\Gamma={\rm Gal}(L/K)$ acts on the set $\{(b_i,\lambda_i)\mid i\}$ 
of pairs. 
\end{lem}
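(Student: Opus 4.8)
The plan is to realize $\bm{u}^\top S^n\bm{v}$ through the Lagrange interpolation idempotents of $S$ over $L$ and then read off how $\Gamma$ permutes the coefficients they produce. Let $\mu_1,\dots,\mu_r\in L$ be the \emph{distinct} eigenvalues of $S$ and set $\pi_j=\prod_{k\ne j}(S-\mu_k I)(\mu_j-\mu_k)^{-1}\in {\rm M}_l(L)$. Since $S$ is diagonalizable over $L$, its minimal polynomial over $L$ is $\prod_j(x-\mu_j)$, so the $\pi_j$ are the spectral projectors of $S$: $\pi_j\pi_k=\delta_{jk}\pi_j$, $\sum_j\pi_j=I$ and $S\pi_j=\mu_j\pi_j$. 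As $S\in {\rm GL}_l(K)$ we have $\mu_j\ne 0$ for all $j$, hence $\sum_j\mu_j^{-1}\pi_j$ is a two-sided inverse of $S$, and an immediate induction gives $S^n=\sum_{j=1}^r\mu_j^n\pi_j$ for every $n\in\Z$. Putting $c_j:=\bm{u}^\top\pi_j\bm{v}\in L$, we obtain $\bm{u}^\top S^n\bm{v}=\sum_{j=1}^r c_j\mu_j^n$ for all $n\in\Z$.

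Next I would chase the Galois action. Letting $\sigma\in\Gamma$ act entrywise on matrices over $L$, we have $\sigma(S)=S$ because $S\in {\rm M}_l(K)$; since the characteristic polynomial $\chi_S$ lies in $K[x]$, the element $\sigma$ permutes its distinct roots, say $\sigma(\mu_j)=\mu_{\rho_\sigma(j)}$ for a permutation $\rho_\sigma$ of $\{1,\dots,r\}$, and the eigenvalue multiplicity satisfies $m_j=m_{\rho_\sigma(j)}$ (both equal the multiplicity of $\mu_j$ as a root of $\chi_S$). The explicit product formula for $\pi_j$ then yields $\sigma(\pi_j)=\pi_{\rho_\sigma(j)}$, whence $\sigma(c_j)=\bm{u}^\top\sigma(\pi_j)\bm{v}=c_{\rho_\sigma(j)}$, using that $\bm{u},\bm{v}\in K^l$ are fixed by $\sigma$. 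Hence the diagonal action $\sigma\cdot(c,\mu)=(\sigma(c),\sigma(\mu))$ permutes the finite set $\{(c_j,\mu_j)\mid 1\le j\le r\}$.

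Finally I would re-index to the form demanded by the statement. For each $j$ choose one index $i_j$ among the $m_j$ indices $i$ with $\lambda_i=\mu_j$, and set $b_{i_j}:=c_j$ and $b_i:=0$ for the remaining such $i$; this defines $\bm{b}=(b_i)_i\in L^l$. Then $\sum_{i:\,\lambda_i=\mu_j}b_i=c_j$, so $\sum_i b_i\lambda_i^n=\sum_j c_j\mu_j^n=\bm{u}^\top S^n\bm{v}$ for every $n\in\Z$. Moreover $\{(b_i,\lambda_i)\mid 1\le i\le l\}=\{(c_j,\mu_j)\mid j\}\cup\{(0,\mu_j)\mid m_j\ge 2\}$, and each of these two subsets is stable under the diagonal $\Gamma$-action: the first by the previous paragraph, and the second because $\sigma$ maps $(0,\mu_j)$ to $(0,\mu_{\rho_\sigma(j)})$ and $m_{\rho_\sigma(j)}=m_j\ge 2$. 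Therefore $\Gamma$ acts on $\{(b_i,\lambda_i)\mid i\}$, which is the assertion.

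I do not expect a real obstacle: once the idempotents $\pi_j$ are in place the rest is bookkeeping, the only points worth a remark being the case $n<0$ (handled by the invertibility of $S$) and the harmless coincidence $(c_j,\mu_j)=(0,\mu_j)$ when $c_j=0$. The one place where extra care could be wanted is if the conclusion were to be read with $\Gamma$ permuting the \emph{index set} $\{1,\dots,l\}$ itself rather than merely stabilizing the set of pairs; obtaining that stronger form calls for choosing the distinguished indices $i_j$ compatibly along each $\Gamma$-orbit of eigenvalues (which is consistent, since within such an orbit all $\mu_j$ share the same multiplicity and the $c_j$ are Galois-conjugate), and it is there that a Hilbert~90 (normal-basis) argument would naturally replace the projector computation. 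For the statement exactly as written, stability of the pair set is what is claimed, and the above establishes it.
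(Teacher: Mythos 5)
Your proof is correct, and it takes a genuinely different route from the paper. The paper's proof invokes Hilbert's Satz 90 ($H^1(\Gamma_i,{\rm GL}_{l_i}(L))=0$ for $\Gamma_i={\rm Gal}(L/K(\lambda_i))$) to pick a basis of each eigenspace fixed by $\Gamma_i$, so that the diagonalizing matrix $V$ satisfies $\sigma(V)=VP_\sigma$ for permutation matrices $P_\sigma$; writing $\bm{u}^{\!\top}V=(u_i')^{\!\top}$, $V^{-1}\bm{v}=(v_i')$ and $b_i=u_i'v_i'$ then yields the \emph{indexed} equivariance $\sigma(b_i)=b_{\sigma(i)}$, $\sigma(\lambda_i)=\lambda_{\sigma(i)}$ for an action of $\Gamma$ on $\{1,\dots,l\}$. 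You instead use the spectral projectors $\pi_j$ attached to the distinct eigenvalues, which is more elementary (no Hilbert 90, no choice of eigenbasis), handles $n<0$ cleanly via invertibility of $S$, and makes the Galois equivariance of the $\pi_j$, the $c_j$ and the multiplicities immediate from the product formula. The price is that your $\bm{b}$ only enjoys stability of the set (in fact multiset) of pairs $\{(b_i,\lambda_i)\}$ rather than an action on the index set. That matches the statement as literally phrased, and it also suffices for the way the lemma is used in the proof of \Cref{thm.def.AK} (1) $\Rightarrow$ (2), where what is needed is $\sum_i b_i\otimes\lambda_i\in(L\otimes L)^\Gamma$: your construction delivers this because $\sum_i b_i\otimes\lambda_i=\sum_j c_j\otimes\mu_j$ and $\sigma$ permutes the pairs $(c_j,\mu_j)$ — though note that set stability alone would not imply this in general, so it is your explicit construction, not the bare conclusion, doing the work there. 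If one wants the stronger index action that the paper's argument provides (and implicitly feeds into \Cref{lem.Ginv-perm} and \Cref{lem.GinvAL}), your closing remark is the right fix: choose the distinguished indices $i_j$ compatibly along each $\Gamma$-orbit of eigenvalues, or fall back on the Hilbert 90 eigenbasis argument.
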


\begin{proof} 
Put $D=\pmx{\lambda_1&&\\&\ddots&\\&&\lambda_l}$. 
For each $\lambda_i$ with multiplicity $l_i$, say $\lambda_i=\lambda_{i+1}=\cdots=\lambda_{i+l_i-1}$, consider the subextension $L/K(\lambda_i)$ with $\Gamma_i={\rm Gal}(L/K(\lambda_i))$. 
By a version of Hilbert's Satz 90 \cite[Chapter 10, Proposition 3]{SerreGTM67} 
asserting $H^1(\Gamma_i,{\rm GL}_{l_i}(L))=0$, 
the eigenspace $V_i\cong L^{l_i}$ of $\lambda_i$ has an $L$-basis 
$(\bm{v}_i, \bm{v}_{i+1},\ldots, \bm{v}_{i+l_i-1})$ 
whose elements are fixed by $\Gamma_i$. 

Now we have $V=(\bm{v}_j)_j\in {\rm GL}_l(L)$ with $S=VDV^{-1}$. 
Take an action $\sigma \in \Gamma\act \{1,2,\ldots, l\}$ such that $\sigma(\lambda_i)=\lambda_{\sigma(i)}$. 
Since conjugate eigenspaces have the same dimensions, we may assume that  
$\sigma\in \Gamma$ acts on $\bm{v}_j$'s via $\sigma(\bm{v}_j)=\bm{v}_{\sigma(j)}$, so $\sigma(V)=(\bm{v}_{\sigma(j)})_j=VP_\sigma$ with a permutation matrix $P_\sigma\in {\rm GL}_l(\Z)$. 

To have $\bm{u}^{\!\top}S\bm{v}=\bm{u}^{\!\top}VDV^{-1}\bm{v}=\sum_i b_i\lambda_i$, it suffices to write 
\[\bm{u}^{\!\top}V=(u_i')_i^{\!\top}, \ V^{-1}\bm{v}=(v_i')_i\] 
and put $b_i=u_i'v_i'$. 
This $\bm{b}=(b_i)_i$ satisfies the desired conditions. Indeed, 
the equality $\bm{u}^{\!\top}S^n\bm{v}=\sum_i b_i \lambda_i^n$ is obvious. 
In addition, by 
\begin{align*}
&\sigma((u_i')_i^{\!\top})=\sigma(\bm{u}^{\!\top}V)=\bm{u}^{\!\top}VP_\sigma=(u_{\sigma(i)}')_i^{\!\top},\\ 
&\sigma((v_i')_i)=\sigma(V^{-1}\bm{v})=P_\sigma^{-1}V^{-1}\bm{v}=P_\sigma^{\top}V^{-1}\bm{v}=(v_{\sigma(i)}')_i,
\end{align*}  
we obtain 
$\sigma(b_i)=\sigma(u_i'v_i')=u_{\sigma(i)}'v_{\sigma(i)}'=b_{\sigma(i)}$.  
\end{proof} 

\subsection{Main proof} 

\begin{proof}[Proof of {\rm \Cref{thm.def.AK} (1) $\Leftrightarrow$ (2)}] \ 

\subsubsection*{\ul{{\rm (1) $\Rightarrow$ (2)}}}    
Suppose that $\alpha\in \mca{A}_K$ is represented by a linear recurrent sequence $(a_n)_n$ over $K$ with a separable eigen polynomial of length $l+1$, say, 
$a_n=c_{1}a_{n-1}+\cdots +c_{l}a_{n-l}$ with $c_i\in K$, $c_{l}\neq 0$. 
Consider the companion matrix 
$C=\spmx{c_1&c_2&\cdots& c_{l-1}&c_l\\
1&0&\cdots &0&0\\
0&1&\cdots &0&0\\
\vdots&\vdots&\ddots&\vdots&\vdots\\
0&0&\cdots&1&0}$. 
Then we have 
$(a_{n+1-i})_i=C(a_{n-i})_i$ in $K^n$, and hence 
$a_n=\bm{u}^{\!\top}C^n\bm{v}$ for 
$\bm{u}=\bm{e}_1, \bm{v}=(a_{1-i})_i$ in $K^n$. 

By the assumption that the eigen polynomial of $C$ is the product of separable polynomials in $K[x]$, $C$ admits the Jordan--Chevalley decomposition (cf.\,\cite{CoutyEsterleZarouf2011-arXiv}), namely, 
we may write \[C=SU\]
with $S,U\in {\rm M}_l(K)$ such that $S$ is semisimple (i.e., diagonalizable), $U$ is unipotent, and $SU=US$. 

Let $P$ be a monic irreducible element in $R$ and put $d={\rm deg} P$.  
Let $d\gg 0$ so that $U^{q^d}=I_l$. Then we obtain 
\[a_{q^d}=\bm{u}^{\!\top}C^{q^d}\bm{v}
=\bm{u}^{\!\top}S^{q^d}U^{q^d}\bm{v}
=\bm{u}^{\!\top}S^{q^d}\bm{v}.\]  
Note that if in addition $d\gg0 $ so that $P$ is coprime to the denominator of every entry of $U$, $\bm{u}$, $\bm{v}$, then $a_{q^d}\mod P\in R/(P)$ makes sense. 

Again, by the assumption, the eigenvalues are in a finite Galois extension $L/K$, and $S$ is diagonalizable over $L$. 
By \Cref{lem.diag-perm}, there exists $(b_i)_i\in L^l$ such that 
\[a_{q^d}=\sum_i b_i \lambda_i^{q^d}\]
and $\Gamma={\rm Gal}(L/K)$ acts on the set $\{(b_i,\lambda_i)\mid i\}$ of pairs. 
Hence $\sum_i b_i\otimes \lambda_i\in (L\otimes L)^\Gamma$, 
and \Cref{lem.GinvAL} implies that $g:=\psi(\sum_i b_i\otimes \lambda_i)\in A(L)$. 

Suppose that $P$ is unramified in $L/K$ and coprime to the denominator of the image of $g$. 
Then, for every prime ideal $\mf{p}$ of $L$ over $(P)$, we have 
\[g(\varphi_\mf{p})=\sum_i b_i \varphi_\mf{p}(\lambda_i)\equiv \sum_i b_i \lambda_i^{q^d} \mod \mf{p}.\]
By \Cref{prop.Frobev.AK}, we may write $a_{q^d}\equiv g(\varphi_P)\mod P$. 
Thus we obtain $\alpha=[(a_{q^{{\rm deg} P}}\mod P)_P]\in {\rm Im}\,{\rm ev}_L$. 

\subsubsection*{\ul{{\rm (2) $\Rightarrow$ (1)}}} 
Let $L/K$ be a finite Galois extension with $\Gamma={\rm Gal}(L/K)$ and let $g\in A(L)$ with $\alpha=[(g(\varphi_P)\mod P)_P]\in \mca{A}_K$. 
By \Cref{lem.GinvAL}, $\psi^{-1}(g)\in (L\otimes L)^\Gamma$. 
By \Cref{lem.Ginv-perm}, we may write $\psi^{-1}(g)=\sum_i b_i\otimes \lambda_i$ 
with $b_i,\lambda_i\in L$ such that $\Gamma$ acts on the set $\{(b_i,\lambda_i)\mid i\}$ of pairs. 
Define a sequence $(a_n)$ by $a_n=\sum_i b_i \lambda_i^n$. 
Omit $i$'s with $\lambda_i=0$ and 
put $p(t)=\prod_i (t-\lambda_i)=t^l+s_1t^{l-1}+\cdots +s_l$. 
Then the coefficients are fixed under the $\Gamma$-action, so $p(t)\in K[t]$, 
and we obtain a recurrence formula 
\[a_n+s_1a_{n-1}+\cdots + s_la_{n-l}=0\]
over $K$. 
The calculation in (1) $\Rightarrow$ (2) assures that \[a_{q^d}\equiv g(\varphi_P) \mod P \]
for every $P$ with ${\rm deg} P\gg 0$. 
\end{proof} 

\section{Proof of \Cref{thm.def.AK} (2) $\Leftrightarrow$ (3)} \label{sec.proof.(2)iff(3)}

Let the setting be as before. 
Let $L/K$ be a finite Galois extension with $\Gamma={\rm Gal}(L/K)$ and let $P$ be a monic irreducible element of $R=R_K$ unramified in $L/K$. 
Let $F_{P,L}$ denote the $q^{{\rm deg} P}$-power map on $R_L/(P)R_L=\prod_{\mf{p}\mid (P)}R_L/\mf{p}$, 
which is an automorphism of $R/(P)$-algebras. 

\begin{dfn} \label{def.AKFrob}
\emph{The $\mca{A}_K$-valued Frobenius automorphism} $F_{\mca{A}_K,L}$ is the $\mca{A}_K$-valued automorphism of $\mca{A}_L=L\otimes \mca{A}_K$ 
induced by $(F_{P,L})_P$. 
\end{dfn}

\begin{lem} \label{lem.Kspan} 
Take a $K$-basis of $L$ and consider the presentation matrix of $F_{\mca{A}_K,L}$ in $\mca{M}_n(\mca{A}_K)$. 
Then the $K$-span of the matrix coefficients coincides with the image of the Frobenius-evaluation map ${\rm Im}\,{\rm ev}_L=\{[(g(\varphi_P)\mod P)_P]\in \mca{A}_K\mid g\in A(L)\}$. 
\end{lem}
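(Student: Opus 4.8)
The plan is to compute the presentation matrix of $F:=F_{\mca{A}_K,L}$ against one convenient $K$-basis of $L$ in closed form in terms of the Frobenius maps $\varphi_\mf{p}$, to recognize each matrix entry as the image under ${\rm ev}_L$ of an explicit element of $A(L)$, and then to check that those elements span the $K$-space whose ${\rm ev}_L$-image is ${\rm Im}\,{\rm ev}_L$. Write $n=[L:K]=\#\Gamma$ and fix a $K$-basis $e_1,\dots,e_n$ of $L$ together with its trace-dual basis $e_1^\ast,\dots,e_n^\ast$ (which exists since $L/K$ is separable); the $e_i$ are also an $\mca{A}_K$-basis of $\mca{A}_L=L\otimes_K\mca{A}_K$, on which $\Gamma$ acts through the $L$-factor. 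Two preliminary points: ${\rm ev}_L$ is $K$-linear on the $K$-vector space $A(L)$, so ${\rm Im}\,{\rm ev}_L$ is a $K$-subspace of $\mca{A}_K$; and by \Cref{lem.GinvAL} the $K$-linear map $\psi$ of \Cref{bou} restricts to an isomorphism $(L\otimes_K L)^\Gamma\congto A(L)$, whence ${\rm Im}\,{\rm ev}_L=({\rm ev}_L\circ\psi)\big((L\otimes_K L)^\Gamma\big)$.

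The core is a closed formula for the coefficient $M_{ij}\in\mca{A}_K$ determined by $F(e_j\otimes1)=\sum_i M_{ij}(e_i\otimes1)$. Pairing against the trace-dual basis gives $M_{ij}={\rm Tr}_{\mca{A}_L/\mca{A}_K}\big(F(e_j)\,e_i^\ast\big)=\sum_{\sigma\in\Gamma}\sigma\big(F(e_j)\big)\,\sigma(e_i^\ast)$, where $\sigma$ denotes the action on the $L$-factor. Here I use that $F$ commutes with the $\Gamma$-action: on each factor $R_L/(P)R_L$ the map $F_{P,L}$ is raising to the $q^{{\rm deg}P}$-th power, an iterate of the $p$-power Frobenius, hence it commutes with the ring automorphism induced by any $\sigma\in\Gamma$; so $\sigma(F(e_j))=F(\sigma(e_j))$. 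Next, unwinding \Cref{def.AKFrob} together with the defining congruence $\varphi_\mf{p}(x)\equiv x^{\#R/(P)}\mod\mf{p}$, one sees that for every $\lambda\in L$ the element $F(\lambda\otimes1)\in\mca{A}_L$ equals $[(\varphi_\mf{p}(\lambda)\mod\mf{p})_\mf{p}]$ (for all but the finitely many $P$ that are ramified in $L/K$ or divide a denominator, which is all that matters in $\mca{A}_L$). Substituting and using commutativity in the residue fields gives $M_{ij}=[(\sum_{\sigma\in\Gamma}\sigma(e_i^\ast)\,\varphi_\mf{p}(\sigma(e_j))\mod\mf{p})_\mf{p}]$. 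On the other hand, put $a_{ij}:=\sum_{\sigma\in\Gamma}\sigma(e_i^\ast)\otimes\sigma(e_j)\in(L\otimes_K L)^\Gamma$ and $g_{ij}:=\psi(a_{ij})\in A(L)$; then $g_{ij}(\tau)=\sum_\sigma\sigma(e_i^\ast)\,\tau(\sigma(e_j))$, so \Cref{prop.Frobev.AK} yields exactly ${\rm ev}_L(g_{ij})=M_{ij}$.

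It remains to show that the $a_{ij}$ span $(L\otimes_K L)^\Gamma$ over $K$. For this I invoke \Cref{lem.Ginv-perm}: the $K$-linear map ${\rm Tr}_{L/K}\colon L\otimes_K L\surj(L\otimes_K L)^\Gamma$ is surjective, $\{e_i^\ast\otimes e_j\mid i,j\}$ is a $K$-basis of $L\otimes_K L$, and ${\rm Tr}_{L/K}(e_i^\ast\otimes e_j)=a_{ij}$; hence $(L\otimes_K L)^\Gamma={\rm span}_K\{a_{ij}\mid i,j\}$. Combining this with the previous paragraph, ${\rm Im}\,{\rm ev}_L=({\rm ev}_L\circ\psi)\big({\rm span}_K\{a_{ij}\mid i,j\}\big)={\rm span}_K\{M_{ij}\mid i,j\}$, which is the assertion. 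I expect the main obstacle to lie in the bookkeeping of the middle paragraph: fixing the correct normalization of the presentation matrix via the trace-dual basis, justifying the $\Gamma$-equivariance of $F$ through the product decomposition $R_L/(P)R_L=\prod_{\mf{p}\mid(P)}R_L/\mf{p}$, and above all identifying $F$ restricted to $L\subset\mca{A}_L$ with the Frobenius maps $\varphi_\mf{p}$ acting component by component, all while carrying along the harmless exclusions of ramified primes and of primes dividing denominators of the chosen bases.
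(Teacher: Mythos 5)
Your proposal is correct and follows essentially the same route as the paper: extracting the matrix coefficients via the trace form (the paper uses the abstract isomorphism $L\congto L^\vee$, you use an explicit trace-dual basis), identifying $F_{\mca{A}_K,L}$ on the $L$-factor with the componentwise Frobenius $\varphi_\mf{p}$, and then matching the coefficients with ${\rm ev}_L\circ\psi\circ{\rm Tr}_{L/K}$ applied to the basis tensors, with the spanning of $(L\otimes L)^\Gamma$ supplied by \Cref{lem.Ginv-perm} and the bijection onto $A(L)$ by \Cref{lem.GinvAL}. The only differences are cosmetic (explicit entries $M_{ij}={\rm ev}_L(g_{ij})$ versus the paper's image-of-maps formulation), so no changes are needed.
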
 

\begin{proof} The $K$-span of the matrix coefficients of $F_{\mca{A}_K,L}$ is the image of the map 
\[L^\vee\otimes L\to\mca{A}_K;\ 
f\otimes y\mapsto[(f(y^{q^{{\rm deg} P}}) \mod P)_P],\] 
where $L^\vee={\rm Hom}_K(L,K)$. 
Since the trace form ${\rm Tr}_{L/K}:L\times L\to K; (x,y)\mapsto\sum_{\sigma\in \Gamma}\sigma(xy)$ 
is non-degenerate, we have an isomorphism $L\congto L^\vee$, and the image of the above map coincides with that of 
\[L\otimes L\congto L^\vee\otimes L\to \mca{A}_K;\ x\otimes y\mapsto [(\sum_{\sigma\in \Gamma}\sigma(xy^{q^{{\rm deg} P}}) \mod P)_P].\]
 
On the other hand, 
by \Cref{lem.Ginv-perm}, we have a surjective homomorphism ${\rm Tr}_{L/K}:L\otimes L\surj (L\otimes L)^\Gamma;$ $\sum_i x_i\otimes y_i\mapsto \sum_{\sigma \in \Gamma}\sigma(x_i)\otimes \sigma(y_i)$. 
By \Cref{lem.GinvAL}, Bourbaki's canonical isomorphism $\psi:L\otimes L\congto {\rm Map}(\Gamma,L)$; $\sum_i x_i\otimes y_i\mapsto (\tau\mapsto \sum_i x_i\tau(y_i))$ restricts to a bijection $\psi:(L\otimes L)^\Gamma\congto A(L)$. 
By \Cref{prop.Frobev.AK}, we have a well-defined map ${\rm ev}_L:A(L)\to \mca{A}_K;$ $g\mapsto [(g(\varphi_\mf{p})\mod \mf{p})_\mf{p}] = [(g(\varphi_P)\mod P)_P]$. 
By taking the composition 
\[L\otimes L\underset{{\rm Tr}_{L/K}}{\surj} (L\otimes L)^\Gamma \underset{\psi}{\congto} A(L) \underset{{\rm ev}_L}{\to} \mca{A}_K;\]
\begin{align*}
&x\otimes y\mapsto \sum_{\sigma\in \Gamma} \sigma(x)\otimes \sigma(y)
\mapsto (\tau\mapsto \sum_{\sigma} \sigma(x)\tau(\sigma(y))\\ 
&\mapsto [(\sum_{\sigma} \sigma(x)\varphi_P(\sigma(y)) \mod P)_P]
=[(\sum_{\sigma} \sigma(xy^{q^{{\rm deg} P}})) \mod P)_P],
\end{align*} 
we see that ${\rm Im}\,{\rm ev}_L$ coincides with the $K$-span above. 
\end{proof}

\begin{proof}[Proof of {\rm \Cref{thm.def.AK} (2) $\Leftrightarrow$ (3)}] 
Done by \Cref{lem.Kspan}. 
\end{proof} 

\section{Proof of \Cref{thm.subset.AK}} \label{sec.thm.subset.AK} 

\subsection{$\mca{P}^0_{\mca{A}_K}$ is a $K$-subalgebra of $\mca{A}_K$} 

Recall that $\alpha\in \mca{A}_K$ satisfies $\alpha\in \mca{P}^0_{\mca{A}_K}$ iff $\alpha\in {\rm Im}\, {\rm ev}_L$ for some finite Galois extension $L/K$. 

\begin{lem} \label{lem.LL'} 
For finite Galois extensions $L,L'/K$ with $L\subset L'$, 
the natural injective homomorphism $A(L)\inj A(L')$ is compatible with the Frobenius evaluation maps. 
\[\xymatrix{ A(L) \ar^{{\rm ev}_L}[r] \ar@{^(->}[d]& \mca{A}_L \ar@{^(->}[d]\\ 
A(L') \ar^{{\rm ev}_{L'}}[r]& \mca{A}_{L'} 
}
\]
\end{lem}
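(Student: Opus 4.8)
The plan is to make the horizontal map $A(L)\inj A(L')$ explicit, recall what the vertical inclusions are, and then reduce commutativity of the square to the standard functoriality of Frobenius elements in the tower $L'/L/K$. First I would let $\rho\colon {\rm Gal}(L'/K)\surj {\rm Gal}(L/K)$ be the restriction homomorphism, surjective with kernel ${\rm Gal}(L'/L)$, and to $g\in A(L)$ associate $g':=g\circ\rho\colon {\rm Gal}(L'/K)\to L\subset L'$. That $g'\in A(L')$ is immediate: for $\tilde\sigma,\tilde\tau\in{\rm Gal}(L'/K)$,
\[g'(\tilde\sigma\tilde\tau\tilde\sigma^{-1})=g\big(\rho(\tilde\sigma)\rho(\tilde\tau)\rho(\tilde\sigma)^{-1}\big)=\rho(\tilde\sigma)\big(g(\rho(\tilde\tau))\big)=\tilde\sigma\big(g'(\tilde\tau)\big),\]
the last step using that $g'(\tilde\tau)\in L$, on which $\tilde\sigma$ acts through $\rho(\tilde\sigma)=\tilde\sigma|_L$. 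Surjectivity of $\rho$ makes $g\mapsto g'$ injective, and under Bourbaki's isomorphism (\Cref{bou}, \Cref{lem.GinvAL}) one checks it corresponds to the inclusion $(L\otimes_K L)^{{\rm Gal}(L/K)}\inj (L'\otimes_K L')^{{\rm Gal}(L'/K)}$ induced by $L\inj L'$ — indeed $\psi(\sum_i x_i\otimes y_i)(\rho(\tilde\tau))=\sum_i x_i\tilde\tau(y_i)$ since $y_i\in L$ — so $g\mapsto g'$ is the natural injective homomorphism of the statement.

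Next, the vertical maps are the base-change inclusions $\mca{A}_L=L\otimes_K\mca{A}_K\inj L'\otimes_K\mca{A}_K=\mca{A}_{L'}$; concretely, for a prime $\mf{P}$ of $R_{L'}$ over $\mf{p}:=\mf{P}\cap R_L$ this is induced by $R_L/\mf{p}\inj R_{L'}/\mf{P}$. By \Cref{prop.Frobev.AK} both ${\rm ev}_L$ and ${\rm ev}_{L'}$ in fact land in $\mca{A}_K$, and the composite $\mca{A}_K\inj\mca{A}_L\inj\mca{A}_{L'}$ is the natural one; hence it suffices to prove ${\rm ev}_{L'}(g')={\rm ev}_L(g)$ as elements of $\mca{A}_K$.

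Finally, the heart of the matter is Frobenius functoriality. Fix a monic irreducible $P$ unramified in $L'/K$ (all but finitely many are), a prime $\mf{P}$ of $R_{L'}$ over $(P)$, and put $\mf{p}=\mf{P}\cap R_L$. Since $L/K$ is Galois, $\varphi_{\mf{P}}|_L\in{\rm Gal}(L/K)$ stabilizes $R_L$, and restricting $\varphi_{\mf{P}}(x)\equiv x^{\#R/(P)}\ ({\rm mod}\ \mf{P})$ to $x\in R_L$ and reducing modulo $\mf{P}\cap R_L=\mf{p}$ shows $\varphi_{\mf{P}}|_L$ satisfies the defining congruence of $\varphi_{\mf{p}}$; by uniqueness in the unramified case, $\rho(\varphi_{\mf{P}})=\varphi_{\mf{P}}|_L=\varphi_{\mf{p}}$. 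Hence $g'(\varphi_{\mf{P}})=g(\rho(\varphi_{\mf{P}}))=g(\varphi_{\mf{p}})$, and by \Cref{lem.FrobP} this element equals $a\bmod\mf{p}$ for some $a\in R$, so $g(\varphi_{\mf{p}})-a\in\mf{p}\subset\mf{P}$; therefore $g'(\varphi_{\mf{P}})\bmod\mf{P}$ is exactly the image of $a\bmod(P)$ under $R/(P)\inj R_{L'}/\mf{P}$, i.e.\ the $P$-component of ${\rm ev}_{L'}(g')$, viewed in $R/(P)$, coincides with that of ${\rm ev}_L(g)$. Throwing out in addition the finitely many $P$ dividing denominators of the entries of $g$, this yields the required identity in $\mca{A}_K$ and hence commutativity of the square.

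I do not expect a genuine obstacle: the only substantive ingredient is the classical functoriality $\rho(\varphi_{\mf{P}})=\varphi_{\mf{p}}$ of Frobenius up the tower, and everything else is the bookkeeping that the ``value of the Frobenius at $P$'' from \Cref{prop.Frobev.AK} is insensitive to whether it is computed in $R_L/\mf{p}$ or in $R_{L'}/\mf{P}$. The only mild care required is to keep the finitely many ramified primes and the primes dividing denominators out of the way, and these are harmless once we pass to $\mca{A}_K$.
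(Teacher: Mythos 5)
Your proposal is correct and follows essentially the same route as the paper: the paper's proof is exactly the observation that going right-then-down gives $(g(\varphi_{\mf p})\bmod \mf P)_{\mf P}$ while down-then-right gives $(g(\varphi_{\mf P}|_L)\bmod \mf P)_{\mf P}$, which agree by the Frobenius restriction identity $\varphi_{\mf P}|_L=\varphi_{\mf p}$ that you prove explicitly. Your extra verifications (that $g\circ\rho\in A(L')$, that this is the natural inclusion under $\psi$, and the bookkeeping with \Cref{lem.FrobP} and the finitely many bad primes) are details the paper leaves implicit, not a different argument.
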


\begin{proof} Let $\mf{p}$, $\mf{P}$ be unramified primes of $R_L$, $R_{L'}$ with $\mf{P}\mid \mf{p}$. Then by 
\begin{align*}
&A(L)\underset{{\rm ev}_L}{\to} \mca{A}_L \inj \mca{A}_{L'};\ g\mapsto (g(\varphi_\mf{p})\mod \mf{p})_{\mf{p}}
\mapsto (g(\varphi_\mf{p})\mod \mf{P})_{\mf{P}},\\ 
&A(L)\inj A(L')\overset{{\rm ev}_{L'}}{\to}  \mca{A}_{L'}; \ 
g\mapsto (g(\varphi_{\mf{P}}|_L) \mod \mf{P})_{\mf{P}}
=(g(\varphi_{\mf{p}})\mod \mf{P})_{\mf{P}},
\end{align*}
the diagram commutes. 
\end{proof}

\begin{proof}[Proof of {\rm \Cref{thm.subset.AK} (1)}] 
Let $\alpha,\beta\in \mca{P}_{\mca{A}_K}^0$. 
Then there are finite Galois extensions $L_1/K$, $L_2/K$ and elements $g\in A(L_1)$, $h\in A(L_2)$ satisfying ${\rm ev}_{L_1}(g)=\alpha$, ${\rm ev}_{L_2}(h)=\beta$. 
Take the composition field $L=L_1L_2$, which is again a finite Galois extension of $K$. 
By \Cref{lem.LL'}, we may regard $g,h\in A(L)$, so we obtain
$\alpha+\beta={\rm ev}_{L}(g+h)$, $\alpha\beta={\rm ev}_{L}(gh)\in \mca{P}_{\mca{A}_K}^0$.
In addition, let $k\in K$. Then by $kg\in A(L_1)$, we have $k\alpha={\rm ev}_{L_1}(kg)\in \mca{P}_{\mca{A}_K}^0$. 
Thus, $\mca{P}_{\mca{A}_K}^0$ is a $K$-subalgebra of $\mca{A}_K$. 
\end{proof}

\subsection{Proper inclusions $K\subsetneq \mca{P}^0_{\mca{A}_K}\subsetneq \mca{C}^{\rm sep}_{\mca{A}_K}\subsetneq \mca{C}^{\rm alg}_{\mca{A}_K}$}
 
\begin{lem} \label{lem.KP0} 
We have $K\subsetneq \mca{P}^0_{\mca{A}_K}$. 
\end{lem}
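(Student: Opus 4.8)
The plan is to prove the two containments separately: $K\subseteq\mca{P}^0_{\mca{A}_K}$, and then that it is strict. The first is immediate, since by \Cref{thm.subset.AK} (1) the set $\mca{P}^0_{\mca{A}_K}$ is a $K$-subalgebra of $\mca{A}_K$ (alternatively, for $c\in K$ the constant sequence $a_n=c$ has the separable eigen polynomial $x-1$, so condition (1) of \Cref{thm.def.AK} applies). For strictness I would use the element $\alpha$ of \Cref{eg.P0AK}. First I would confirm $\alpha\in\mca{P}^0_{\mca{A}_K}$: the sequence $(F_n)_n$ there satisfies a linear recurrence over $K$ with characteristic polynomial $x^{q+1}-1$ when $q$ is even (which has no multiple roots, since $p\nmid q+1$) and $x^2-\theta$ when $q$ is odd (which has no multiple roots, since $p\neq2$ and $\theta\neq0$). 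As a divisor of a separable polynomial is separable, the eigen polynomial of $(F_n)_n$ is separable, so \Cref{thm.def.AK} (1) gives $\alpha=[(F_{q^{\deg P}}\mod P)_P]\in\mca{P}^0_{\mca{A}_K}$. It then remains to show that $\alpha$ is not in the image of $K$ in $\mca{A}_K$.

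For this, the first key step is to record that the residues $F_{q^{\deg P}}$ take two distinct values in $\F_q$, each for infinitely many $P$. When $q$ is even, \Cref{eg.P0AK} gives $F_{q^{\deg P}}\equiv1+\deg P\pmod{2}$, and since there exist monic irreducibles of every positive degree in $R$, both parities of $\deg P$ occur infinitely often; hence the residue equals $1$ for infinitely many $P$ and $0$ for infinitely many $P$. When $q$ is odd, $F_{q^{\deg P}}\equiv\left(\frac{\theta}{P}\right)\pmod{P}$, which equals $1$ exactly when $P$ splits in the nontrivial separable quadratic extension $K(\sqrt{\theta})/K$; by the Chebotarev density theorem (\Cref{lem.Cheb}) this set and its complement both have density $\frac{1}{2}$, so the residue equals $+1$ for infinitely many $P$ and $-1$ for infinitely many $P$.

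The second key step is a clearing-of-denominators argument. Suppose, for contradiction, that $\alpha$ is the image of $c=a/b\in K$ with $a,b\in R$ and $b\neq0$. Then $F_{q^{\deg P}}\equiv ab^{-1}\pmod{P}$, equivalently $a\equiv b\cdot F_{q^{\deg P}}\pmod{P}$, for all but finitely many $P$. If $F_{q^{\deg P}}=v$ for a fixed $v\in\F_q$ and infinitely many $P$, then $P\mid a-vb$ for infinitely many monic irreducibles $P$; since a nonzero element of $R=\F_q[\theta]$ has only finitely many monic irreducible divisors, this forces $a-vb=0$. Applying this to the two distinct values $v_1\neq v_2$ from the first step yields $v_1b=a=v_2b$, so $(v_1-v_2)b=0$; as $v_1-v_2$ is a nonzero element of $\F_q\subseteq K$, we obtain $b=0$, a contradiction. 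Therefore $\alpha\notin K$, and $K\subsetneq\mca{P}^0_{\mca{A}_K}$.

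I do not anticipate a genuine obstacle; once the two-values fact is established, the rest is bookkeeping. The points that warrant a little care are the separability of the eigen polynomials $x^{q+1}-1$ and $x^2-\theta$, and, in odd characteristic, the appeal to Chebotarev (or an elementary Euclid-type argument) to guarantee infinitely many $P$ with $\theta$ a square and infinitely many with $\theta$ a nonsquare modulo $P$; in even characteristic the analogous input is merely the existence of irreducibles of every degree over $\F_q$.
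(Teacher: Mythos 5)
Your proposal is correct and follows essentially the same route as the paper: it uses the sequences of \Cref{eg.P0AK} together with the infinitude (via Chebotarev/density in the odd-$q$ case, and irreducibles of every degree in the even-$q$ case) of primes realizing two distinct constant residues, which a single element of $K$ cannot match modulo infinitely many $P$. Your write-up is somewhat more detailed than the paper's, notably in verifying separability of the eigen polynomials and in the explicit clearing-of-denominators step, which the paper leaves implicit.
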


\begin{proof} Let $(a_P)_P\in \prod_P R/(P)$. 
If $\alpha=[(a_P)_P]$ is in the image of $K$, then 
there is a unique $a\in R$ such that,  
$a\mod P=a_P$ for every $P$ with ${\rm deg}P\gg 0$. 
Note that the Dirichlet--Kornblum--Landau theorem on arithmetic progressions and the quadratic reciprocity law (or the Chebotarev density theorem, see \Cref{lem.Cheb})   
yield that the Dirichlet densities of $P$'s with $(\frac{\theta}{P})=1$ and $-1$ are both $1/2$. 
Hence, the linear recurrent sequences $(F_n)_n$ in \Cref{eg.P0AK} gives nontrivial elements of $\mca{P}^0_{\mca{A}_K}$. 
\end{proof}

\begin{rem} The construction of \Cref{eg.P0AK} (2) was initially inspired from our another work \cite[Section 5]{SakamotoTangeUeki2026CMB} on knots and primes. 
We wonder if we can give a similar construction for $2\mid q$ cases by using the quadratic symbol in characteristic 2 \cite{ConradK2010QRchar2}. 
\end{rem}

\begin{lem} \label{lem.P0Csep} 
We have 

{\rm (1)} $\mca{P}^0_{\mca{A}_K}\subset \mca{C}^{\rm sep}_{\mca{A}_K}$, 

{\rm (2) (i)} $\mca{P}^0_{\mca{A}_K}$ is countable, 
{\rm (ii)} $\mca{C}^{\rm sep}_{\mca{A}_K}$ is uncountable.  

So, $\mca{P}^0_{\mca{A}_K}\subsetneq \mca{C}^{\rm sep}_{\mca{A}_K}$. 
\end{lem}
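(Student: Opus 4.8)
The plan is to establish the three assertions separately; (1) together with the countability/uncountability dichotomy of (2) yields the properness of the inclusion.

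For \textbf{(1)} I would use the characterization \Cref{thm.def.AK}~(2): write $\alpha=[(g(\varphi_P)\bmod P)_P]$ for a finite Galois extension $L/K$ with $\Gamma={\rm Gal}(L/K)$ and some $g\in A(L)$. The defining relation $g(\sigma\tau\sigma^{-1})=\sigma g(\tau)$ shows that the finite image $g(\Gamma)\subset L$ is stable under the $\Gamma$-action on $L$; hence $f(x):=\prod_{\gamma\in g(\Gamma)}(x-\gamma)$ has $\Gamma$-invariant coefficients, so $f(x)\in K[x]$, and $f$ is separable since its roots are distinct by construction (and $f\neq 0$). For every monic irreducible $P$ of sufficiently large degree — chosen coprime to the denominators of the values of $g$, exactly as in the proof of \Cref{thm.def.AK}~(1)$\Leftrightarrow$(2) — the element $g(\varphi_P)\in g(\Gamma)$ is a root of $f$, so $f(g(\varphi_P))\equiv 0\bmod P$; therefore $f(\alpha)=0$ in $\mca{A}_K$ and $\alpha\in\mca{C}^{\rm sep}_{\mca{A}_K}$.

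For \textbf{(2)(i)} I would invoke \Cref{thm.def.AK}~(1): every element of $\mca{P}^0_{\mca{A}_K}$ arises from a $K$-linear recurrent sequence, which is determined by a finite tuple of elements of $K=\F_q(\theta)$ (the recurrence coefficients together with finitely many initial terms). Since $K$ is countable, there are only countably many such sequences, so $\mca{P}^0_{\mca{A}_K}$ is countable (and it is infinite, e.g.\ by \Cref{lem.KP0}). For \textbf{(2)(ii)} I would exhibit an uncountable family inside $\mca{C}^{\rm sep}_{\mca{A}_K}$ using the single separable polynomial $f(x)=x^q-x\in K[x]$: for every monic irreducible $P$ the roots of $f$ in $R/(P)=\F_{q^{\deg P}}$ are precisely the elements of $\F_q$, so every element of the subring $\bigl(\prod_P\F_q\bigr)/\bigl(\bigoplus_P\F_q\bigr)\subset\mca{A}_K$ (embedded via $\F_q\subset R/(P)$) is killed by $f$ and thus lies in $\mca{C}^{\rm sep}_{\mca{A}_K}$. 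This subring is uncountable, since $\prod_P\F_q$ has cardinality $2^{\aleph_0}$ while $\bigoplus_P\F_q$ is countable, forcing the quotient to have cardinality $2^{\aleph_0}$. Combining the three parts gives $\mca{P}^0_{\mca{A}_K}\subsetneq\mca{C}^{\rm sep}_{\mca{A}_K}$.

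None of these steps is a genuine obstacle. The only point needing a little care is the verification in (1) that $\prod_{\gamma\in g(\Gamma)}(x-\gamma)$ descends to $K[x]$ and that the congruence $f(g(\varphi_P))\equiv 0$ holds for all but finitely many $P$; both are handled by the $\Gamma$-equivariance of $g$ and the denominator bookkeeping already performed in \Cref{sec.proof.(1)iff(2)}.
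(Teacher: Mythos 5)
Your proposal is correct and follows essentially the same route as the paper: part (1) via \Cref{thm.def.AK}~(2) by annihilating all possible values of $g$ with a polynomial over $K$, part (2)(i) by counting linear recurrences over a countable field, and part (2)(ii) by exhibiting an uncountable family of roots of one fixed separable polynomial modulo the countable direct sum. The only (cosmetic) differences are that the paper takes $f=\prod_{\sigma\in\Gamma}f_\sigma$ with $f_\sigma$ the minimal polynomial of $g(\sigma)$ (a product of separable polynomials) where you take the genuinely separable $\prod_{\gamma\in g(\Gamma)}(x-\gamma)$, and that the paper uses $x^2-x$ with $\{0,1\}$-valued sequences indexed by $\deg P$ where you use $x^q-x$ with $\F_q$-valued tuples; both variants are equally valid.
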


\begin{proof}
(1) 
Let $\alpha\in \mca{P}^0_{\mca{A}_K}$. By \Cref{thm.def.AK} (2), 
there is a finite Galois extension $L/K$ and a map $g\in A(L)$ with ${\rm ev}_L(g)=[(g(\varphi_P)\mod P)_P]=\alpha$. 
For each $\sigma\in {\rm Gal}(L/K)$, let $f_\sigma(x)\in K[x]$ denote the minimal polynomial of $g(\sigma)\in L$ over $K$. Put $f(x)=\prod_\sigma f_\sigma(x)$. 
Then $f(x)$ is a product of separable polynomials with $f(g)=0$, so $f(\alpha)=0$, 
and hence $\alpha\in \mca{C}^{\rm sep}_{\mca{A}_K}$.

(2) 
Note that $R$ is a countable set. 

(i) Since a linear recurrence formula over $K$ and the initial condition to define a linear recurrent sequence correspond to elements of $R^{\oplus \Z}$, 
linear recurrent sequences correspond to elements of $R^{\oplus \Z}\otimes R^{\oplus \Z}$. 
By \Cref{thm.def.AK} (1), there is a surjection $R^{\oplus \Z}\otimes R^{\oplus \Z} \surj P^0_{\mca{A}_K}$, hence $P^0_{\mca{A}_K}$ is countable. 

(ii) For each $S\subset \Z_{\geq 0}$, 
define 
$a_{q^{{\rm deg}P}}=\begin{cases} 0\ \text{if} \ {\rm deg}P\in S\\ 1\ \text{if} \ {\rm deg} P\not\in S\end{cases}\!\!\!\!\!\!$ 
and put $\alpha_S=[(a_{q^{{\rm deg} P}} \mod P)_P]\in \mca{A}_K$. 
Then $\alpha_S$ is a root of $x^2-x$, which is separable in any characteristic, so $\alpha_S\in \mca{C}^{\rm sep}_{\mca{A}_K}$. 
Since $2^{\Z_{\geq 0}}$ is uncountable and $\bigoplus_P R/(P)$ is countable, 
the set of $\alpha_S$'s is uncountable, and so is $\mca{C}^{\rm sep}_{\mca{A}_K}$. 
\end{proof}

\begin{lem} \label{lem.CsepCalg}
We have $\mca{C}^{\rm sep}_{\mca{A}_K}\subsetneq \mca{C}^{\rm alg}_{\mca{A}_K}$. 
\end{lem}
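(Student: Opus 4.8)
The plan is to produce an explicit element $\alpha\in\mca{A}_K$ that is algebraic over $K$ yet satisfies no separable polynomial over $K$; since every separable polynomial is in particular a polynomial, this at once gives the proper inclusion $\mca{C}^{\rm sep}_{\mca{A}_K}\subsetneq\mca{C}^{\rm alg}_{\mca{A}_K}$. The candidate is a ``$p$-th root of $\theta$''. For each monic irreducible $P$, the residue field $R/(P)\cong\F_{q^{\deg P}}$ is perfect, so the $p$-power map on it is bijective and $\theta\bmod P$ has a unique $p$-th root $a_P\in R/(P)$; set $\alpha=[(a_P)_P]\in\mca{A}_K$. By construction $\alpha^p=\theta$ in $\mca{A}_K$, so $\alpha$ is a root of the nonzero polynomial $x^p-\theta\in K[x]$ and $\alpha\in\mca{C}^{\rm alg}_{\mca{A}_K}$.

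To see $\alpha\notin\mca{C}^{\rm sep}_{\mca{A}_K}$, I would first note that $x^p-\theta$ is irreducible over $K=\F_q(\theta)$: if $\theta$ were a $p$-th power $f^p$ in $K$, clearing denominators and comparing degrees in $\F_q[\theta]$ gives a contradiction, so $\theta$ is not a $p$-th power in $K$ and $x^p-\theta$ is irreducible (and has the $p$-fold root $\theta^{1/p}$ in $\ol{K}$). Now suppose $g(x)\in K[x]$ has no multiple roots and $g(\alpha)=0$; clearing denominators we may take $g\in R[x]$, and then $g(a_P)\equiv0\pmod P$ for all $P$ with $\deg P\gg0$. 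Since $g$ is squarefree it is not divisible by $x^p-\theta$, so by irreducibility $\gcd\bigl(g(x),\,x^p-\theta\bigr)=1$ in $K[x]$; clearing denominators in a B\'ezout identity yields $u(x)g(x)+v(x)(x^p-\theta)=c$ with $u,v\in R[x]$ and $c\in R\setminus\{0\}$. Reducing this identity modulo $P$ and evaluating at $x=a_P$, where $a_P^p-\theta\equiv0$, gives $c\equiv0\pmod P$ for all $P$ with $\deg P\gg0$, which is absurd because only the finitely many $P$ dividing $c$ can do this. Hence no squarefree polynomial over $K$ annihilates $\alpha$.

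Finally I would reconcile this with the paper's convention, under which (as in the proof of \Cref{lem.P0Csep}) ``$\alpha$ separable over $K$'' is witnessed by a polynomial that is a product of separable polynomials rather than a single squarefree one; this is harmless. If $g=\prod_i g_i$ with each $g_i$ squarefree, then no $g_i$ is divisible by $x^p-\theta$, so $\gcd(x^p-\theta,g_i)=1$ for every $i$, hence $\gcd(x^p-\theta,g)=1$ and the B\'ezout argument applies verbatim. (Equivalently, $\mca{A}_K$ is reduced, being the quotient of the reduced ring $\prod_P R/(P)$ by the radical ideal $\bigoplus_P R/(P)$, so $\alpha$ satisfies a product of separable polynomials exactly when it satisfies their common squarefree radical, which is itself separable.) The whole argument is routine; the only genuinely substantive point is the observation that $\theta^{1/p}$ makes sense coordinate-by-coordinate in $\mca{A}_K$ precisely because finite fields are perfect — there is no real obstacle beyond that.
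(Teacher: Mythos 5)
Your witness element is the same as the paper's: the paper's proof also takes the root of $x^p-\theta$ (introduced there as $f(x)=x^q-\theta$) in each residue field $R/(P)$, obtained from the bijectivity of the $p$-power map on a finite field, to produce $\alpha\in\mca{C}^{\rm alg}_{\mca{A}_K}$ with $\alpha^p=\theta$. Where you genuinely add something is the second half. The paper's proof stops after exhibiting this root and leaves the claim $\alpha\notin\mca{C}^{\rm sep}_{\mca{A}_K}$ implicit, presumably because $x^p-\theta$ is irreducible and purely inseparable over $K$; but since $\mca{A}_K$ has zero divisors one cannot simply invoke minimal polynomials, and your B\'ezout argument --- if $g\in K[x]$ is a product of separable polynomials with $g(\alpha)=0$, then each separable factor is coprime to the irreducible inseparable $x^p-\theta$, so $\gcd(g,x^p-\theta)=1$; clearing denominators in $u g+v(x^p-\theta)=c$ with $c\in R\setminus\{0\}$ and evaluating mod $P$ at $x=a_P$ forces $P\mid c$ for almost all $P$, a contradiction --- is exactly what is needed to make that step rigorous. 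Your verification that $x^p-\theta$ is irreducible (degree count showing $\theta\notin K^p$) and your handling of the paper's ``product of separable polynomials'' convention, factor by factor, are both correct. So the proposal is correct, uses the same construction as the paper, and in fact supplies a complete proof of the non-separability claim that the paper's own proof only asserts.
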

 
\begin{proof} Let $f(x)=x^q-\theta$. 
If $P\neq \theta$, then $0\neq \theta \mod (P)$ in $R/(P)=\F_{q^d}$, and $e={\rm ord}\,\theta$ in $\F_{q^d}^\times$ divides $q^d-1$.
Since the set $\mu_e$ of primitive $e$-th roots of unity in $\F_{q^d}$ admits an automorphism $x\mapsto x^q$, $f(x)=x^q-\theta \mod P$ always has a root in $\mca{A}_K$. 

Let $\alpha$ be a root of $f(x)=x^q-\theta$ in $\mca{A}_K$. 
Suppose that there exists a separable polynomial $g(x)\in K[x]$ with $g(\alpha)=0$. 
Since $f(x)$ is irreducible and non-separable while $g(x)$ is separable, 
there exist $u,v\in K[x]$ with $uf+vg=1$. 
This contradicts $f(\alpha)=g(\alpha)=0$. Hence $\alpha$ is non-separable and $\alpha\in \mca{C}^{\rm alg}_{\mca{A}_K}\setminus \mca{C}^{\rm sep}_{\mca{A}_K}$. 
\end{proof} 

\begin{rem} The proper inclusion $\Q\subsetneq \mca{P}^0_{\mca{A}}\subsetneq \mca{C}_{\mca{A}}$ for Rosen's ring may be proved similarly. 
\end{rem}

\begin{rem} 
Consider the map $\Delta: R^{\Z_{\geq 0}}\to \mca{A}_K;$ $(a_{q^d})_d\mapsto [(a_{q^{{\rm deg} P}})_P]$. 
For each subset $\mca{S}\subset \mca{A}_K$, 
let $\mca{S}^\Delta=\mca{S}\cap {\rm Im}\,\Delta$. 

By \Cref{thm.def.AK} (1), we have $\mca{P}^{0\Delta}_{\mca{A}_K}=\mca{P}^{0}_{\mca{A}_K}$. 

By the proof of \Cref{lem.P0Csep} (2) (ii), $\mca{C}^{\rm sep \Delta}_{\mca{A}_K}$ is also uncountable, and hence $\mca{P}^0_{\mca{A}_K}\subsetneq \mca{C}^{\rm sep \Delta}_{\mca{A}_K}$. 
If we replace $S\subset \Z_{\geq 0}$ by a set $S$ of monic irreducible elements, then we obtain more elements, so we have $\mca{C}^{\rm sep \Delta}_{\mca{A}_K} \subsetneq \mca{C}^{\rm sep }_{\mca{A}_K}$. 
Explicit constructions of $\alpha \in \mca{C}^{\rm sep \Delta}_{\mca{A}_K}\setminus \mca{P}^0_{\mca{A}_K}$
would be of further interest. 

The proof of \Cref{lem.CsepCalg} further yields an element that assures $\mca{C}^{\rm sep \Delta}_{\mca{A}_K} \subsetneq \mca{C}^{\rm alg \Delta}_{\mca{A}_K}$ by choosing a common element of $R$ for each degree. 
\end{rem}

\subsection{``Transcendental'' elements} 
\subsubsection{In $\mca{A}$} \label{ss.transA} 
We first recall the cases over $\Q$. 
According to Anzawa--Funakura \cite{AnzawaFunakura2024},
an \emph{$\mca{A}$-transcendental number} stands for an element of the complement $\mca{A}\setminus \mca{C}_\mca{A}$. 
Their criterion may be rephrased as 
\begin{prop}[(cf.~{\cite[Proposition 3.7]{AnzawaFunakura2024})}] \label{prop.trans.A} 
Let $(a_p)_p\in \prod_p \Z/p\Z$. 
If $\alpha=[(a_p)_p]\in \mca{C}_\mca{A}$, then there are only finitely many $a\in \Z$ such that ``$a_p=a\mod p$ for infinitely many $p$'s''. 
\end{prop}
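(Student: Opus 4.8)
The plan is to deduce everything from a single witnessing polynomial. Since $\alpha\in\mca{C}_\mca{A}$, there is a nonzero $f(x)\in\Q[x]$ with $f(\alpha)=0$ in $\mca{A}$; clearing denominators we may take $f(x)=\sum_i c_i x^i\in\Z[x]$, still nonzero. As the $c_i$ are integers, evaluation commutes with reduction modulo each $p$, so $f(\alpha)=[(f(a_p))_p]$, where $f(a_p)$ is computed in $\Z/p\Z$. Hence $f(\alpha)=0$ says exactly that there is a bound $p_0$ with $f(a_p)=0$ in $\Z/p\Z$ for all $p\ge p_0$.

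Next, suppose $a\in\Z$ satisfies $a_p=a\bmod p$ for infinitely many primes $p$. Removing the finitely many $p<p_0$, infinitely many primes $p$ remain for which $a_p=a\bmod p$ and $f(a_p)=0$ in $\Z/p\Z$; for each of them $p\mid f(a)$. Since $f(a)$ is a fixed integer divisible by infinitely many primes, $f(a)=0$. Thus every integer $a$ with the stated property is a root of $f$.

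Finally, a nonzero polynomial over the field $\Q$ has at most $\deg f$ roots, so there are at most $\deg f$ such integers $a$; in particular, finitely many. I do not expect a genuine obstacle here. The only points that merit a word of care are that the witnessing polynomial may be chosen with integer coefficients, so that $f(\alpha)=[(f(a_p))_p]$ holds with no denominators to worry about, and that discarding the finitely many primes below $p_0$ is harmless because the hypothesis on $a$ concerns \emph{infinitely many} $p$.
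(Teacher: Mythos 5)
Your proof is correct and follows essentially the same route the paper takes (for the positive-characteristic analogue, \Cref{prop.trans}): a witnessing polynomial $f$ with $f(a_p)=0$ for all large $p$, so any $a$ hit infinitely often gives $f(a)$ divisible by infinitely many primes, forcing $f(a)=0$ and bounding the number of such $a$ by $\deg f$. Your extra care in clearing denominators to get $f\in\Z[x]$ is a harmless refinement, not a departure.
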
 

\begin{eg} 
[(``transcendental'' elements of $\mca{A}$, in several senses)] \label{eg.trans} \ 

\renewcommand{\labelitemi}{$\bullet$}
\begin{itemize} 
\item Anzawa--Funakura \cite[Example 3.8]{AnzawaFunakura2024}. 
If $(a_p)_p=(1,1,2,1,2,3,1,2,3,4,\ldots)$, then \Cref{prop.trans.A} assures that 
$\alpha=[(\alpha_p)_p]\in \mca{A}\setminus \mca{C}_\mca{A}$ . 

\item 
Anzawa--Funakura \cite[Theorem 1.2]{AnzawaFunakura2024}.  
Under the generalized Riemann hypothesis (GRH), $q$-Fibonacci sequences define elements in $\mca{A}\setminus \mca{C}_\mca{A}$. 

\item Luca--Zudilin \cite{LucaZudilin2025Ramanujan}.  
(Without assuming GRH,) 
$q$-Fibonacci sequences define elements in $\mca{A}\setminus \mca{P}^0_\mca{A}$.

\item Luca--Zudilin \cite{LucaZudilin2025-arXiv} The Frobenius traces of an elliptic curve define an element in $\mca{A}\setminus \mca{P}^0_\mca{A}$. 
  
\item[$\circ$] 
Non-zero finite MZVs are generically expected to lie in $\mca{A}\setminus \mca{C}_\mca{A}$ and $\mca{A}\setminus \mca{P}_\mca{A}$, as Kaneko--Zagier's conjecture suggests, yet no example has been shown to be even non-zero, which remains a notoriously difficult open problem (cf.\, Kaneko \cite{Kaneko2019PMB}, Seki \cite{Seki2024Integers}). 
  
\item[$\circ$] 
Kaneko--Matsusaka--Seki 
\cite{KanekoMatsusakaSeki2025IMRN} introduces and compares several finite analogues of Euler's constant 
$\ds \gamma=\lim_{n\to \infty}(1+\frac{\,1\,}{2}+\frac{\,1\,}{3}+\cdots+\frac{\,1\,}{n}-\log n)$ 
from the perspectives of a ``regularized value of $\zeta(1)$'', 
that are expected to be in $\mca{A}\setminus \mca{P}_\mca{A}$ as suggested by Kontsevich--Zagier's work \cite{KontsevichZagier2001} (see \cite[Remark 4.2]{KanekoMatsusakaSeki2025IMRN}). 
The relationship between $\mca{C}_\mca{A}$ and $\mca{P}_\mca{A}$ is likely still unknown today. 

\item[$\circ$] Matsusaka--Miyazaki--Yara \cite{MatsusakaMiyazakiYara2026IJNT} 
study a finite analogue of Dobi\'nski's formula related to the Napier constant $e$, 
as well as partially extends the previous work on Euler's constant. 

\item[$\bullet$] Matsusaka--Seki \cite{MatsusakaSeki2026-arXiv-NaiveTrans} 
thoroughly updates the works of Anzawa--Funakura and Luca--Zudilin,
as well as points out and suggests further examples of elements in $\mca{A}\setminus \mca{C}_\mca{A}$. Among other things, they removed GRH in \cite[Theorem 1.2]{AnzawaFunakura2024}. 

\item[$\bullet$] Mihara \cite{Mihara2026-arXiv-APNSA} 
extends Matsusaka--Seki's transcendence criteria by applying non-standard analysis and model theory. 
\end{itemize}
\end{eg}

\subsubsection{In $\mca{A}_K$} 
\begin{prop} \label{prop.trans} 
Let $(a_P)_P\in \prod_P R/(P)$. 
If $\alpha=[(a_P)_P]\in \mca{C}^{\rm alg}_{\mca{A}_K}$, 
then there are only finitely many $x\in R$ such that 
``$a_P=x\mod P$ for infinitely many $P$'s''. 
\end{prop}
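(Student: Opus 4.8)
\emph{Proof plan.} The strategy is to run the positive-characteristic analogue of the argument behind \Cref{prop.trans.A}. Since $\alpha=[(a_P)_P]\in\mca{C}^{\rm alg}_{\mca{A}_K}$, I would first fix a nonzero polynomial $f(x)=\sum_i c_i x^i\in K[x]$ with $f(\alpha)=0$ in $\mca{A}_K$ (no minimality is needed, which is convenient since $\mca{A}_K$ is not a domain). Choosing a common denominator $D\in R\setminus\{0\}$ so that $c_iD\in R$ for all $i$, one gets for every monic irreducible $P\nmid D$ a well-defined reduction $\ol f_P\in (R/(P))[x]$, and the hypothesis $f(\alpha)=0$ unwinds precisely to: $\ol f_P(a_P)=0$ in $R/(P)$ for all but finitely many $P$.

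Next, suppose $x\in R$ satisfies ``$a_P=x\mod P$ for infinitely many $P$''. For all but finitely many of those $P$ we may also assume $P\nmid D$ and $\ol f_P(a_P)=0$. Since reduction modulo $P$ is a ring homomorphism $R[1/D]\to R/(P)$ sending $x$ to $x\mod P=a_P$, we obtain $f(x)\equiv 0 \mod P$ in $R/(P)$ for infinitely many $P$. I would then show this forces $f(x)=0$: writing $f(x)=u/v$ with $u,v\in R$ coprime, the congruence $f(x)\equiv 0\mod P$ for $P\nmid Dv$ says exactly that $P\mid u$; if $f(x)\neq 0$ then $u\neq 0$ has only finitely many monic irreducible divisors by unique factorization in $R=\F_q[\theta]$, contradicting infinitude. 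Hence every such $x$ is a root of $f$ in $R\subset K$, and since $f\in K[x]$ is nonzero it has at most ${\rm deg}\, f$ roots; therefore there are only finitely many such $x$.

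I do not expect a genuine obstacle here: the proof is essentially formal, and the only care required is the bookkeeping of the finitely many ``bad'' $P$ (the divisors of $D$, the divisors of $v$, and the finite exceptional set coming from $f(\alpha)=0$), together with the elementary fact that a nonzero element of $R$ has only finitely many prime divisors. If desired, I would close with a remark that, exactly as over $\Q$ (cf.~\Cref{eg.trans}), this criterion detects membership in $\mca{C}^{\rm alg}_{\mca{A}_K}$ only and says nothing about membership in $\mca{P}^0_{\mca{A}_K}$.
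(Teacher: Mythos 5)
Your proposal is correct and follows essentially the same route as the paper: take a nonzero annihilating polynomial, note that $f(\alpha)=0$ forces $f(a_P)=0$ in $R/(P)$ for almost all $P$, deduce that any $x\in R$ hitting $a_P$ infinitely often has $f(x)$ divisible by infinitely many irreducibles and hence $f(x)=0$, and conclude via the bound $\deg f$ on the number of roots. The only cosmetic difference is that the paper clears denominators once to take $f\in R[x]$, whereas you keep $f\in K[x]$ and track the denominators $D$ and $v$ explicitly.
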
 

\begin{proof} Let $f(x)\in R[x]$ with $f(\alpha)=0$. 
Then $f(a_P) = 0$ in $R/(P)$ for almost all $P$'s. 
Suppose that $a\in R$ satisfies $a_P=a\mod P$ for infinitely many $P$'s. 
Then $f(a)\equiv 0\mod P$ for infinitely many $P$'s. 
Since $f(a)\in R$ has only finitely many divisors, we must have $f(a)=0$ in $R$, 
and there are only finitely many such $a$'s. 
\end{proof} 

\begin{eg} \label{eg.trans.AK} 
Let $(a_{q^n})_n=(1,1,\theta,1,\theta,\theta^2,1,\theta,\theta^2,\theta^3,\ldots)$. 
Then, by \Cref{prop.trans}, $\alpha=[(a_{q^{{\rm deg}P}})_P]\in \mca{A}_K\setminus \mca{C}^{\rm alg}_{\mca{A}_K}$. 
\end{eg} 

\begin{rem} Exploring ``transcendental'' elements in $\mca{A}_K$ of venerable backgrounds, such as those in \Cref{eg.trans}, would be of great further interest.  
\end{rem} 

\begin{proof}[Proof of {\rm \Cref{thm.subset.AK} (2)}] 
Lemmas \ref{lem.KP0}, \ref{lem.P0Csep}, \ref{lem.CsepCalg}, and \Cref{eg.trans.AK} complete the proof.  
\end{proof} 

\section{Proofs of \Cref{cor.Frob.AK} and \Cref{thm.root.AK}} \label{sec.proofs.Frob/root.AK} 
Let us derive two consequences of \Cref{thm.def.AK} (1) $\Leftrightarrow$ (2). 

\begin{proof}[Proof of {\rm \Cref{cor.Frob.AK}}] 
Let $S$ be a set of monic irreducible elements of $R$. 

Suppose that there is a linear recurrent sequence $(a_n)_n$ over $K$ with separable eigen polynomial such that 
$S$ is a cofinite subset of $\{P\mid a_{q^{{\rm deg} P}}\equiv 0\mod P\}$.
By \Cref{thm.def.AK} (1) $\Leftrightarrow$ (2), there is a finite Galois extension $L/K$ and $g\in A(L)$ such that $[(g(\varphi_P)\mod P)_P]= [(a_{q^{{\rm deg}P}}\mod P)_P]$, so we have $g(\varphi_P)\equiv 0 \mod P$ iff $a_{q^{{\rm deg}P}}\equiv0 \mod P$ for almost all $P$'s. 
Let $C_g=\{\sigma\in {\rm Gal}(L/K)\mid g(\sigma)=0\}$. 
Since $g(\sigma\tau\sigma^{-1})=\sigma g(\tau)$ for every $\sigma,\tau\in \Gamma$, the set $C_g$ is a union of conjugacy classes of ${\rm Gal}(L/K)$.
Since $S_{L,C_g}$ and $S$ almost coincide, $S$ is Frobenian. 

Conversely, suppose that $S$ is Frobenian, so there are $L/K$ and $C$ such that $S_{L,C}$ and $S$ almost coincide. 
Let $g:{\rm Gal}(L/K)\to L$ denote the characteristic function of $C$. Then $g\in A(L)$. 
Again, by \Cref{thm.def.AK} (1) $\Leftrightarrow$ (2), there is a linear recurrent sequence $(a_n)_n$ over $K$ with separable eigen polynomial 
such that $a_{q^{{\rm deg}P}} \equiv g(\varphi_P)\mod P$ for almost all $P$'s.
Hence, $\{P\mid a_{q^{{\rm deg}P}} \equiv 0 \mod P\}$ and $\{P\mid \varphi_P\subset C\}$ almost coincide. 
By multiplying a constant to $(a_n)_n$, we can modify the zero set up to any finite set, 
except for the set $\{P\mid a_P=0 \text{\ in\ }K\}$, 
to have that $S$ is a cofinite subset of $\{P\mid a_{q^{{\rm deg} P}}\equiv 0\mod P\}$. 
\end{proof}

\begin{rem} \label{rem.S={}}
Both in Corollaries \ref{cor.Rosen.1.3} and \ref{cor.Frob.AK}, it remains an interesting question to ask 
whether we always have a sequence $(a_n)_n$ with the equality 
$S=\{p\mid a_p\equiv 0\mod p\}$ or $S=\{P\mid a_P\equiv 0\mod P\}$ 
for a given Frobenian set $S$. 
A construction of $(a_n)_n$ with a detection of exceptional primes in \cite[Theorems 1.3, 1.6]{RosenTakeyamaTasakaYamamoto2024JNT} under a simple setting would give a clue. 
\end{rem}

\begin{proof}[Proof of {\rm \Cref{thm.root.AK}}] 
Let $\alpha\in \mca{P}^0_{\mca{A}_K}$ and $f(x)\in K[x]$ with $f(\alpha)=0$. 
Take $(a_P)_P\in \prod_P R/(P)$ with $\alpha=[(a_P)_P]$. 
By definition (\Cref{thm.def.AK} (2)), 
we have a finite Galois extension $L/K$ with $\Gamma={\rm Gal}(L/K)$ and $g\in A(L)$ such that 
$a_P=g(\varphi_P)\mod P$ for almost all $P$'s, so 
$f(a_P)=f(g(\varphi_P))\mod P=0$ for almost all $P$'s.  
Let $e\in \Gamma$ denote the identity element. 
By the Chebotarev density theorem (\Cref{lem.Cheb}), the Dirichlet density of unramified primes $\mf{p}$ of $L$ 
with $\varphi_\mf{p}\in \{e\}$  is $1/\#\Gamma$, 
so we have $f(g(e))\mod P=0$ for infinitely many $P$'s, and hence $f(g(e))=0$. 
By $g\in A(L)$, we have $g(e)=\sigma(g(e))$ for every $\sigma\in \Gamma$, 
so $g(e)\in K$. 
This completes the proof. 
\end{proof} 

\section{Proofs of \Cref{thm.density.AK}} \label{sec.proof.density.AK} 

\subsection{Preliminaries} 

\subsubsection{Classics}
We invoke the following general results. 

\begin{lem}[(Chebotarev's density theorem,  cf.\,{\cite[Theorem 9.13 A, B]{Rosen2002GTM}})]
\label{che} \label{lem.Cheb} 
Let $K$ be a function field with the constant field $\F_q$ (e.g., $K=\F_q(\theta)$ as before)  and 
let $\mca{S}_K$ denote the set of all primes (non-zero prime ideals) $P$ of $K$.  
Define the Dirichlet density of each $\mca{M}\subset \mca{S}_K$ by 
\[\delta(\mca{M})=\lim_{s\to 1+0} \frac{\sum_{P\in \mca{M}}(q^{{\rm deg}P})^{-s}}{\sum_{P\in \mca{S}_K}(q^{{\rm deg}P})^{-s}}.\]
Let $L/K$ be a finite Galois extension with $\Gamma={\rm Gal}(L/K)$ and let $C\subset \Gamma$ be a conjugacy class.
Put $\mca{S}'_K=\{P\in \mca{S}_K\mid \text{unramified in\ } L/K\}$. 
Let $\varphi_P$ denote the Frobenius conjugacy class of $P$, that is, the set of all $\varphi_\mf{p}$'s with $\mf{p}\mid P$.  
Let $S'_{L,C}=\{P\in \mca{S}'_K\mid \varphi_P=C\}$. 
Then 
\[\delta(S'_{L,C})=\frac{\#C}{\#\Gamma}.\]
If $L/K$ is a geometric extension (i.e., $K$ and $L$ have the same constant field), then the natural density 
\[\lim_{x\to \infty} \frac{\#\{P\in S'_{L,C}\mid {\rm deg}P\leq x\}}{\#\{P\in \mca{S}'_K\mid {\rm deg}P\leq x\}}\]
exists and coincides with the Dirichlet density. 
\end{lem}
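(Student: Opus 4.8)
The plan is to deduce this from the analytic theory of Artin L-functions over a global function field, following the classical template; in the paper we simply take \cite[Theorem 9.13 A, B]{Rosen2002GTM} as given, but a self-contained argument runs as follows. For each finite-dimensional representation $\rho$ of $\Gamma = {\rm Gal}(L/K)$ I would introduce the Artin L-function $L(u,\rho,L/K)$, a power series in $u = q^{-s}$ given by the Euler product over the unramified primes $P$ with local factor $\det(1 - \rho(\varphi_P)\,u^{\deg P})^{-1}$. The two structural inputs are: (a) the factorization $\zeta_L(s) = \prod_{\rho} L(s,\rho,L/K)^{\dim \rho}$ over the irreducible $\rho$, together with the fact that $\zeta_K(s) = \frac{1}{(1-u)(1-qu)}$ has a simple pole at $u = q^{-1}$; and (b) Weil's theorem that for a nontrivial irreducible $\rho$ the function $L(u,\rho,L/K)$ is a \emph{polynomial} in $u$, whose reciprocal roots all have absolute value $q^{1/2}$ by the Riemann hypothesis for curves over finite fields.

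The crux is then immediate from (b): $L(u,\rho,L/K)$ is holomorphic and non-vanishing at $u = q^{-1}$ for every nontrivial irreducible $\rho$. This is precisely the step at which the positive-characteristic situation is \emph{easier} than the number-field Chebotarev theorem, where non-vanishing on the line $\Re s = 1$ demands the Hadamard--de la Vall\'ee Poussin argument; here it falls out of the Weil bound. To be careful one also wants that reducible Artin L-functions have no pole at $u = q^{-1}$; this follows from Brauer induction together with the abelian case, or directly from the cohomological description of $L(u,\rho)$.

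Next I would extract the density in the usual way. Fixing a prime $\mathfrak p$ of $L$ above an unramified $P$, the reduction to cyclic subextensions replaces the general conjugacy class $C$ by the cyclic group generated by $\varphi_{\mathfrak p}$, so that only $1$-dimensional characters $\chi$ of cyclic groups are needed. Applying $-u\frac{d}{du}\log$ to the L-function attached to such a $\chi$ yields
$$
-u\frac{d}{du}\log L(u,\chi) \;=\; \sum_{P}\;\sum_{m\ge 1}\chi(\varphi_P^{\,m})\,(\deg P)\,u^{m\,\deg P},
$$
the outer sum over unramified $P$; summing against the characters via orthogonality isolates the terms with $\varphi_P \in C$. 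Comparing the residue at $u = q^{-1}$ of this combination with the one coming from $-u\frac{d}{du}\log\zeta_K(u)$ — the only surviving pole, by the crux above — gives $\delta(S'_{L,C}) = \#C/\#\Gamma$. Since everything in sight is rational in $u$, one may equivalently compare $u^{n}$-coefficients directly and dispense with any Tauberian input. For the natural-density claim, when $L/K$ is geometric the zeta functions of $K$ and of $L$ have their \emph{only} poles at $u = q^{-1}$ (no spurious poles from a constant-field extension), so from rationality and the Weil bound one obtains
$$
\#\{P \text{ unramified} : \varphi_P = C,\ \deg P = n\} \;=\; \frac{\#C}{\#\Gamma}\cdot\frac{q^{n}}{n} \;+\; O\!\left(\frac{q^{n/2}}{n}\right);
$$
summing over $n \le x$ and dividing by the corresponding count for all unramified primes of degree $\le x$, which is of exact order $q^{x}/x$ by the prime polynomial theorem, produces the stated limit, necessarily equal to the Dirichlet density.

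The main obstacle is organizational rather than conceptual: one must discard the finitely many ramified primes (harmless for any density, since their omitted Euler factors are non-vanishing at $u = q^{-1}$), and one must know that the non-abelian Artin L-functions contribute nothing at $u = q^{-1}$ — which is exactly where invoking Weil's rationality and Riemann hypothesis, or Brauer's induction theorem reducing matters to the already-settled abelian case, does the real work. As all of this is standard and thoroughly documented, the paper simply cites \cite[Theorem 9.13 A, B]{Rosen2002GTM}.
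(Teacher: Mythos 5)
The paper itself gives no proof of this lemma: it is quoted as a classical result with the citation to Rosen's GTM 210, Theorem 9.13 A and B, and your proposal — which sketches the standard Artin $L$-function argument behind that theorem (Weil's rationality and Riemann hypothesis for curves, reduction to cyclic subextensions, character orthogonality, coefficient comparison) and then defers to the same citation — is consistent with the paper's treatment. One caveat should you ever write the sketch out in full: the claim in your step (b) that $L(u,\rho)$ is a polynomial for \emph{every} nontrivial irreducible $\rho$ fails for the characters factoring through a constant-field subextension, where $L(u,\chi)$ is essentially $Z_K(\zeta u)$ with $\zeta\neq 1$ a root of unity, hence a rational function with poles at $u=\zeta^{-1}$ and $u=\zeta^{-1}q^{-1}$; it is nevertheless holomorphic and non-vanishing at $u=q^{-1}$, which is all the Dirichlet-density argument needs, and this constant-field phenomenon is exactly why the geometric hypothesis enters only in the natural-density statement. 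Similarly, $\zeta_K(u)=1/((1-u)(1-qu))$ holds only for the rational function field; for general $K$ with constant field $\mathbb{F}_q$ there is a numerator polynomial, which does not disturb the simple pole at $u=q^{-1}$ and hence not the argument.
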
 
\begin{lem}[({\cite[Chapter VI, Corollary 7.4]{Neukirch}, see also \cite{RosenM1987ExpMath}})] \label{hil} \label{lem.principal} 
Let $K=\F_q(\theta)$ and let $L/K$ be a finite extension. 
A non-zero prime ideal $\mf{p}$ of $R_L$ completely splits in the Hilbert class field (i.e., the maximal unramified abelian extension) of $L$ if and only if $\mf{p}$  is a principal ideal of $R_L$. 
\end{lem}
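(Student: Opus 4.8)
The plan is to derive this entirely from the ideal-theoretic form of global class field theory applied to the Dedekind domain $R_L$, which is exactly what the cited references provide. First one must fix the convention: in the function-field setting the ``full'' maximal abelian extension of $L$ unramified at every finite prime is \emph{infinite} (all constant-field extensions are everywhere unramified), so ``the Hilbert class field of $L$'' has to be read as the maximal abelian extension $H/L$ that is unramified at every nonzero prime of $R_L$ and in which every place of $L$ above the place $\infty$ of $K=\F_q(\theta)$ (the one at which $1/\theta$ is a uniformizer) splits completely. The single substantive input is then the Artin reciprocity isomorphism for $R_L$: the Artin map on fractional ideals induces an isomorphism ${\rm Art}\colon {\rm Cl}(R_L)\xrightarrow{\ \sim\ }{\rm Gal}(H/L)$, uniquely determined by the property that ${\rm Art}([\mf{p}])=\varphi_\mf{p}$ for every nonzero prime $\mf{p}$ of $R_L$, where $\varphi_\mf{p}\in{\rm Gal}(H/L)$ is the Frobenius automorphism (well defined because $H/L$ is unramified at $\mf{p}$). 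Equivalently, ${\rm Art}$ arises from the Artin map $I_L\twoheadrightarrow{\rm Gal}(H/L)$ on the group of fractional ideals, whose kernel is precisely the subgroup of principal ideals.

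Granting that input, the deduction is very short. Since $H/L$ is abelian and unramified at $\mf{p}$, the decomposition group of any prime of $H$ above $\mf{p}$ is the cyclic group generated by $\varphi_\mf{p}$; hence $\mf{p}$ splits completely in $H$ if and only if this decomposition group is trivial, i.e.\ if and only if $\varphi_\mf{p}=1$ in ${\rm Gal}(H/L)$. By the characterization of ${\rm Art}$ this says ${\rm Art}([\mf{p}])=1$, and by injectivity of ${\rm Art}$ this in turn says $[\mf{p}]=1$ in ${\rm Cl}(R_L)$, which is exactly the statement that $\mf{p}$ is a principal ideal of $R_L$. Chaining these equivalences proves the lemma in both directions simultaneously.

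The main obstacle is concentrated entirely in the first paragraph, and it is not an obstacle so much as a citation: one is invoking the existence theorem of class field theory (that $H$ exists, is finite over $L$, and has Galois group canonically isomorphic to ${\rm Cl}(R_L)={\rm Pic}({\rm Spec}\,R_L)$) together with the reciprocity law (that the kernel of the Artin map is the principal ideals). In the number-field phrasing this is \cite[Chapter VI, Corollary 7.4]{Neukirch}, and the function-field counterpart is available through \cite{Rosen2002GTM} and \cite{RosenM1987ExpMath}. The only genuine care needed is bookkeeping: one must make sure that the normalization of the Hilbert class field at the infinite place(s) used here agrees with the one under which the lemma is applied in the sequel, so that ``splits completely at $\infty$'' is built into $H$ and the group appearing is the ideal class group of $R_L$ rather than a divisor class group or a ray class group; with that matched, no further argument is required.
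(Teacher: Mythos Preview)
Your argument is correct and is precisely the standard class-field-theoretic derivation; note, however, that the paper does not give any proof of this lemma at all but simply cites it from \cite[Chapter VI, Corollary 7.4]{Neukirch} and \cite{RosenM1987ExpMath}, so there is no ``paper's own proof'' to compare against. Your exposition is a faithful unpacking of what those references contain, including the important caveat about the normalization of $H$ at the infinite places.
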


\subsubsection{Rosen's lemmas} 
Let the setting be as in \Cref{ss.P0AK}, that is, $R=\F_q[\theta]$, $K=\F_q(\theta)$, etc., and let $L/K$ be a finite Galois extension with $\Gamma={\rm Gal}(L/K)$. 
Let $f(x)\in K[x]$ be a monic polynomial that completely decomposes as $f(x)=\prod_i(x-\alpha_i)$ in $L[x]$, so $f(x)$ is a product of separable polynomials. 
Put $\Gamma_i={\rm Gal}(L/K(\alpha_i)) \subset \Gamma$ for each $i$. 

Rosen's argument over $\mca{A}$ completely applies to the case over $\mca{A}_K$ to yield the following. 

\begin{lem}[(cf.\,{\cite[Lemma 3.1]{Rosen2020JNT}})] \label{lem1} \label{lem.Rosen.1}
Let $P$ be a prime of $K$ that is unramified in $L$ and 
coprime to the denominators of the coefficients of $f(x)$. 
Then $f(x)\mod P$ has a root in $R/(P)$ iff the Frobenius conjugacy class $\varphi_P\subset \Gamma$ is contained in $S_1=\bigcup_i \Gamma_i$. 
\end{lem}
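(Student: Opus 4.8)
The plan is to prove this by relating the existence of a root of $f(x) \bmod P$ in $R/(P)$ to the action of the Frobenius on the roots $\alpha_i$ of $f$. The key observation is that, after reducing modulo $P$, a root of $f$ lives in $R/(P) = \F_{q^{\deg P}}$ precisely when it is fixed by the Frobenius $\varphi_P$, i.e. by the $q^{\deg P}$-power map on the residue field.

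First I would fix a prime $\mathfrak{p}$ of $R_L$ over $(P)$, with associated Frobenius map $\varphi_\mathfrak{p} \in \Gamma$, and note that since $P$ is unramified and coprime to the denominators of the coefficients of $f(x)$, the reductions $\alpha_i \bmod \mathfrak{p}$ are well-defined elements of $R_L/\mathfrak{p}$ and give all the roots of $f(x) \bmod P$ in an algebraic closure of $R/(P)$; moreover $f(x) \bmod P$ is still separable, so these reductions are distinct (for $d = \deg P \gg 0$, or one argues this holds under the coprimality hypothesis). Then $f(x) \bmod P$ has a root in $R/(P)$ if and only if some $\alpha_i \bmod \mathfrak{p}$ lies in $R/(P) = (R_L/\mathfrak{p})^{\langle \ol\varphi_\mathfrak{p}\rangle}$, which happens if and only if $\ol\varphi_\mathfrak{p}$ fixes $\alpha_i \bmod \mathfrak{p}$, i.e. $\alpha_i^{q^d} \equiv \alpha_i \bmod \mathfrak{p}$.

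Next I would translate the condition ``$\varphi_\mathfrak{p}$ fixes $\alpha_i \bmod \mathfrak{p}$'' into ``$\varphi_\mathfrak{p} \in \Gamma_i$''. The implication $\varphi_\mathfrak{p} \in \Gamma_i \Rightarrow \varphi_\mathfrak{p}(\alpha_i) = \alpha_i \Rightarrow \alpha_i^{q^d} \equiv \alpha_i \bmod \mathfrak{p}$ is immediate from the definition of $\varphi_\mathfrak{p}$. For the converse, if $\alpha_i^{q^d} \equiv \alpha_i \bmod \mathfrak{p}$, then $\varphi_\mathfrak{p}(\alpha_i) \equiv \alpha_i \bmod \mathfrak{p}$; but $\varphi_\mathfrak{p}(\alpha_i)$ is again one of the roots $\alpha_j$, and since the $\alpha_j \bmod \mathfrak{p}$ are distinct, the congruence forces $\varphi_\mathfrak{p}(\alpha_i) = \alpha_i$ in $L$, hence $\varphi_\mathfrak{p} \in \Gamma_i$. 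Finally, since $\varphi_P$ is the conjugacy class $\{\varphi_\mathfrak{p} : \mathfrak{p} \mid P\}$ and $S_1 = \bigcup_i \Gamma_i$ is a union of conjugacy classes (because the $\Gamma_i$ for conjugate $\alpha_i$ are conjugate subgroups, so $S_1$ is stable under conjugation), the condition ``$\varphi_\mathfrak{p} \in S_1$ for some (equivalently every) $\mathfrak{p} \mid P$'' is exactly ``$\varphi_P \subset S_1$'', completing the proof.

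The main obstacle I expect is making the separability/distinctness step fully rigorous: one needs that the roots $\alpha_i \bmod \mathfrak{p}$ remain distinct, which is where the hypothesis that $f$ is a product of separable polynomials (equivalently $f$ has no multiple roots over $L$) together with the coprimality of $P$ with the discriminant-type denominators is essential. Strictly speaking this should be packaged by requiring $P$ to avoid the finitely many primes dividing the relevant discriminants and denominators, after which everything goes through; since the statement only concerns such $P$ (and the definition of Frobenian allows discarding finitely many primes), this is harmless. The rest is a routine unwinding of the definition of the Frobenius map via Galois theory of the residue extension, closely paralleling \cite[Lemma 3.1]{Rosen2020JNT}.
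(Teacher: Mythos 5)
Your proof is correct and is essentially the paper's own argument: the paper simply asserts that Rosen's proof of \cite[Lemma 3.1]{Rosen2020JNT} carries over, and that proof is exactly your reduction of the roots $\alpha_i$ modulo a prime $\mathfrak{p}\mid(P)$ of $R_L$ together with the observation that $f \bmod P$ has a root in $R/(P)$ iff $\varphi_{\mathfrak{p}}$ fixes some $\alpha_i$, i.e. $\varphi_{\mathfrak{p}}\in\Gamma_i$, with conjugation-stability of $S_1$ handling the choice of $\mathfrak{p}$. Your caveat about distinctness of the reductions is genuinely needed for the ``only if'' direction (e.g. $f=x^2-u^2D$ with $P\mid u$, $D$ a nonsquare mod $P$, shows the stated hypotheses alone do not suffice), and excluding the finitely many $P$ dividing the differences $\alpha_i-\alpha_j$, as you propose, is the right fix and is harmless for the density application.
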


\begin{lem}[(cf.\,{\cite[Lemma 3.2]{Rosen2020JNT}})] \label{lem2} \label{lem.Rosen.2}
Define 
$S_2=\bigcup_i\{\sigma\in\Gamma \mid C_{\Gamma}(\sigma)\subset\Gamma_i\}$,
where $C_\Gamma(\sigma)$ denotes the centralizer of $\sigma$ in $\Gamma$. 
Then, for every $g\in A(L)$, we have 
$\{\sigma\in \Gamma\mid f(g(\sigma))=0\}\subset S_2$, 
and there exists $g\in A(L)$ for which the equality of sets holds. 
\end{lem}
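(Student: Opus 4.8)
The plan is to prove the two assertions of \Cref{lem.Rosen.2} separately, following Rosen's argument over $\mca{A}$ essentially verbatim. For the inclusion $\{\sigma\in\Gamma\mid f(g(\sigma))=0\}\subseteq S_2$, I would fix $g\in A(L)$ and suppose $f(g(\sigma))=0$, so that $g(\sigma)=\alpha_i$ for some $i$. For any $\tau\in C_\Gamma(\sigma)$ we have $\tau\sigma\tau^{-1}=\sigma$, so the defining relation of $A(L)$ gives $\alpha_i=g(\sigma)=g(\tau\sigma\tau^{-1})=\tau(g(\sigma))=\tau(\alpha_i)$; hence $\tau$ fixes $\alpha_i$, i.e.\ $\tau\in{\rm Gal}(L/K(\alpha_i))=\Gamma_i$. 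As $\tau$ was arbitrary, $C_\Gamma(\sigma)\subseteq\Gamma_i$, so $\sigma\in S_2$. This direction is immediate from the cocycle condition and involves no choices; note also that $\sigma\in C_\Gamma(\sigma)$ always, whence $S_2\subseteq\bigcup_i\Gamma_i=S_1$, the set appearing in \Cref{lem.Rosen.1}.

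For the existence of some $g\in A(L)$ realising the equality of sets, I would first record that $S_2$ is a union of conjugacy classes of $\Gamma$: conjugation by $\tau$ carries $C_\Gamma(\sigma)$ to $\tau C_\Gamma(\sigma)\tau^{-1}=C_\Gamma(\tau\sigma\tau^{-1})$ and $\Gamma_i={\rm Gal}(L/K(\alpha_i))$ to $\tau\Gamma_i\tau^{-1}={\rm Gal}(L/K(\tau\alpha_i))$, while $\tau\alpha_i$ is again a root of $f$ since $f\in K[x]$. Then I would build $g$ one conjugacy class at a time. On a class contained in $S_2$, choose a representative $\sigma_0$ and, by definition of $S_2$, an index $i$ with $C_\Gamma(\sigma_0)\subseteq\Gamma_i$; since every element of $C_\Gamma(\sigma_0)$ fixes $\alpha_i$, the rule $g(\rho\sigma_0\rho^{-1}):=\rho(\alpha_i)$ is well defined on the class, because $\rho_1\sigma_0\rho_1^{-1}=\rho_2\sigma_0\rho_2^{-1}$ forces $\rho_2^{-1}\rho_1\in C_\Gamma(\sigma_0)\subseteq\Gamma_i$, hence $\rho_1(\alpha_i)=\rho_2(\alpha_i)$. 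On a class not contained in $S_2$, I would fix once and for all a scalar $c\in K$ with $f(c)\neq 0$ — possible since $K$ is infinite while $f$ has finitely many roots — and set $g\equiv c$ there. One then checks $g(\sigma\tau\sigma^{-1})=\sigma(g(\tau))$ for all $\sigma,\tau\in\Gamma$: if the class of $\tau$ lies in $S_2$ this follows from $\sigma(\rho(\alpha_i))=(\sigma\rho)(\alpha_i)$, and otherwise from $\sigma(c)=c$. By construction $f(g(\sigma))=0$ exactly when $\sigma$ lies in an $S_2$-class, giving $\{\sigma\mid f(g(\sigma))=0\}=S_2$.

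The routine parts are the verifications just indicated. The step that needs genuine care — and which I expect to be the main obstacle — is the well-definedness of $g$ on an $S_2$-class: one must reconcile the freedom in the choice of the index $i$ (several $\Gamma_i$ may contain $C_\Gamma(\sigma_0)$) and of the representative $\sigma_0$ with the fact that the $\Gamma$-equivariant extension over the whole class is then forced, and verify that no contradiction arises. The containment $C_\Gamma(\sigma_0)\subseteq\Gamma_i$ is precisely what makes the extension single-valued, which is exactly why $S_2$ — rather than the a priori smaller set $S_1$ of \Cref{lem.Rosen.1} — is the correct set here. I would also confirm that taking $c\in K$, rather than merely in the fixed field $L^{C_\Gamma(\sigma_0)}$, already suffices on the remaining classes, so that no additional case analysis on centralizers is needed.
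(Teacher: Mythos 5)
Your proof is correct and is essentially the argument the paper relies on: the paper gives no separate proof but states that Rosen's proof of \cite[Lemma 3.2]{Rosen2020JNT} carries over verbatim, and your two steps (the centralizer/cocycle argument for the inclusion, and the class-by-class equivariant construction of $g$ with a constant value $c\in K$, $f(c)\neq 0$, off $S_2$) are exactly that argument adapted to $K=\F_q(\theta)$.
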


The following is purely group theoretic, where we replace $\Gamma'$ in \cite[Lemma 3.3]{Rosen2020JNT} with a general $S'$. 

\begin{lem}[(cf.\,{\cite[Lemma 3.3]{Rosen2020JNT}})] \label{lem3} \label{lem.Rosen.3} 
Let $\Gamma$ be a finite group and $A$ a finite abelian group. 
Consider the wreath product 
$\Gamma':=A^{\#\Gamma}\rtimes\Gamma$ 
with a natural surjection $\pi:\Gamma'\surj \Gamma;$ $(f,\sigma)\mapsto\sigma$. 
Let $S\subset \Gamma$ be a subset and put $S'=\pi^{-1}(S)\subset \Gamma'$.  
Then there are at least \[(1-\frac{\#\Gamma}{\#A})\#S'\] elements $\xi=(f,\sigma)\in S'$ that satisfy  
\[\pi(C_{\Gamma'}(\xi))\subset\langle\sigma\rangle.\]  
\end{lem}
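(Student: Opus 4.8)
The plan is to exhibit enough elements $\xi = (f,\sigma) \in S'$ explicitly by controlling the second coordinate $\sigma$ and choosing the first coordinate $f \in A^{\#\Gamma}$ wisely. The key observation is that $\pi(C_{\Gamma'}(\xi)) \subset \langle\sigma\rangle$ is a condition about how elements of $\Gamma'$ lying over $C_\Gamma(\sigma)$ can commute with $\xi$; since $\pi$ restricted to $C_{\Gamma'}(\xi)$ always lands inside $C_\Gamma(\sigma)$ (because $\pi$ is a homomorphism), the content is that we want no element of $\Gamma'$ over $\tau \in C_\Gamma(\sigma) \setminus \langle\sigma\rangle$ to centralize $\xi$. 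For a fixed $\sigma$ and a fixed such $\tau$, I would write out the commutation relation $(g,\tau)(f,\sigma) = (f,\sigma)(g,\tau)$ in the wreath product: this becomes an affine-linear condition on $f \in A^{\#\Gamma}$ (with coefficients depending on $g$, $\sigma$, $\tau$), namely $f - \tau\cdot f = (\text{something involving } g, \sigma)$ up to the appropriate twist, where $\tau$ acts on $A^{\#\Gamma}$ by permuting coordinates via left translation. The set of $f$ for which some $(g,\tau)$ centralizes $(f,\sigma)$ is thus contained in a union of cosets of the subgroup $\{f : \tau\cdot f = f\}$ — i.e., coordinate-constant-on-$\langle\tau\rangle$-orbits functions — whose index in $A^{\#\Gamma}$ is $\#A^{(\text{number of } \langle\tau\rangle\text{-orbits not contributing})}$; crucially this index is at least $\#A$ whenever $\tau \neq e$, so each ``bad'' $\tau$ eliminates at most a $1/\#A$ fraction of the fiber over $\sigma$.

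Concretely, the steps I would carry out are: (1) fix $\sigma \in S$ and recall $C_{\Gamma'}(\xi)$ maps into $C_\Gamma(\sigma)$ under $\pi$ for any $\xi$ over $\sigma$, so the only way to violate the conclusion is to have some $\tau \in C_\Gamma(\sigma)$, $\tau \notin \langle\sigma\rangle$, and some $g \in A^{\#\Gamma}$ with $(g,\tau) \in C_{\Gamma'}(f,\sigma)$; (2) for each such $\tau$, compute the commutator condition in $A^{\#\Gamma}\rtimes\Gamma$ and show that the set of $f$ admitting a compatible $g$ forms a union of translates of the fixed subgroup $(A^{\#\Gamma})^{\langle\tau\rangle}$ (functions constant on $\langle\tau\rangle$-cosets), which has index $\geq \#A$ in $A^{\#\Gamma}$ since $\tau$ has order $\geq 2$ and acts freely by translation, hence has at least two orbits collapsed — actually index exactly $\#A^{\#\Gamma - \#\Gamma/|\langle\tau\rangle|\cdot(|\langle\tau\rangle|-1)}$, in any case $\geq \#A$; (3) sum over the (at most $\#C_\Gamma(\sigma) \leq \#\Gamma$, but really $< \#\Gamma$) choices of $\tau$ to bound the number of bad $f$ over $\sigma$ by $(\#\Gamma/\#A)\cdot\#A^{\#\Gamma} = (\#\Gamma/\#A)\cdot \#(\text{fiber})$; (4) sum over $\sigma \in S$, using $\#S' = \#S \cdot \#A^{\#\Gamma}$, to get at least $(1 - \#\Gamma/\#A)\#S'$ good elements.

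The main obstacle I anticipate is step (2): getting the commutator computation in the wreath product exactly right, including the twisted action, and then correctly identifying the solution set for $f$ as a coset (or empty) of a subgroup of index at least $\#A$. One has to be careful that the ``number of bad $\tau$'s'' is bounded — the naive bound is $\#C_\Gamma(\sigma) - \#\langle\sigma\rangle \leq \#\Gamma - 1$, but for the stated constant $1 - \#\Gamma/\#A$ one needs the union over $\tau$ of bad-$f$-sets to have size $\leq (\#\Gamma/\#A)\#A^{\#\Gamma}$; since there are at most $\#\Gamma$ relevant $\tau$ and each bad set has size at most $\#A^{\#\Gamma}/\#A$, this works out, but one should check whether overcounting or the precise index estimate causes any slack or tightness issues. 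A secondary subtlety is handling the case $\#A \leq \#\Gamma$, where the bound $(1-\#\Gamma/\#A)\#S'$ is vacuous (non-positive) and the statement is trivially true — this should be remarked at the start so the interesting regime $\#A > \#\Gamma$ is the only one requiring work.
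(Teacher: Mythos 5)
Your counting frame (bound the bad elements pairwise in $(\sigma,\tau)$, sum over at most $\#\Gamma$ values of $\tau$ and over $\sigma\in S$, conclude at most $\frac{\#\Gamma}{\#A}\#S'$ bad elements) is exactly the paper's, but there is a genuine gap at the step you yourself flag as the main obstacle. Writing the product as $(a,x)(b,y)=(a+x\cdot b,\,xy)$, the condition that some $(g,\tau)$ centralize $(f,\sigma)$ is $\sigma\tau=\tau\sigma$ together with $(1-\tau)f=(1-\sigma)g$ for some $g$; so the bad set for a fixed $\tau$ is the subgroup $B_\tau=\{f\in A^{\#\Gamma}\mid (1-\tau)f\in \mathrm{Im}(1-\sigma)\}$, which contains $\ker(1-\tau)=(A^{\#\Gamma})^{\langle\tau\rangle}$ and is indeed a union of cosets of it. Your bound ``each bad $\tau$ eliminates at most a $1/\#A$ fraction of the fiber'' is deduced only from the index of $\ker(1-\tau)$, with no control on \emph{how many} cosets occur in the union; that is a non sequitur, since a union of cosets of a small subgroup can be all of $A^{\#\Gamma}$. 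Moreover your justification uses only $\tau\neq e$, never $\tau\notin\langle\sigma\rangle$: for $\tau=\sigma^k$ one has $(1-\sigma^k)f=(1+\sigma+\cdots+\sigma^{k-1})(1-\sigma)f\in\mathrm{Im}(1-\sigma)$ for \emph{every} $f$, so $B_\tau=A^{\#\Gamma}$, and the same reasoning would yield a false ``$1/\#A$ fraction'' bound there. Hence the hypothesis $\tau\notin\langle\sigma\rangle$ must enter the size estimate itself, and in your write-up it does not.

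What is needed, and what the paper imports from the first half of Rosen's Lemma~3.3, is the bound $\#B_\tau\le \#A^{\#\Gamma-1}$ for $\tau\in C_\Gamma(\sigma)\setminus\langle\sigma\rangle$. One way to get it: since $\mathrm{Map}(\Gamma,A)$ is an induced $\Gamma$-module, additive Hilbert 90 (cohomological triviality for the cyclic group $\langle\sigma\rangle$) gives $\mathrm{Im}(1-\sigma)=\ker N_\sigma$ with $N_\sigma=\sum_i\sigma^i$; using $\sigma\tau=\tau\sigma$, membership in $B_\tau$ becomes $N_\sigma(1-\tau)f=0$. Testing this homomorphism on functions supported at a single point and evaluating at a suitable point of $\Gamma$, the freeness of the regular action and the disjointness of the cosets $\langle\sigma\rangle$ and $\langle\sigma\rangle\tau$ (this is precisely where $\tau\notin\langle\sigma\rangle$ is used) show that $f\mapsto N_\sigma(1-\tau)f$ composed with that evaluation surjects onto $A$; hence $B_\tau$, contained in its kernel, has index at least $\#A$. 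With this inserted, your steps (3)--(4) reproduce the paper's count verbatim, and your remark that the statement is vacuous when $\#A\le\#\Gamma$ is correct but not needed.
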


\begin{proof} 
Let $\xi=(f,\sigma)\in \Gamma'=A^{\#\Gamma}\rtimes \Gamma$. 
If 
\[
\pi(C_{\Gamma'}(\xi))\not\subset \langle \sigma\rangle_{\Gamma}, \tag{$\star$}
\]
then there exists some $\tau\in \pi(C_{\Gamma'}(\xi))$ such that $\tau\not\in\langle \sigma\rangle_{\Gamma}$. 
By the former half of the proof of \cite[Lemma 3.3]{Rosen2020JNT} using additive Hilbert's Satz 90, 
for each $\sigma,\tau\in \Gamma$, there are at most $\#A^{\#\Gamma-1}$ elements $f\in A^{\#\Gamma}$ such that $\xi=(f,\sigma)$ satisfies ($\star$). 
Thus the number of $\xi=(f,\sigma)\in S'$ satisfying ($\star$) is at most 
\[\#\Gamma \ccdot \#S \ccdot \#A^{\#\Gamma-1}
=\frac{\#\Gamma}{\#A} \ccdot \#S\ccdot \#A^{\#\Gamma}
=\frac{\#\Gamma}{\#A}\#S'.\] 
Hence there are at least $(1-\frac{\#\Gamma}{\#A})\#S'$ elements 
$\xi=(f,\sigma)\in S'$ satisfying $\pi(C_{\Gamma'}(\xi))\subset \langle \sigma\rangle_{\Gamma}$. 
\end{proof} 

\subsubsection{Wreath product extensions} 
The Kummer/Artin--Schreier theory proves the following. 
\begin{lem} \label{lem.wreathext} 
Let $K=\F_q(\theta)$ as before and let $L/K$ be a finite Galois extension with $\Gamma={\rm Gal}(L/K)$. 
Let $r\in \Z_{>0}$. 
Then there is a finite Galois extension $L'/K$ with $L\subset L'$ such that 
${\rm Gal}(L'/K)\cong A^{\#\Gamma} \rtimes \Gamma$ with $A=(\Z/2\Z)^r$.
\end{lem}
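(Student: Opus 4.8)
The plan is to realize the abelian base group $A^{\#\Gamma} = ((\Z/2\Z)^r)^{\#\Gamma}$ as the Galois group of a $\Gamma$-equivariant elementary abelian $2$-extension of $L$ built by adjoining square roots (or, in characteristic $2$, by Artin--Schreier extensions; since $q$ may be even we must treat both cases, guided by the characteristic of $\F_q$). First I would fix the regular representation: $\Gamma$ acts on itself by left translation, so $A^{\#\Gamma} = \bigoplus_{\sigma\in\Gamma} A_\sigma$ carries the induced permutation action, and the wreath product $A^{\#\Gamma}\rtimes\Gamma$ is exactly the semidirect product for this action. The goal is therefore to produce a Galois extension $L'/L$, normal already over $K$, with $\mathrm{Gal}(L'/L)\cong A^{\#\Gamma}$ and with the outer conjugation action of $\Gamma=\mathrm{Gal}(L/K)$ on $\mathrm{Gal}(L'/L)$ matching the regular permutation action; then the extension $L'/K$ is Galois with group the desired wreath product (the extension $1\to A^{\#\Gamma}\to\mathrm{Gal}(L'/K)\to\Gamma\to 1$ splits because $\mathrm{Gal}(L'/L)$ has exponent $2$ and one can exhibit a complement, or more simply because we will construct $L'$ so that a lift of $\Gamma$ is visible).

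The construction: pick an element $a\in L$ that generates a normal basis $\{\sigma(a):\sigma\in\Gamma\}$ of $L$ over $K$ (normal basis theorem, already invoked for Lemma \ref{lem.Ginv-perm}); in the characteristic-$\neq 2$ case choose $r$ elements $a_1,\dots,a_r\in L$ whose square classes, together with all their $\Gamma$-conjugates, are multiplicatively independent in $L^\times/(L^\times)^2$ — a counting/genericity argument over the infinite field $L$ guarantees such a choice, e.g. take $a_j = a + c_j$ for suitable constants or use distinct monic irreducibles of $R_L$. Set $L' = L(\sqrt{\sigma(a_j)}: \sigma\in\Gamma,\ 1\le j\le r)$. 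By Kummer theory $\mathrm{Gal}(L'/L)\cong(\Z/2\Z)^{r\#\Gamma}$, with one $\Z/2\Z$ factor for each pair $(\sigma,j)$, and since the set of chosen Kummer generators is permuted by $\Gamma$ exactly according to left translation on $\Gamma$ (and trivially on the index $j$), the conjugation action of $\Gamma$ on $\mathrm{Gal}(L'/L)$ is the regular permutation action on $A^{\#\Gamma}$ with $A=(\Z/2\Z)^r$. Hence $\mathrm{Gal}(L'/K)\cong A^{\#\Gamma}\rtimes\Gamma$. In characteristic $2$ one runs the same argument with Artin--Schreier extensions: replace $\sqrt{b}$ by a root of $x^2-x-b$ and square classes by classes in $L/\wp(L)$ where $\wp(x)=x^2-x$, choosing $\Gamma$-conjugates of $r$ elements that are independent modulo $\wp(L)$.

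The main obstacle is the independence (non-degeneracy) step: one must verify that the $r\cdot\#\Gamma$ Kummer (resp. Artin--Schreier) classes are genuinely independent in $L^\times/(L^\times)^2$ (resp. $L/\wp(L)$), so that $[L':L]$ is the full $2^{r\#\Gamma}$ and the permutation action is faithful on each coordinate; a collapse would still give a wreath-product-like group but with a smaller $A$, not the prescribed $A=(\Z/2\Z)^r$. Since $L$ is a global function field, $L^\times/(L^\times)^2$ is infinite, and one can engineer independence concretely — for instance by taking the $a_j$ supported at $\#\Gamma\cdot r$ distinct primes of $R_L$ permuted correctly by $\Gamma$, or by invoking the standard fact that a sufficiently generic finite tuple in $L^\times$ is independent mod squares together with its conjugates. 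The remaining points (normality of $L'/K$, which is automatic since the generating set of $L'$ over $L$ is $\mathrm{Gal}(\ol K/K)$-stable, and the splitting of the group extension, which follows from the explicit description of the conjugation action) are routine.
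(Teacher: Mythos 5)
Your outline is the same as the paper's (Kummer theory in odd characteristic, Artin--Schreier in characteristic $2$, adjoin roots of a full $\Gamma$-orbit of $r$ classes, identify the conjugation action with the regular permutation action), but the step you yourself flag as the main obstacle --- producing $\beta_1,\dots,\beta_r\in L^\times$ whose $r\,\#\Gamma$ conjugates $\sigma(\beta_i)$ are independent in $L^\times/(L^\times)^2$ (resp.\ in $L/\wp(L)$) --- is left essentially unproved, and the remedies you sketch do not stand as written. ``Distinct monic irreducibles of $R_L$'' is problematic: $R_L$ is the integral closure of $\F_q[\theta]$ in $L$ and is in general not a PID, so an element ``supported at one prime'' is a generator of a principal prime ideal, whose existence must be argued; moreover, for the orbit $\{\sigma(\beta_i)\}_{\sigma\in\Gamma}$ to consist of $\#\Gamma$ pairwise coprime elements --- which is what makes the valuation argument work and what makes $\mathrm{Gal}(L'/L)$ a \emph{free} $\F_2[\Gamma]$-module of rank $r$ rather than a smaller quotient --- the supporting prime must have trivial decomposition group, i.e.\ must lie over a prime of $K$ that splits completely in $L$; your phrase ``permuted correctly by $\Gamma$'' hides exactly this requirement. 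There is no citable ``standard genericity fact'' that a tuple together with all its conjugates is independent mod squares, and the suggestion $a_j=a+c_j$ is unsubstantiated. This is precisely where the paper invokes its two nontrivial inputs (Lemmas \ref{lem.Cheb} and \ref{lem.principal}): Chebotarev supplies primes $P_1,\dots,P_r$ of $K$ splitting completely in the Hilbert class field of $L$, the principal-ideal characterization gives generators $\beta_i$ of the primes of $L$ above them, and independence then follows from coprimality and valuations; in characteristic $2$ the paper takes $\beta_i=1/\vartheta^{2i-1}$ for a generator $\vartheta$ of a principal prime and uses $v(x^2+x)=2v(x)$ for $v(x)<0$. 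One could avoid the class-field input for principality by choosing $\beta_i$ via weak approximation (odd valuation at one completely split prime, even valuation at all its conjugates and at the other chosen orbits), but the existence of completely split primes still requires a density-type argument, so some such input is unavoidable and must be stated.

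A smaller point: ``$\mathrm{Gal}(L'/L)$ has exponent $2$, so one can exhibit a complement'' is not by itself a reason for the extension $1\to A^{\#\Gamma}\to\mathrm{Gal}(L'/K)\to\Gamma\to 1$ to split. The valid justifications are either the explicit section (the coherent system of roots, sending $\tau$ to the unique lift with $\tau'(\sqrt{\sigma(\beta_i)})=\sqrt{\tau\sigma(\beta_i)}$, as the paper does), or the vanishing $H^2(\Gamma,\F_2[\Gamma]^r)=0$ by Shapiro's lemma once the module is known to be free --- and the freeness is exactly the independence statement you have not established.
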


\begin{proof} 
Suppose $2\nmid q$. 
By Lemmas \ref{lem.Cheb}, \ref{lem.principal}, we may take distinct primes $P_1,\ldots, P_r$ of $K$ that completely split in the Hilbert class field of $L$. 
Let $\beta_i\in R_L$ be a generator of a prime of $L$ over each $P_i$. 
Then the tuple $(\sigma(\beta_i))_{i,\sigma}$ indexed by $1\leq i\leq r$ and $\sigma\in \Gamma$ is independent in the multiplicative group $L^\times/(L^\times)^2$, 
since otherwise there is a subset $\{w_1,\ldots, w_k\}\subset \{\sigma(\beta_i) \mid i,\sigma\}$ with $\prod_i w_i\in (L^\times)^2$, so we have $\vartheta\in L^\times$ with $\prod_i w_i=\vartheta^2$, 
but the $w_1$-adic valuations of both sides tell that this never occurs, noting that $w_i$'s are coprime.  
Hence by the Kummer theory, if we put $L'=L(\{\sqrt{\sigma(\beta_i)}\mid \sigma\in \Gamma, 1\leq i\leq r\})$, then $L'/L$ is a Galois extension with $\Gamma':={\rm Gal}(L'/L)\cong (\Z/2\Z)^{rd}$, $d:=[L:K]$.  

Consider the short exact sequence 
\begin{equation}
1\to A^{\#\Gamma} \to \Gamma'\underset{\pi}{\to} \Gamma\to 1. \tag{$\star$}
\end{equation}
If $\tau\in \Gamma$ and $\tau'\in \pi^{-1}(\tau)$, 
then we see that $\tau'(\sqrt{\sigma(\beta_i)})=\pm \sqrt{\tau\sigma(\beta_i)}$. 
Since $\#\pi^{-1}(\tau)=[L':L]=2^{rd}$, 
there is a unique $\tau'$ such that 
$\tau'(\sqrt{\sigma(\beta_i)})=\sqrt{\tau\sigma(\beta_i)}$ for every $(\sigma,i)$. 
Define a map $s:\Gamma\to \Gamma'$ by sending $\tau$ to such a unique $\tau'$. 
This $s$ is a group homomorphism; For each $\tau_1,\tau_2\in \Gamma$, 
we have $s(\tau_1\tau_2)(\sqrt{\sigma(\beta_i)})
=\sqrt{\tau_1\tau_2\sigma(\beta_i)}
=s(\tau_1)(\sqrt{\tau_2\sigma(\beta_i)})
=s(\tau_1)s(\tau_2)(\sqrt{\sigma(\beta_i)})$ for every $(\sigma,i)$, 
so $s(\tau_1\tau_2)=s(\tau_1)s(\tau_2)$. 
Thus ($\star$) admits a splitting, and hence $\Gamma'\cong A^{\#\Gamma}\rtimes \Gamma$. 

Suppose instead $2\mid q$. 
Put $h(x)=x^2+x$. Then the quotient $L/h(L)$ of the additive group is an infinite group. 
Indeed, take any element $\vartheta$ of $R_L$ generating a principal prime ideal 
and put $\beta_i=\frac{1}{\vartheta^{2i-1}}$ for $1\leq i\leq r$. 
Let $\varpi$ denote the valuation with $\varpi(\vartheta)=1$. 
If $x\in L$ satisfies $\varpi(x)=m\in \Z_{<0}$, then $\varpi(x^2+x)=2m$. 
So, if $0<i<j$, then $\beta_j-\beta_i \not\in h(L)$. 
Thus, the tuple $(\sigma(\beta_i))_{\sigma,i}$ is independent in $L/h(L)$. 
Choose a root $\gamma_{\sigma,i}$ of $x^2+x-\sigma(\beta_i)$ for each $(\sigma,i)$. 
Then, by the Artin--Schreier theory (cf.\,\cite[Theorem 15]{McCarthy1991book}), if we put $L'=L(\{\gamma_{\sigma,i}\mid \sigma,i\})$, 
then $L'/L$ is a Galois extension with ${\rm Gal}(L'/L)\cong (\Z/2\Z)^{rd}$, $d=[L:K]$. A similar argument assures  $\Gamma'\cong A^{\#\Gamma}\rtimes \Gamma$. 
\end{proof}

\subsection{Main proof} 

\begin{proof}[Proof of {\rm \Cref{thm.density.AK}}]
Let $f(x)\in K[x]$ be a product of separable polynomials. 
Let $(a_P)_P\in \prod_P R/(P)$ with $[(a_P)_P]\in \mca{P}^0_{\mca{A}_K}$. Then we clearly have 
\begin{equation}
\delta(\{P\mid f(a_P)=0\}) \leq 
\delta(\{P\mid f\text{\ has a root in\ }R/(P)\}). \tag{$\ast$}
\end{equation}
Let $L/K$ and $g\in A(L)$ be as in \Cref{thm.def.AK} (2), that is, we have 
$a_P=g(\varphi_P)\mod P$ for almost all $P$'s. 
Suppose in addition $f(x)=\prod_i(x-\alpha_i)$ in $L[x]$, 
and put $\Gamma_i={\rm Gal}(L/K(\alpha_i))$, 
$S_1=\bigcup_i \Gamma_i$, $S_2=\bigcup_i\{\sigma\in\Gamma \mid C_{\Gamma}(\sigma)\subset\Gamma_i\}$ as before, 
so we have $S_2\subset S_1\subset \Gamma$.   

By Lemmas \ref{lem.Rosen.1}, \ref{lem.Rosen.2} and the Chebotarev density theorem (\Cref{lem.Cheb}), 
we have
\begin{gather*}
\delta(\{P\mid f\text{\ has a root in\ }R/(P)\})=\frac{\#S_1}{\#\Gamma},\\
\max_{g\in A(L)}\delta(\{P\mid f(a_P)=0\})= \max_{g\in A(L)} \delta(\{P\mid f(g(\varphi_P))\mod P =0)=\frac{\#S_2}{\#\Gamma}. 
\end{gather*}
Note that the value $\frac{\#S_1}{\#\Gamma}$ does not depend on the choice of $L$, 
while $\frac{\#S_2}{\#\Gamma}$ does.

If $f(x)$ has no root in $K$, then we have $\Gamma_i\subsetneq \Gamma$ for every $i$, and $C_\Gamma(e)=\Gamma \not\subset \Gamma_i$, so $e\not\in S_2$. By $e\in S_1\setminus S_2$, we obtain $S_2\subsetneq S_1$. 
Thus, the inequality $(\ast)$ is strict. Hence no element of $\mca{P}^0_{\mca{A}_K}$ realizes the supremum.  

Let $r\in \Z_{>0}$. Then by \Cref{lem.wreathext}, there exists a Galois extension $L'/K$ with $L\subset L'$ such that $\Gamma':={\rm Gal}(L'/K) \cong (\Z/2\Z)^{r\#\Gamma}\rtimes \Gamma$ 
with the projection $\pi:\Gamma'\surj \Gamma;$ $(f,\sigma)\mapsto \sigma$. 
Put $\Gamma_i'=\pi^{-1}(\Gamma_i)={\rm Gal}(L'/K(\alpha_i))\subset \Gamma'$, $S_1'=\bigcup_i \Gamma_i'=\pi^{-1}(S_1)$, and 
$S_2'=\bigcup_i \{\sigma\in \Gamma'\mid C_{\Gamma'}(\sigma)\subset \Gamma_i'\}=\pi^{-1}(S_2)$. 
Then by \Cref{lem.Rosen.3}, there are at least 
$(1-\frac{\#\Gamma}{2^r})\#S_1'$ elements $\sigma\in S_1'$ satisfying $\pi(C_{\Gamma'}(\sigma))\subset \langle \pi(\sigma)\rangle_\Gamma$. 
Thus we have 
\[(1-\frac{\#\Gamma}{2^r})\#S_1'\leq \#S_2' \leq \#S_1',\]
hence $(1-\frac{\#\Gamma}{2^r})\frac{\#S_1'}{\#\Gamma'}\leq \frac{\#S_2'}{\#\Gamma'} \leq \frac{\#S_1'}{\#\Gamma'}$. 
If we take $r\to \infty$, then we have $(1-\frac{\#\Gamma}{2^r})\to 1$,  
while $\frac{\# S_1'}{\#\Gamma'}=\frac{\# S_1}{\#\Gamma}$ does not depend on $r$, so we obtain $\frac{\# S_2'}{\#\Gamma'}\to \frac{\# S_1}{\#\Gamma}$.  
This completes the proof. 
\end{proof}

\section{Artin $t$-motives and $K^{\rm sep}$} \label{sec.t-motive} 

Recall Remarks \ref{rem.PA0.motive} and \ref{rem.t-motive}. 
Rosen's ring $\mca{P}_\mca{A}^0$ is regarded as a finite analogue of $\ol{\Q}$ from the viewpoint of the periods of motives (algebraic varieties). 
Here, we discuss periods in positive characteristic to partially support a similar interpretation in our case. 

The notion of \emph{$t$-motives} was introduced by Anderson \cite{Anderson1986Duke} in connection with $t$-modules, which are analogues of abelian varieties with complex multiplication. 
Taelman \cite{Taelman2009JNT} introduced \emph{Artin t-motives} as a subclass of analytically trivial $t$-motives, thereby highlighting the Galois-theoretic structure within the Tannakian framework established for analytically trivial $t$-motives.
The history of \emph{periods} of $t$-motives traces back to Carlitz's work in the 1930s, whereas the modern framework is due to many people, including Papanikolas \cite{Papanikolas2008Invent}. 
For more on $t$-motives and related topics, see the proceedings volume \cite{BockleGossHartlPapanikolas2020}. 

Let $K=\F_q(\theta)$ as before.  
(In what follows, this $K$ may be replaced by its separable extension.)  
Define a map $\tau$ on $K^{\rm sep}(t)=K^{\rm sep}\otimes_{\F_q}\F_q(t)$ by $a\otimes f\mapsto a^q\otimes f$.  
Let $M$ be an \emph{effective $t$-motive} over $K$ in the sense of Taelman, 
namely, $M$ is a free and finitely generated $K[t]$-module endowed with an endomorphism $\sigma:M\to M$ satisfying 
$\sigma(fm)=\tau(f)\sigma(m)$ for all $f\in K[t]$ and $m\in M$, such that,  
${\rm det}\,\sigma$ is a power of $t-\theta$ up to multiplication by units in $K$. 
Let $\mf{m}=(m_j)_j$ be a $K[t]$-basis of $M$ and let $\Phi\in {\rm M}_r(K[t])\cap\, {\rm GL}_r(K(t))$ denote the matrix presenting the action of $\sigma$ on $\mf{m}$ in the sense that, for each $\bm{a}=(a_i)_i\in K[t]^r$, we have $\sigma:\mf{m} \bm{a}\mapsto \mf{m}\Phi \tau(\bm{a})$ 
with $\tau(\bm{a})=(\tau(a_i))_i$. 

\begin{eg} (1) The trivial $t$-motive $\mathbf{1}$ is the $K[t]$-module $K[t]$
with $\sigma(a)=\tau(a)$ 
and  $\Phi=(1)$. 

(2) \emph{The Carlitz motive} $C$ is the $K[t]$-module $K[t]e$ with 
$\sigma(ae)=\tau(a)(t-\theta)e$ and $\Phi=(t-\theta)$. 
\end{eg}

Let $\C_\infty=\wh{\ol{K_\infty}}=\wh{\ol{\F_q(\!(1/\theta)\!)}}$ be a complete and algebraically closed field as in \Cref{rem.t-motive} 
and let $\C_\infty\{t\}$ denote the Tate algebra, 
that is, the ring of formal power series whose coefficients converge to 0. 
Let $M$ be an effective $t$-motive with $\sigma,$ $\mf{m},$ and $\Phi$. 
If there is $\Psi\in {\rm GL}_r({\rm Frac}\,\C_\infty\{t\})$ such that $\tau(\Psi^{\top})=\Psi^{\top}\Phi$, then $M$ is said to be \emph{rigid analytically trivial} 
and $\Psi$ is called a \emph{rigid analytic trivialization} of $(\mf{m},\Phi)$. 
If an entry of $\tau(\Psi^{-1})$ converges at $t=\theta$, then the limit value is called a \emph{period} of $M$. 
In fact, the limit matrix $\tau(\Psi^{-1})_{t=\theta}$ may be interpreted as a presentation matrix of the comparison isomorphism between the de Rham cohomology and the Betti cohomology of $M$.  

An effective $t$-motive $M$ is called an \emph{Artin $t$-motive} if $M\otimes_K K^{\rm sep}\cong r\bm{1}\otimes_K K^{\rm sep}$ with $r\in \Z_{\geq 1}$, where $r\bm{1}$ denotes the effective $t$-motive whose $\sigma$-action on some basis is presented by $I_r\in {\rm M}_r(K[t]) \cap {\rm GL}_r(K(t))$. 
If an effective $t$-motive $M$ with $(\mf{m},\Phi)$ admits a rigid analytic trivialization $\Psi \in {\rm GL}_r(K^{\rm sep}[t])$, then $M$ is an Artin $t$-motive. Indeed, 
if we put $\mf{w}=\mf{m}(\Psi^{\top})^{-1}$, then we have 
$\mf{w}\Psi^{\top}=\mf{m}$, 
$\mf{m}\Phi=\sigma(\mf{m})
=\sigma(\mf{w}\Psi^{\top})
=\sigma(\mf{w})\tau(\Psi^{\top})
=\sigma(\mf{w})\Psi^{\top}\Phi$, and hence 
$\sigma(\mf{w})=\mf{m}(\Psi^{\top})^{-1}=\mf{w}$.
This implies that $M$ is an Artin $t$-motive, by \cite[Proposition 4.1.3]{Taelman2009JNT}. 

\begin{prop} \label{prop.ksep}
The set of all periods of all Artin $t$-motives over $K$ 
generates $K^{\rm sep}$. 
\end{prop}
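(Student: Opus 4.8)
The plan is to prove both inclusions $K^{\rm sep} \subseteq \langle\text{periods of Artin }t\text{-motives}\rangle$ and the reverse containment. For the reverse inclusion, I would observe that if $M$ is an Artin $t$-motive with $(\mf{m},\Phi)$, then by the remark just before the statement it admits a rigid analytic trivialization $\Psi$ whose entries already lie in $K^{\rm sep}[t]$ (up to passing to such a $\Psi$; one needs that an Artin $t$-motive, being trivial after base change to $K^{\rm sep}$, has a trivialization defined over $K^{\rm sep}[t]$ rather than merely over $\mathrm{Frac}\,\C_\infty\{t\}$). Then $\tau(\Psi^{-1})$ has entries in $K^{\rm sep}[t]$ as well, since $\tau$ preserves $K^{\rm sep}$, so evaluating at $t=\theta$ lands any period in $K^{\rm sep}$. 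Thus every period of an Artin $t$-motive lies in $K^{\rm sep}$.

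For the forward inclusion, the strategy is to exhibit, for an arbitrary $\alpha\in K^{\rm sep}$, an explicit Artin $t$-motive one of whose periods is $\alpha$ (or from which $\alpha$ is recovered among the period matrix entries). The natural construction is a rank-one or rank-two ``Artin motive attached to $\alpha$'': take the $K[t]$-module $\bm{1}$ but twist the basis by $\alpha$, i.e. work with $\mf{m} = (m)$ and compare it to the basis $\mf{m}' = \alpha\, m$, which forces the presentation matrix with respect to $\mf{m}'$ to be $\Phi = (\alpha^q/\alpha)$ — still a unit in $K^{\rm sep}$, and after base change to $K^{\rm sep}$ the motive is trivial, so it is Artin. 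Its rigid analytic trivialization can be taken to be the constant matrix $\Psi = (\alpha)\in \mathrm{GL}_1(K^{\rm sep}[t])$: one checks $\tau(\Psi^{\top}) = (\alpha^q) = (\alpha)(\alpha^q/\alpha) = \Psi^{\top}\Phi$. Then $\tau(\Psi^{-1}) = (\alpha^{-q})$... so to land exactly on $\alpha$ one instead sets up the motive so that $\Phi = (\alpha/\alpha^q)$ and $\Psi = (1/\alpha^q)$ — in any case a routine adjustment of exponents produces a constant $1\times 1$ trivialization whose $t=\theta$ evaluation is $\alpha$ itself (or $\alpha^{\pm 1}$, $\alpha^{q}$, etc., all of which generate the same field). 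The point is only that $\alpha$ appears in the period; then, as $\alpha$ ranges over $K^{\rm sep}$, the periods generate $K^{\rm sep}$.

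A cleaner and more structural way to organize the forward inclusion, which I would prefer to present, is to invoke Artin--Schreier/Kummer descent implicitly: $K^{\rm sep}$ is generated over $K$ by roots of separable polynomials, and each such generator $\alpha$ can be packaged into a finite-rank Artin $t$-motive that is the ``motive of $\mathrm{Spec}\,L$'' for $L = K(\alpha)$ — namely $M = L\otimes_K K[t]$ with $\sigma$ acting $q$-semilinearly and fixing a $K$-basis, which becomes $r\bm{1}$ over $K^{\rm sep}$ with $r = [L:K]$. Choosing the basis of $L$ and the dual basis, the trivialization matrix is essentially the matrix $(\sigma_i(\beta_j))_{i,j}$ of Galois conjugates of a basis $(\beta_j)$ of $L$, which is invertible (nonvanishing discriminant, by separability) and has entries in $K^{\rm sep}$; its entries, evaluated at $t=\theta$, are precisely the Galois conjugates $\sigma_i(\beta_j)$, and these generate $L$. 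Letting $L$ run over all finite separable extensions of $K$ then shows the periods generate $K^{\rm sep}$.

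The main obstacle I anticipate is not any single hard estimate but rather the bookkeeping needed to confirm that the objects I write down genuinely satisfy Taelman's definition of an \emph{effective} $t$-motive — in particular the condition that $\det\sigma$ be a unit times a power of $t-\theta$. For the pure ``Artin'' constructions above $\det\Phi$ is a unit in $K^{\rm sep}$ (a power of $t-\theta$ with exponent zero), so this is satisfied, but one must be slightly careful that $\Phi$ has entries in $K[t]$ (not merely $K^{\rm sep}[t]$) and lies in $\mathrm{GL}_r(K(t))$, which is why one works with the descended model $L\otimes_K K[t]$ and only sees $K^{\rm sep}$ after base change. The other point requiring care is convergence: one must check the relevant entry of $\tau(\Psi^{-1})$ ``converges at $t=\theta$'' in the sense demanded — but since $\Psi$ is a \emph{constant} (or polynomial) matrix over $K^{\rm sep}$, $\tau(\Psi^{-1})$ is again constant, so convergence is trivial and evaluation at $t=\theta$ is literal substitution. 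Once these definitional checks are in place, the statement follows immediately from the displayed remark preceding it together with the separable-generation of $K^{\rm sep}$.
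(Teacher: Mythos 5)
Your forward inclusion follows the paper's own route---the motive $M=L\otimes_K K[t]$ of ${\rm Spec}\,L$ with $\sigma$ the $q$-power map on $L$, trivialized by the conjugate matrix $\Psi=(g_i(m_j))_{ij}$, invertible by separability (non-degenerate trace form)---so the construction is the right one, and your first rank-one ``twist by $\alpha$'' is indeed not a $t$-motive over $K$ (in general $\alpha^q/\alpha\notin K$), which you correctly abandon. The one step that is stated incorrectly is the identification of the periods: by the paper's definition they are the entries of $\tau(\Psi^{-1})$ evaluated at $t=\theta$, not the entries $\sigma_i(\beta_j)$ of the trivialization matrix itself. Since $\tau$ raises constants to the $q$-th power, what you actually obtain are $q$-th powers of the entries of $\Psi^{-1}$, and the claim ``these generate $L$'' needs an extra argument: either the paper's identity $\Psi=\tau(\Psi)\Phi^{-1}$, where $\Phi$ has entries in $K$, which shows every entry of $\Psi$ (in particular $\alpha=m_1$) lies in the field generated over $K$ by the periods, or the observation that $K\cdot L^q=L$ because $L/K$ is separable. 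Without such a step the forward inclusion is not closed.

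The reverse inclusion also leaves two points open. First, the remark preceding the proposition goes the other way (a trivialization in ${\rm GL}_r(K^{\rm sep}[t])$ implies Artin); the implication you need---Artin implies the existence of a trivialization in ${\rm GL}_r(K^{\rm sep}[t])$---is what the paper proves by taking $Z\in{\rm GL}_r(K^{\rm sep}[t])$ with $\sigma$ acting by $I_r$ on $\mf{m}Z$, deducing $Z=\Phi\,\tau(Z)$ and setting $\Psi=(Z^{-1})^{\top}$; you flag this but do not carry it out. Second, periods are defined with respect to an arbitrary rigid analytic trivialization in ${\rm GL}_r({\rm Frac}\,\C_\infty\{t\})$, so bounding the periods computed from your chosen $K^{\rm sep}[t]$-trivialization does not yet bound all periods; one needs, as the paper notes, that any two trivializations differ by a matrix in ${\rm GL}_r(\F_q(t))$ (the quotient $\Psi_1\Psi_2^{-1}$ is $\tau$-fixed). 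Both points are short, but they are genuinely needed for the stated equality that the periods generate exactly $K^{\rm sep}$.
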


\begin{proof} 
Let $M$ be an Artin $t$-motive over $K$ with a $K[t]$-basis $\mf{m}$. 
Then, by definition, 
there is some $Z\in {\rm GL}_r(K^{\rm sep}[t])$ such that the matrix presenting the action of $\sigma$ on the $K[t]$-basis $\mf{m}Z$ is $I_r$. 
Hence, for any $\bm{a}=(a_i)_i \in K[t]^r$, we have
 \[\mf{m}Z I_r \tau(\bm{a})=\sigma(\mf{m}Z\bm{a})=\mf{m}\Phi \tau(Z)\tau(\bm{a}),\] 
so $Z=\Phi\, \tau(Z)$, and hence $\tau(Z^{-1})=Z^{-1}\Phi$.
Thus  $\Psi=(Z^{-1})^{\top} \in {\rm GL}_r(K^{\rm sep}[t])$ is a rigid analytic trivialization. If an entry of $\tau(\Psi^{-1})$ converges at $t=\theta$, then the limit value (i.e., a period) is in $K^{\rm sep}$. 
If $\Psi_1$ and $\Psi_2$ are rigid analytic trivializations, 
then $\Psi_1\Psi_2^{-1}\in {\rm GL}_r(\F_q(t))$. 
Thus, all periods of Artin $t$-motives are in $K^{\rm sep}$. 

To prove the converse, let $L/K$ be a finite Galois extension with ${\rm Gal}(L/K)=\{g_1,g_2,\ldots,g_r\}$ and let $\mf{m}=(m_i)_i\in L^r$ be a $K$-basis of $L$. 
Define $\Phi\in {\rm M}_r(K)$ by 
$\tau (\mf{m})=(\tau(m_i))_i=(m_i^q)_i=\mf{m}\Phi$. 
Now let $M=L\otimes_K K[t]$ denote the effective $t$-motive 
such that $\mf{m}$ is a $K[t]$-basis and the action of $\sigma$ on $\mf{m}$ is presented by $\Phi$.
%
Define $\Psi=(g_j(\mf{m}))_j=(g_i(m_j))_{ij} \in {\rm M}_r(L)$. 
For each $i$, we have $\tau(g_i(\mf{m}))=g_i(\tau(\mf{m}))=g_i(\mf{m}\Phi)=
g_i(\mf{m})\Phi$, so $\tau(\Psi)=\Psi\Phi$. 
Since $\Psi^{\top}\Psi$ presents the trace form of $L/K$ with respect to the basis $\mf{m}$, the non-degeneracy of the trace form yields that $\Psi^{\top}\Psi$ is invertible. 
So, $\Psi$ is invertible, and $\Psi^{\top} \in {\rm GL}_r(L)\subset {\rm GL}_r(K^{\rm sep}[t])$ is a rigid analytic trivialization of $(\mf{m},\Phi)$. 
In addition, by $\tau(\Psi)=\Psi\Phi$, we have $\Phi\in {\rm GL}_r(L)$.   
Thus, $M$ is an Artin $t$-motive.  

Now let $a\in K^{\rm sep}$.
If we choose $L/K$ and $\mf{m}$ with $a\in L$ and $m_1=a$, then $a$ is an entry of $\Psi=\tau(\Psi)\Phi^{-1}$. 
Therefore, $a$ is in the extension of $K$ generated by the entries of $\tau(\Psi^{-1})=\tau(\Psi^{-1})_{t=\theta}\in {\rm GL}_r(L)$, which are periods of the Artin $t$-motive $M$. 
\end{proof}

\section{Further remarks}\label{sec.remarks}

\begin{rem} 
As we may expect from the example of the Fibonacci sequence (\Cref{eg.Fn}), 
Rosen--Takeyama--Tasaka--Yamamoto \cite{RosenTakeyamaTasakaYamamoto2024JNT} proved that we may encode the information of prime decompositions in a given extension $L/\Q$ into an element of $\mca{P}^0_\mca{A}$, 
which extends to the scope of the Langlands reciprocity between Galois representations and modular forms. 
%
Given Lafforgue's complete proof of the Langlands correspondence for ${\rm GL}_n$ over function fields around 2000, one may reasonably expect greater progress in $\mca{P}^0_{\mca{A}_K}$. 
\end{rem}


\begin{rem} \label{rem.def(2)}
As suggested from a viewpoint of anabelian geometry, in the condition (2) of the definitions of $\mca{P}^0_\mca{A}$ and $\mca{P}^0_{\mca{A}_K}$ (Theorems \ref{thm.def.A}, \ref{thm.def.AK}), the target $L$ of a map $g \in A(L)$ may be replaced by 
the abelianization ${\rm Gal}(\overline{L}/L)^{\rm ab}$ of the absolute Galois group,
using the conjugate action ${\rm Gal}(L/\Q)\act {\rm Gal}(\overline{L}/L)^{\rm ab}$ and 
the Artin reciprocity isomorphism ${\rm Gal}(\overline{L}/L)^{\rm ab}\congto C_L=I_L/P_L$ to the idele class group. 
\end{rem}

\begin{rem} \label{rem.AT}
We find that the studies of $\mca{P}^0_\mca{A}$ and $\mca{P}^0_{\mca{A}_K}$ come close to the heart of the analogy between knots and primes in arithmetic topology (cf.\,\cite{Morishita2024}), especially in light of the ramification theories and the Chebotarev density theorems. 
For a 3-manifold $M$ endowed with a Chebotarev link $\mca{L}$ (cf.\,\cite{Mazur2025Po, McMullen2013CM, Ueki7}), 
an analogue of the absolute Galois group is defined by 
${\rm Gal}(M,\mca{L})=\varprojlim_{h\in {\rm Cov}(M,\mca{L})}{\rm Gal}\,h
=\varprojlim_{J\subset \mca{L}}\widehat{\pi}_1(M-J)$, 
where $h$ runs through finite branched covers of $M$ branched along a finite sublink of $\mca{L}$, 
$J$ runs through finite sublinks of $\mca{L}$, and 
$\wh{\pi}_1$ denotes the profinite completion of the fundamental group (cf.\,\cite{GropperUekiWang-NU}). 
We have 
a natural isomorphism 
${\rm Gal}(M,\mca{L})^{\rm ab}\congto C_{M,\mca{L}}=I_{M,\mca{L}}/P_{M,\mca{L}}$ to the idele class group (cf.\,\cite{Niibo2014, NiiboUeki2019TAMS}). 
Thus we may start from the condition (2) of the definitions of  $\mca{P}^0_\mca{A}$ and $\mca{P}^0_{\mca{A}_K}$. 
Among other things, finding correct analogues of periods of motives for knots and 3-manifolds in this context would be an important problem. 
\end{rem} 

\bibliographystyle{amsalpha} 
\bibliography{
MatsuzukiSakamotoUeki.PosCharFAN.bbl}

\end{document}